\tikzstyle{every node}=[circle, draw, fill=black!50,
\newcommand{\conv}[1]{\operatorname{conv}#1}
\newtheorem{theorem}{Theorem}%
\newtheorem*{theorem*}{Theorem}
\newtheorem{lemma}{Lemma}%
\newtheorem{claim}[lemma]{Claim}
\newtheorem{definition}[lemma]{Definition}
\newtheorem{cor}[lemma]{Corollary}
\newtheorem{obs}[lemma]{Observation}
\renewcommand{\le}{\leqslant}
\renewcommand{\ge}{\geqslant}
\renewcommand{\geq}{\geqslant}
\def\qed{\ifvmode\mbox{ }\else\unskip\fi\hskip 1em plus 10fill$\Box$}
\def\Ddots{\mathinner{\mkern1mu\raise\p@
\vbox{\kern7\p@\hbox{.}}\mkern2mu
\raise4\p@\hbox{.}\mkern2mu\raise7\p@\hbox{.}\mkern1mu}}
\def\R{\mathbb R}
\def\Z{\mathbb Z}
\def\N{\mathbb N}
\title{Sharp bound for the Erd\H{o}s--Straus non-averaging set problem}
\author{Huy Tuan Pham, Dmitrii Zakharov}
\thanks{Pham's research is supported by a Clay Research Fellowship.}
\address{School of Mathematics, Institute for Advanced Study, Princeton, NJ 08540, USA.}
\email{htpham@caltech.edu}
\thanks{Zakharov's research was supported by the Jane Street Graduate Fellowship.}
\address{Department of Mathematics, Massachusetts Institute of Technology, Cambridge, MA 02139, USA}
\email{zakhdm@mit.edu}
\date{}
\begin{document}

\begin{abstract}
A set of integers $A$ is {\em non-averaging} if there is no element $a$ in $A$ which can be written as an average of a subset of $A$ not containing $a$. We show that the largest non-averaging subset of $\{1, \ldots, n\}$ has size $n^{1/4+o(1)}$, thus solving the Erd{\H o}s--Straus problem. We also determine the largest size of a non-averaging set in a $d$-dimensional box for any fixed $d$. Our main tool includes the structure theorem for the set of subset sums due to Conlon, Fox and the first author, together with a result about the structure of a point set in nearly convex position.
\end{abstract}

\maketitle

\section{Introduction}

A set of integers $A$ is {\em non-averaging} if there is no element $a$ in $A$ which can be written as an average of a nonempty subset of $A$ not containing $a$. Alternatively, a non-averaging set $A$ is a set which avoids solutions in distinct variables to all equations of the form $kx_0=x_1+\dots+x_k$ for $k\ge 2$. The study of non-averaging sets was first initiated by Erd\H{o}s and Straus~\cite{Str,EStr} in the late 1960s, where they asked to determine the size of the largest non-averaging subset of $[n] := \{1, \ldots, n\}$. 

Let $h(n)$ denote the largest size of a non-averaging subset in $[n]$. In his original paper, Straus~\cite{Str} showed that $h(n) \geq e^{c \sqrt{\log n}}$ for some positive constant $c$, while, in a follow-up paper~\cite{EStr}, he and Erd\H{o}s showed that $h(n) = O(n^{2/3})$. The lower bound was improved to a polynomial by Abbott, who first showed~\cite{Abb1} that $h(n) = \Omega(n^{1/10})$ and then improved~\cite{Abb2} this bound to $h(n) = \Omega(n^{1/5})$. The current best lower bound, $h(n) = \Omega(n^{1/4})$, follows from a surprisingly simple construction due to Bosznay~\cite{Bosz}. Indeed, if we fix an integer $q$, then the set of integers consisting of $n_i = i q^3 + i(i+1)/2$ for $i = 1, 2, \dots, q-1$ is a non-averaging subset of $[n]$, where $n = q^4$. One can think of $A$ as a linear projection of the set of integer points on the parabola $S = \{(i, i^2), ~i=1,\ldots,q-1\} \subset [q] \times [q^2]$. The convexity of the parabola implies that $S$ is a non-averaging subset in $\Z^2$ and this property is preserved by an appropriate linear map. Another similar construction achieving the bound $h(n)=\Omega(n^{1/4})$ can be obtained by appropriately defining a set of points in convex position in $\mathbb{Z}^3$ together with a suitable linear map. Viewing a non-averaging set as an integer set avoiding certain linear configurations, this construction is along the line of a number of other important constructions in additive combinatorics based on convexity, such as Behrend's construction.

The study of non-averaging sets, since the work of Erd\H{o}s and Straus \cite{EStr}, has been closely tied to the study of additive structures in the set of subset sums. 
For a set $A$ let $\Sigma (A)$ denote the set of all subset sums of $A$, i.e. 
\[
\Sigma (A) = \left\{ \sum_{a \in B} a, ~ B \subseteq A \right\},
\]
where we include 0 in $\Sigma (A)$ as the sum of the empty subset sum. 
Erd\H os and Straus \cite{EStr} observed that if $A$ is a non-averaging set then for any $a \in A$ and any disjoint $A_1, A_2\subset A\setminus \{a\}$ we have
\begin{equation}\label{eq:sumsets_disjoint}
    \Sigma(A_1 - a) \cap \Sigma(a- A_2) = \{0\}.
\end{equation}
Indeed, suppose that for $B_i\subset A_i$ we have 
\[
\sum_{a_1 \in B_1} (a_1-a) = \sum_{a_2 \in B_2} (a-a_2)\neq 0
\]
then $B_1, B_2$ are non-empty and 
\[
a = \frac{\sum_{a_1\in B_1} a_1 + \sum_{a_2\in B_2} a_2}{|B_1| + |B_2|},
\]
so $A$ is not non-averaging. Note that the converse also holds: if $A$ is not non-averaging then (\ref{eq:sumsets_disjoint}) fails for some $a \in A$ and some disjoint $A_1 ,A_2 \subset A\setminus \{a\}$. 
Define $H(n)$ as the maximum integer for which there are two subsets of $[n]$ of size $H(n)$ whose sets of subset sums have no non-zero common element. The above argument implies that $h(n)\le 2H(n)+2$. Indeed, if $A = \{a_1, \ldots, a_m\}$ is a non-averaging set in $[n]$ then subsets $B_1 = \{a_{[m/2]}- a_1, \ldots, a_{[m/2]}- a_{[m/2]-1}\}$ and $B_2 = \{a_{[m/2]+1} - a_{[m/2]}, \ldots, a_m -a_{[m/2]} \}$ of $[n]$ have sets of subset sums disjoint apart from 0. 
Erd\H os and Straus \cite{EStr} proved that $H(n) = O(n^{2/3})$, which implies the same bound for $h(n)$. In 1990, Erd\H os and S\'ark\"ozy \cite{ES} improved the upper bound to $h(n) \le H(n) \ll (n \log n)^{1/2}$ based on a theorem of Freiman and S\'ark\"ozy on long homogeneous arithmetic progressions in the set of subset sums of subsets of $[n]$ of size at least $C\sqrt{n\log n}$. The key property to note here is that any two sufficiently long homogeneous arithmetic progressions in a bounded interval must intersect. Szemer\'edi and Vu \cite{SV} showed that the set of subset sums of subsets of $[n]$ of size at least $C\sqrt{n\log n}$ contains a long arithmetic progression, albeit without the homogeneity property that is important for the application to non-averaging sets. In \cite{CFP}, Conlon, Fox and the first author obtained a common strengthening of the result of Freiman and S\'ark\"ozy and the result of Szemer\'edi and Vu, and as a corollary obtained an improved upper bound $h(n) \le H(n) = O(\sqrt{n})$. This is a natural barrier for the problem of upper bounding $h(n)$, as we can no longer expect to find long arithmetic progressions among the subset sums for sets of size significantly less than $\sqrt{n}$. In fact, we have $H(n)=\Omega(\sqrt{n})$, since for $c$ sufficiently small, the sets $[c\sqrt{n}]$ and $[n-c\sqrt{n},n]$ have no non-zero common element among their subset sums. Very recently, Conlon, Fox and the first author \cite{conlon2023homogeneous} surpassed this barrier and showed $h(n)\le n^{\sqrt{2}-1+o(1)}$. 

The main result of this paper is the determination, up to lower order factors, of the maximum size of a non-averaging subset $A$ of $[n]$. 

\begin{theorem}\label{thm:main-0}
    Let $A \subseteq [n]$ be a non-averaging set. Then $|A| \le n^{1/4+o(1)}$. In particular, $h(n) = n^{1/4+o(1)}$. 
\end{theorem}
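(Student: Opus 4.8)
The plan is to show that a non-averaging set is, up to an affine change of coordinates and a negligible error, the projection of a convex lattice point set, and then to read off the bound from Andrews' theorem. Suppose $A=\{a_1<\dots<a_m\}\subseteq[n]$ is non-averaging and write $d_i=a_{i+1}-a_i$. The target structural statement is: there is an integer $d\ge 2$ (which we may take bounded, or at worst so slowly growing as to cost only an $n^{o(1)}$ factor), a set $S\subseteq\Z^d$ in convex position lying in a box of volume $V$, and an affine map $\phi\colon\Z^d\to\Z$, such that $A$ agrees with $\phi(S)$ up to $n^{o(1)}$ exceptional elements, and such that the non-averaging property forces $n^{1+o(1)}\ge m^{d-1}V$. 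Granting this, the (robust) Andrews bound gives $m\le V^{(d-1)/(d+1)+o(1)}$, i.e.\ $V\ge m^{(d+1)/(d-1)-o(1)}$, and substituting into $n^{1+o(1)}\ge m^{d-1}V$ yields $m\le n^{(d-1)/(d^2-d+2)+o(1)}$. Since $\max_{d\ge 2}\frac{d-1}{d^2-d+2}=\frac14$, attained exactly at $d\in\{2,3\}$, this gives $|A|\le n^{1/4+o(1)}$; together with Bosznay's construction (whose parabola and its $\Z^3$ analogue realise precisely the extremal cases $d=2$ and $d=3$) it gives $h(n)=n^{1/4+o(1)}$.

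To establish the structural statement I would start from the Erd\H{o}s--Straus observation (\ref{eq:sumsets_disjoint}): for every pivot index $p$, the sets $B_1^{(p)}=\{a_p-a_i:\ i<p\}$ and $B_2^{(p)}=\{a_i-a_p:\ i>p\}$ lie in $[0,n]$ and satisfy $\Sigma(B_1^{(p)})\cap\Sigma(B_2^{(p)})=\{0\}$. Both families are assembled from the single gap sequence $(d_i)$, so as $p$ ranges over $[m]$ these become strong joint constraints on $(d_i)$. Applying the Conlon--Fox--Pham structure theorem for subset sums: unless $B_1^{(p)},B_2^{(p)}$ are already so small that $m\le n^{1/4+o(1)}$, the sets $\Sigma(B_1^{(p)}),\Sigma(B_2^{(p)})$ are, after discarding a few elements, dense subsets of proper generalized arithmetic progressions, and disjointness says that two such structured subsets of $[0,n^{1+o(1)}]$ meet only at $0$. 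The analytic engine is an intersection lemma --- two sufficiently ``fat'' proper GAPs inside an interval must share a nonzero point --- extending the classical fact, quoted above and used by Erd\H{o}s--S\'ark\"ozy, that two long homogeneous arithmetic progressions in a bounded interval intersect. Feeding this back constrains the GAPs and hence the arithmetic of $(d_i)$: after a linear change of coordinates $A$ takes the form ``arithmetic progression of common difference at least $m^{2-o(1)}$, perturbed by a lower-order sequence confined to a short window'', and crucially the perturbation is itself nearly convex. Iterating --- peeling off one scale at a time, each scale contributing a further factor of order $m$ to the ratio between $n$ and the ambient volume --- exhibits $A$ as a projection of a point set in nearly convex position in some $\Z^d$ with the advertised inequality $n^{1+o(1)}\ge m^{d-1}V$.

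The remaining geometric ingredient is a structure theorem for point sets in nearly convex position: a set in $\Z^d$ lying within bounded distance of convex position (equivalently: splitting into $n^{o(1)}$ convex layers, or with all but $n^{o(1)}$ of its points being hull vertices of nearby subsets) inside a box of volume $V$ has at most $V^{(d-1)/(d+1)+o(1)}$ points --- a robust form of Andrews' theorem that tolerates the error terms the structure theorem inevitably introduces. I expect the main obstacle to be the middle step: converting the purely additive information ``$\Sigma(B_1^{(p)})$ and $\Sigma(B_2^{(p)})$ are disjoint off $0$ for every $p$'' into genuine multidimensional convex geometry. The delicate points are (i) handling GAPs of rank larger than one, where the naive ``contains a long homogeneous progression'' argument is too weak and one must use the full strength of the subset-sum structure theorem; (ii) passing from ``dense subset of a GAP'' to an honest convex configuration, i.e.\ absorbing the ``nearly'' in nearly convex; (iii) ensuring the scale-peeling terminates in boundedly many (or at most $n^{o(1)}$) steps and produces exactly the inequality $n^{1+o(1)}\ge m^{d-1}V$; and (iv) making the whole argument uniform over the pivots $p$, so that the GAPs attached to different pivots --- all arising from the same sequence $(d_i)$ --- can be assembled into a single point configuration.
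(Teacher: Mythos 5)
Your high-level picture is the right one, and your numerology is exactly the paper's: starting from $[n]$, one application of the Conlon--Fox--Pham structure theorem (via Lemma~\ref{lem:shrink}) gives an embedding of most of $A$ into a $d$-dimensional box $P$ with $|P|\ll n/m^{d-1+o(1)}$, which is your $n^{1+o(1)}\ge m^{d-1}V$; an Andrews-type bound $m\le V^{(d-1)/(d+1)+o(1)}$ then gives $m\le n^{(d-1)/(d^2-d+2)+o(1)}$, maximized at $1/4$ for $d\in\{2,3\}$. That is also why the paper states the general Theorem~\ref{thm:main} about $d$-dimensional boxes with exponent $\alpha_d=(d-1)/(d+1)$.

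However, there are two genuine gaps in the middle, and they are where essentially all of the paper's work lies.

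First, the ``intersection lemma --- two sufficiently fat proper GAPs in an interval must share a nonzero point'' is not a correct engine. Two proper $d$-dimensional GAPs in $[0,n]$, even of volume comparable to $n$, need not intersect nontrivially: their generators can be incommensurable in ways that make the intersection $\{0\}$ or a sublattice of large index. What actually makes the argument work is that the structure theorem gives a \emph{single} ambient GAP $P(A)$ inside which $A$, $A_1-a$, $a-A_2$ all embed compatibly (Lemma~\ref{lem:gap-grid}), and one then shows via the zonotope sampling lemma (Lemma~\ref{lem:sampling}) and Lemma~\ref{lem:zonotope} that $\Sigma(A_1-a)$ and $\Sigma(a-A_2)$ both contain a full lattice slab around the \emph{same} point $z_1=z_2$ in that common coordinate system --- this coincidence is what forces the intersection, and it is precisely where the failure of $\mu$-convex position is used (Theorem~\ref{thm:int-sums}). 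Without arranging this common embedding and using the zonotope structure, disjointness of two abstract GAPs gives you nothing.

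Second, the ``robust Andrews theorem'' you invoke (a set within bounded distance of convex position, or with $n^{o(1)}$ convex layers, in a volume-$V$ box has $\le V^{(d-1)/(d+1)+o(1)}$ points) is not the form of convexity that the argument produces, and the two notions are quantitatively far apart. What one actually extracts from the non-averaging condition is $\delta$-convex position --- for every $a\in A$ some half-space through $a$ contains at most $\delta|A|$ points of $A$ --- and the $\delta$ you can afford is only $(\log\log|A|)^{-\kappa}$, not $o(1)$ in any polynomial sense. Such a set can have vastly more than $n^{o(1)}$ convex layers, so the bound you want does not follow in one shot. The paper's Lemma~\ref{lem:convex1} is instead a single \emph{density increment} step: given $\delta$-convex position, one finds a convex sub-body $\Omega'$ where the density of $A$ increases by a definite power. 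This must then be iterated, re-applying the structure theorem at each scale, and the dimension can go both up and down between iterations (handled by Lemma~\ref{lem:reduce} and the three cases in the proof of Theorem~\ref{thm:main}); the ``scale-peeling'' you describe does not account for the possibility of the subset-sum dimension dropping, nor for the need to restore irreducibility after each restriction so that Theorem~\ref{thm:int-sums} can be re-applied. Making these steps precise --- especially quantifying the trade-off so that each of the finitely many iterations costs only an $n^{o(1)}$ factor --- is the bulk of the argument, and your sketch does not yet address it.
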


The sharp bound obtained in Theorem \ref{thm:main-0} is fairly surprising in that the constructions yielding the lower bound arise from point sets in convex positions, which are unstructured. In fact, the good competing lower bound constructions can take on significantly different shapes, being either projections from suitable $2$-dimensional or $3$-dimensional point sets.

The key result behind the improved bound on $h(n)$ in \cite{conlon2023homogeneous}, which will also be the crucial tool for us, is an essentially optimal inverse theorem for subset sums. %
Roughly speaking, the main result of \cite{conlon2023homogeneous} (for the precise statement, see Theorem \ref{thm:struc} below) says that for $A\subseteq [n]$ of size at least $n^{1/\beta}$ for some $\beta>1$ and $s$ slightly smaller than $|A|$, one can remove few elements of $A$ upon which $A$ is contained in a generalized arithmetic progression (GAP) $P = P(s; A)$ of bounded dimension and $\Sigma(A')$ contains a proper homogeneous translate of $(cs) P$ for an appropriate $A'\subseteq A$ of size $s$. On the other hand, $\Sigma(A')$ is clearly contained in $s P$. Here, a generalized arithmetic progression of dimension $d$ is a set of the form $P=\{x+\sum_{i=1}^{d}n_iq_i\,:\, 0 \le n_i \le w_i-1\}$ and $P$ is said to be homogeneous if $x$ is an integer linear combination of $q_1,\dots,q_d$. In particular, essentially, up to a constant $c$, we can encapsulate an arbitrary set $A$ by the much simpler GAP $P$ for which the structure of $\Sigma(A)$ is captured by the correct multiple of $P$.

In \cite{conlon2023homogeneous}, the structure theorem is used to show that, for $|A|$ exceeding $n^{\sqrt{2}-1+o(1)}$, if $\Sigma(A)$ does not contain a long homogeneous progression, then $A$ can be embedded in a small $2$-dimensional GAP. This structure is then used to find a one-dimensional fiber of the $2$-dimensional GAP which has too large an intersection with $A$ to be non-averaging.

Recalling Bosznay's construction, one can observe that the non-averaging set constructed indeed has a $2$-dimensional (or $3$-dimensional) structure. On the other hand, the argument of \cite{conlon2023homogeneous} does not capture tightly the structure of the construction. In fact, for obtaining an upper bound closer to the lower bound by Bosznay's construction, one must go beyond using only homogeneous progressions. 

We next turn to an informal sketch of the argument behind our proof of Theorem \ref{thm:main-0}. The crux of our argument shows that via sufficiently good understanding of the additive properties of a large non-averaging subset of $[n]$, such a set must resemble the projection of a point set in convex arrangement in a higher dimension. This roughly matches the key insight behind Bosznay's construction, and in particular, our argument mirrors the important structure behind the construction. 

Revisiting the argument of Erd\H{o}s and Straus, for a partition $A=\{a\}\cup A_1\cup A_2$ with $|A_1|\approx |A_2|$, we wish to show that $\Sigma(A_1-a) \cap \Sigma(a-A_2) \ne \emptyset$. We then apply the structure theorem, Theorem \ref{thm:struc}, to the sets $A$, $A_1-a$ and $a-A_2$. For an illustration, suppose for the moment that $P(s; A)$ has dimension $2$, and in particular, we have an embedding of $A$ into a box in $\mathbb{Z}^2$. We henceforth view $A$ as a subset of $\mathbb{Z}^2$ via this embedding. By careful preprocessing of the set $A$, we can assume that $P(s_1;A_1)$ and $P(s_2;A_2)$ have dimension $2$ and interact nicely with $P(s;A)$. Notice that $(cs_1)P(s_1;A_1)$ and $(cs_2)P(s_2; A_2)$, for $s_1,s_2$ sufficiently large, would intersect as long as their convex hulls intersect. Indeed, the key insight behind Bosznay's construction is that the convex hull of $A_1-a$ and $a-A_2$ are disjoint for all partitions $A=\{a\}\cup A_1\cup A_2$. On the other hand, requiring the convex hull of $A_1-a$ and $a-A_2$ to be disjoint for all partitions $A=\{a\}\cup A_1\cup A_2$ is equivalent to requiring that the points in $A$ are in convex position. In such situation, an upper bound on the size of $A$ can be easily derived (using e.g. Andrew's theorem on lattice polytopes \cite{Andrews}). 

However, there are several significant complications to the sketch above. In particular, applications of the structure theorem require removal of elements of $A$, and furthermore, the GAPs guaranteed to be contained in $\Sigma(A_1-a)$ or $\Sigma(a-A_2)$ are not $(cs_1)P(s_1;A_1)$ and $(cs_2)P(s_2; A_2)$ but certain translates of these GAPs. It turns out that, under these considerations, disjointness of $\Sigma(A_1-a)$ and $\Sigma(a-A_2)$ for all partitions $A=\{a\}\cup A_1\cup A_2$ translates to a noisy version of convexity of points in $A$, where for every point $a\in A$, there is a line through $a$ that is nearly a separating line in that at most $\delta |A|$ points of $A$ lie on one side of the line. We say that $A$ is in $\delta$-convex position if the above condition holds. 

Our strategy towards Theorem \ref{thm:main-0} is then a density increment argument. The key convex geometry lemma behind the density increment is that, given a discrete set $A$ inside a convex domain $\Omega$ in $\delta$-convex position, we can find a smaller convex domain $\Omega'$ for which the density of $A\cap \Omega'$ in $\Omega'$ increases significantly. 

Along the iteration, in applying the structure theorem, we do not have control over the dimension of the GAP $P(s;A)$, and hence along the proof, it is in fact more convenient for us to prove a more general result characterizing the size of the maximum subset of the $d$-dimensional box $[n]^d$. The definition of a non-averaging set generalizes readily to higher dimensions, i.e. a subset $A\subset \mathbb{Z}^d$ is \emph{non-averaging} if there is no element $a$ in $A$ which can be written as an average of a subset $A'$ not containing $a$. 

\begin{theorem}\label{thm:main}
    Let $A \subseteq B$ be a non-averaging set of a $d$-dimensional axis-aligned box $B$. Then $|A| \le |B|^{\alpha_d+o(1)}$ where $\alpha_d = (d-1)/(d+1)$ for $d\ge 2$ and $\alpha_1 = 1/4$. Here the $o(1)$ tends to 0 as $d$ is fixed and $n$ tends to infinity.
\end{theorem}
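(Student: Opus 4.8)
The plan is to run a density increment on the ``convex position'' structure, using the subset-sum structure theorem (Theorem~\ref{thm:struc}) as the engine that converts the non-averaging hypothesis into a geometric near-convexity statement. First I would reduce the general $d$-dimensional box problem to a normalized setting: given a non-averaging $A\subseteq B$ with $|A|$ larger than the claimed bound, I split $A=\{a\}\cup A_1\cup A_2$ with $|A_1|\approx|A_2|\approx |A|/2$ and recall the Erd\H{o}s--Straus identity~\eqref{eq:sumsets_disjoint}: non-averaging forces $\Sigma(A_1-a)\cap\Sigma(a-A_2)=\{0\}$ for \emph{every} such split and every $a$. Applying Theorem~\ref{thm:struc} to $A$, to each $A_1-a$, and to each $a-A_2$ (with $s,s_1,s_2$ chosen just below the relevant cardinalities), after discarding a small exceptional subset we may assume $A$ sits inside a bounded-dimension GAP $P=P(s;A)$, which we view as an affine image of a box in $\mathbb Z^{d'}$ for some $d'$ we do not control a priori (hence the need to phase everything through arbitrary dimension). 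The structure theorem guarantees $\Sigma(A_1-a)$ contains a homogeneous translate of $(cs_1)P(s_1;A_1)$ and likewise for $a-A_2$; since a dilate of a GAP by a large factor fills out its convex hull up to boundary effects, disjointness of these two sets forces the convex hulls of (most of) $A_1-a$ and $a-A_2$ to be essentially disjoint. Quantifying the error terms coming from element removal and from the translates (rather than honest dilates of $P(s_i;A_i)$), this becomes: for every $a\in A$ there is a hyperplane through $a$ with at most $\delta|A|$ points of $A$ strictly on one side — i.e. $A$ is in $\delta$-convex position, for a $\delta$ that we can take as small as we like at the cost of worse lower-order factors.

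Next I would prove the key convex geometry lemma: if a finite set $A$ lies in a convex body $\Omega\subseteq\mathbb R^{d'}$ and is in $\delta$-convex position, then there is a convex subdomain $\Omega'\subseteq\Omega$ with $\mathrm{vol}(\Omega')\le (1-\eta)\,\mathrm{vol}(\Omega)$ such that the density $|A\cap\Omega'|/\mathrm{vol}(\Omega')$ is a definite factor larger than $|A|/\mathrm{vol}(\Omega)$. The intuition is exactly Bosznay's: a genuinely convex-position set in a box of volume $V$ has only $O(V^{(d'-1)/(d'+1)})$ points by Andrews' theorem \cite{Andrews}, so a set that is \emph{too dense} to be in convex position must have most of its mass concentrated in a region small enough to exhibit this. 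Concretely, I would iterate peeling of convex layers (the ``onion''): the $\delta$-convexity condition says each point is within reach of a near-supporting hyperplane, so the outer layers are sparse relative to the volume they occupy, forcing a density jump when we pass to the convex hull of what remains after removing a controlled number of outer layers; alternatively one can bisect $\Omega$ by a hyperplane and argue one side gains density. Either way the output is a new convex domain, possibly in lower effective dimension, together with a subset of $A$ of size a constant fraction of $|A|$, still non-averaging, still in $\delta$-convex position (this is hereditary under passing to subsets and to subdomains).

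The iteration then runs as follows: start with $A\subseteq B=\Omega_0$, where after the first reduction $\mathrm{vol}(\Omega_0)\le |B|$; repeatedly apply the geometry lemma to get $\Omega_0\supseteq\Omega_1\supseteq\cdots$ with volumes shrinking geometrically and densities increasing geometrically, maintaining a non-averaging $\delta$-convex subset of $A$ of size at least (roughly) $|A|/\mathrm{polylog}$ after $O(\log|B|)$ steps. Since the density is trivially bounded above by $1$ (the set is a subset of lattice points), after $O_d(\log|B|)$ steps we reach a domain $\Omega_t$ of volume $\mathrm{vol}(\Omega_t)\gtrsim 1$ containing $\gtrsim |A|/|B|^{o(1)}$ points; combined with the geometric decay $\mathrm{vol}(\Omega_t)\le (1-\eta)^t\,|B|$ and the density bound, tracking the exponents gives $|A|\le |B|^{\alpha_d+o(1)}$, where the exponent $\alpha_d=(d-1)/(d+1)$ emerges from the exponent in Andrews' bound and $\alpha_1=1/4$ from the $d=2$ case after projection (matching the $(d'-1)/(d'+1)$ with $d'=2$ and the extra factor of $\tfrac12$ from projecting a $2$-dimensional convex-position set down to $\mathbb Z$ as in Bosznay's construction). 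Throughout, one must re-apply Theorem~\ref{thm:struc} at each stage to re-certify $\delta$-convex position for the shrunken set, since the hypothesis is about the \emph{current} non-averaging set, not a frozen one; propagating the $o(1)$ losses through $O(\log|B|)$ rounds of the structure theorem while keeping them $o(1)$ is the main technical bookkeeping.

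The step I expect to be the main obstacle is making the passage ``$\Sigma(A_1-a)\cap\Sigma(a-A_2)=\{0\}$ for all splits'' $\Longrightarrow$ ``$A$ is in $\delta$-convex position'' genuinely quantitative and robust: the structure theorem only controls $\Sigma(A_i-a)$ up to removal of a small exceptional set and only up to a homogeneous \emph{translate} of a dilated GAP, so a naive convex-hull argument would lose control of exactly which points are being separated and on which side the translate sits. One has to choose the splits adaptively (e.g. sorting $A$ along many directions and cutting), control the translate directions uniformly over $a$, and argue that the unavoidable $\delta|A|$ ``bad'' points on the wrong side of each near-supporting hyperplane do not accumulate across the $O(\log|B|)$ iterations. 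A secondary obstacle is that the effective dimension $d'$ of $P(s;A)$ may exceed $d$, so the whole argument — including the geometry lemma and the Andrews-type bound — must be run in this a priori larger dimension, and one has to check that the resulting exponent $(d'-1)/(d'+1)$, after accounting for the embedding of $A$ into $[n]^d$, never beats $(d-1)/(d+1)$; this is where one uses that a low-volume GAP of large dimension embedded in $[n]^d$ cannot actually have more points than the $d$-dimensional bound allows.
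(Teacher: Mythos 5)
Your high-level plan matches the paper: use the Erd\H{o}s--Straus identity plus the structure theorem to convert non-averaging into $\delta$-convex position, then run a density increment driven by a convex geometry lemma, keeping careful track of the embedding dimension. But there are two genuine gaps, and they are precisely the two load-bearing technical pieces.

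First, your density increment lemma is too weak and your sketched proofs of it would not close the argument. You assert a lemma of the form ``volume shrinks by a factor $(1-\eta)$ and density rises by a definite factor,'' and suggest proving it by onion peeling or bisection. The paper's Lemma~\ref{lem:convex1} is much sharper: for some scale $\eta\in[\delta,\delta^\tau]$ there is a convex $\Omega'\subset\Omega$ with $\mathrm{Vol}(\Omega')\le\eta\,\mathrm{Vol}(\Omega)$ and $|A\cap\Omega'|\ge\eta^{\frac{d-1}{d+1}+\varepsilon}|A|$. The exponent $\frac{d-1}{d+1}+\varepsilon$ is exactly where $\alpha_d$ comes from, and it is what makes the iteration terminate with the claimed bound rather than with an unquantified ``contradiction after $O(\log|B|)$ rounds.'' The paper proves this via a dyadic-box reduction, a random rotation to exhibit the near-convex boundary as a bounded convex graph, and a linear-approximation lemma (Lemma~\ref{lem:convex-lin}) to find a cap in which the boundary is nearly flat and hence the selected $\Omega'$ is thin. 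Onion peeling does not obviously give the $\eta^{(d-1)/(d+1)+\varepsilon}$ tradeoff (the $\delta$-convex position hypothesis is per-point, not a statement about the sparsity of outer layers), and bisecting by a single hyperplane cannot produce the right scaling in general. You would need to supply the cap construction or an equivalent.

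Second, your treatment of the dimension change is hand-wavy in a way that hides real difficulty. You note that the GAP $P(s;A)$ may have dimension $d'\neq d$ and say ``one has to check that the resulting exponent never beats $(d-1)/(d+1)$,'' but this is not a triviality. The paper introduces $(\delta,\gamma)$-irreducibility and proves Lemma~\ref{lem:reduce}, which iteratively applies down-, up-, and shrink-moves to stabilize the dimension, and then the numerical Observation~\ref{obs:numerics} tracks precisely how the exponent $\alpha$ must improve under a dimension jump; the critical tight case is $d=1\mapsto\hat d=2$, which is where $\alpha_1=1/4$ emerges (from $\alpha_2(\frac{1}{\alpha_1}-1)=1$), not from a ``factor of $\tfrac12$ when projecting the $2$-dimensional construction to $\mathbb Z$'' as you suggest. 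The paper also needs to bound the number of dimension-changing steps and control the total shrinkage of $|A|$ over the iteration (and this is why the shrink-move and the bound $|P_i|\ll\rho^K|B|$ appear). Without this bookkeeping, re-applying the structure theorem ``$O(\log|B|)$ times'' would leak far more than $o(1)$ in the exponent. A secondary inaccuracy: passing from $\Sigma(A_1-a)\cap\Sigma(a-A_2)=\{0\}$ to $\delta$-convex position requires the zonotope-filling argument of Lemma~\ref{lem:sampling} and Lemma~\ref{lem:zonotope} (to show $\Sigma(A_i-a)$ contains a whole dilated copy of the box around the appropriate center), which is considerably more involved than ``a dilate of a GAP fills out its convex hull up to boundary effects.''
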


We note that the bound is tight for $d\ge 2$ (up to the $o(1)$ term in the exponent), by considering points arranged on a paraboloid in $d$ dimensions. The determination of the exponents $\alpha_d$ is obtained through various constraints arising in the density increment iteration. It is also here that one can pin down the dimensions supporting tight constructions for the non-averaging set problem in $1$-dimension. 

As illustrated through the sketch, our proof of Theorem \ref{thm:main-0} brings forth explicitly the role of convexity, and mirrors the structure of the construction giving the matching lower bound. Constructions based on convexity for sets avoiding certain linear configurations, which in particular include non-averaging sets, are common in additive combinatorics. On the other hand, this is the first case in which our structural understanding is sufficiently good to bring forth the underlying convex structure. We hope that some ideas from the present paper can be useful in broader contexts for similar problems. We also think that a more precise version of Theorem \ref{thm:main-0}, or stability and precise characterization of the structure of maximum non-averaging sets, would be interesting for further investigation.

$\qquad $

\noindent {\em Structure of the paper.} In Section \ref{section:convex_lemma}, we introduce and prove our main convex geometry lemma on sets in $\delta$-convex position. In Section \ref{section:structure_theorem}, we introduce the structure theorem, the preprocessing of the set $A$, and relate appropriate embeddings of a non-averaging set with a set in $\delta$-convex position. In Section \ref{section:non-averaging}, we then finish the proof of Theorem \ref{thm:main} and Theorem \ref{thm:main-0} by executing the density increment. 

$\qquad$

\noindent {\em Notations.} We use standard asymptotic notation. We write $a \ll b$ or $a=O(b)$ if $a \le C b$ for some constant $C>0$. A subscript $\ll_\beta$ or $O_\beta$ means that the implied constant $C$ depends on $\beta$. For clarity, we omit floor and ceiling signs where not essential.

\section{Convex geometry lemma}\label{section:convex_lemma}

We say that a finite set $A \subset \R^d$ is in {\em $\delta$-convex position} if for any $a \in A$ there is a half-space $H^+$ containing $a$ and such that $|A \cap H^+| \le \delta |A|$.

The goal of this section is to establish the following result.
\begin{lemma}\label{lem:convex1}
    For any $d \ge 1$ and $\varepsilon >0$ there are $\tau \in (0,1)$ and $\delta_0 >0$ such that for any $\delta \in (0,\delta_0)$ the following holds.
    Let $\Omega \subset \R^d$ be a convex body with non-empty interior and let $A \subset \Omega$ be a set in $\delta$-convex position. Assume that $|A|$ is sufficiently large in $\delta$. Then there exists some $\eta \in [\delta, \delta^\tau]$ and a convex subset $\Omega' \subset \Omega$ such that $\mathrm{Vol}_d(\Omega') \le \eta \mathrm{Vol}_d(\Omega)$ and $|A \cap \Omega'| \ge \eta^{\frac{d-1}{d+1}+\varepsilon}|A|$.
\end{lemma}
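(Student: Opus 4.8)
The plan is to exploit the $\delta$-convex position hypothesis to find a direction along which $A$ is "thin" near the boundary of $\Omega$, peel off a thin convex slab, and show that inside a suitable sub-body $\Omega'$ the density of $A$ jumps up. Concretely, I would first reduce to the case where $\Omega$ is a box (or a simplex) by a John-type affine transformation: any convex body $\Omega$ contains an ellipsoid $E$ with $\Omega \subseteq d\cdot E$, and an affine map preserves $\delta$-convex position, volume ratios, and the quantity $|A\cap\Omega'|/|A|$, so up to constants depending only on $d$ we may assume $\Omega = [0,1]^d$. Fix a small parameter $\eta$ to be optimized in the range $[\delta,\delta^\tau]$; the target inequality to beat is $|A\cap\Omega'| \ge \eta^{(d-1)/(d+1)+\varepsilon}|A|$ for a convex $\Omega'$ with $\mathrm{Vol}_d(\Omega')\le \eta\,\mathrm{Vol}_d(\Omega)$.

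**The core dichotomy.** The main idea is a bisection/density-increment dichotomy on the box. Split $[0,1]^d$ into two halves by a hyperplane orthogonal to some coordinate axis. One of the two halves, call it $\Omega_1$, contains at least half of $A$; it has volume $\tfrac12\mathrm{Vol}_d(\Omega)$. Iterating this $k$ times along cleverly chosen axes, we get a sub-box $\Omega_k$ of volume $2^{-k}$ containing at least a $2^{-k}\cdot(\text{something})$-fraction? — no, that is not enough by itself. The point where $\delta$-convex position enters: if $A$ were genuinely spread out, then for a typical point $a$ on the "outer shell" of a sub-box, the separating half-space $H^+$ guaranteed by $\delta$-convexity would have to capture a large fraction of $A$ (since a half-space cutting off a corner of a box whose opposite region carries most of $A$ is forced to contain many points), contradicting $|A\cap H^+|\le\delta|A|$. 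Quantitatively, I would argue: partition $\Omega$ into roughly $\eta^{-1}$ congruent sub-boxes each of volume $\eta$; if every one of them contained at most $\eta^{(d-1)/(d+1)+\varepsilon}|A|$ points we would need to control the total, and the exponent $(d-1)/(d+1)$ should emerge from balancing the number of boundary sub-boxes ($\sim\eta^{-(d-1)/d}$ of them along a face) against the constraint that points near each boundary face must be sparse because of the near-separating hyperplanes. This is essentially a discrete isoperimetric/shadow estimate: a $\delta$-convex set in a box, having few points cut off by half-spaces near every point, must concentrate in the interior, and the interior scale at which concentration occurs is governed by $\eta^{(d-1)/(d+1)}$.

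**Making the exponent appear.** To pin down $(d-1)/(d+1)$, here is the heuristic I would make rigorous. By $\delta$-convexity, $A$ lies (up to $\delta|A|$ exceptional points per direction) on the boundary of its own convex hull $K = \conv{A}$, i.e. $A$ is in nearly-convex position; by Andrews' theorem a genuinely convex lattice-like set in a region of volume $V$ has size $O(V^{1/3})$ in dimension $2$ and more generally the number of vertices of a lattice polytope of volume $V$ is $O(V^{(d-1)/(d+1)})$. So, morally, $|A\cap\Omega'|\lesssim \mathrm{Vol}_d(\Omega')^{(d-1)/(d+1)} = (\eta\,\mathrm{Vol}_d(\Omega))^{(d-1)/(d+1)}$ for the convex-position part, while globally $|A|\lesssim \mathrm{Vol}_d(\Omega)^{(d-1)/(d+1)}$, so the ratio is $\eta^{(d-1)/(d+1)}$ — and we want to beat this by $\eta^{\varepsilon}$, i.e. find a sub-body where the density is anomalously large. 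The density-increment form of Andrews' theorem is: if $A$ is in $\delta$-convex position in $\Omega$ and $|A|$ is much larger than $\mathrm{Vol}_d(\Omega)^{(d-1)/(d+1)}$ would allow in the truly convex case (which must happen once $|A|$ is a sufficiently large power — and $\delta$ small makes $\delta$-convex behave like convex at the relevant scale), then some sub-region of volume $\eta\mathrm{Vol}_d(\Omega)$ must carry more than its "fair share" $\eta^{(d-1)/(d+1)}|A|$ of points, which after absorbing a loss gives the $+\varepsilon$. I would run this via a covering of $\partial K$ by caps/slabs of width $\sim\eta^{1/(d+1)}$ in the normal direction and $\sim\eta^{1/(d+1)}$? — in fact the right cap has "height" $t$ and "width" $\sim t^{1/2}$ balancing to volume $\eta$, with the number of disjoint such caps on the boundary being $\sim \eta^{-(d-1)/(d+1)}$, so by pigeonhole one cap — our $\Omega'$, taken as the convex hull of that cap — contains $\gtrsim \eta^{(d-1)/(d+1)}|A|$ points, and the extra $\eta^\varepsilon$ slack comes from choosing $\eta$ in the window $[\delta,\delta^\tau]$ by an averaging/pigeonhole over dyadic scales so that at least one scale $\eta$ witnesses a genuine increment (if no scale did, telescoping the scales from $\delta$ up to $\delta^\tau$ would contradict the trivial bound $|A\cap\Omega|\le|A|$).

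**The main obstacle.** The delicate point — and where I expect the real work to be — is controlling the interaction between the noise parameter $\delta$ and the scale $\eta$: a half-space cutting off $\le\delta|A|$ points globally says nothing directly at the local scale $\eta$, so one must carefully track, when passing to a sub-body $\Omega'$, that $A\cap\Omega'$ remains in $\delta'$-convex position for some controlled $\delta'$ (perhaps $\delta' = \delta/(\text{local density})$, which could be large!), or else argue that the exceptional points — those $a$ whose near-separating half-space is "fake" at scale $\eta$ — are few enough to discard. Handling these exceptional points, and ensuring the cap-covering pigeonhole still goes through after their removal while the quantitative exponent $(d-1)/(d+1)+\varepsilon$ is preserved, is the crux; the window $\eta\in[\delta,\delta^\tau]$ with $\tau<1$ is exactly the room needed to absorb these losses, and choosing $\tau$ small enough (depending on $d,\varepsilon$) should make it work.
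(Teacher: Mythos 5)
Your geometric picture is essentially correct and matches the paper's: pass to a box by an affine map, observe that $\delta$-convexity pushes $A$ onto a near-convex surface, and look for an economic cap of volume $\eta\,\mathrm{Vol}(\Omega)$ whose base has $(d{-}1)$-measure $\sim\eta^{(d-1)/(d+1)}$ and hence captures a $\gtrsim\eta^{(d-1)/(d+1)}$ fraction of $A$ by pigeonhole over $\sim\eta^{-(d-1)/(d+1)}$ caps, with the window $\eta\in[\delta,\delta^\tau]$ supplying room for a pigeonhole over scales. The numerology you extract is also right. But the proposal does not contain the two mechanisms that make this rigorous, and you correctly flag one of them as ``the main obstacle'' without resolving it. First, you never actually show that most of $A$ clusters on a thin convex surface: the paper does this by tiling $\Omega$ with dyadic $r$-boxes at scale $r\sim\delta^{1/d}$, pigeonholing to a dyadic level $\mu$ of occupied boxes, and then using $\delta$-convexity (together with the fact that each selected box carries $>\delta|A|$ points) to prove the selected boxes poke out of each other's convex hull. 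A ``bisection'' of $\Omega$, which you float as the core dichotomy, does not by itself produce this structure. Second, and more decisively, your cap-covering pigeonhole is not a valid step as stated: an economic cap covering in the sense of B\'ar\'any--Larman exists for a genuinely convex body, but here the caps must be constructed adaptively on the nearly-convex hull $P$ of the occupied boxes, and one needs a concrete sub-region over which the boundary is close to a hyperplane. That is exactly what the paper's Lemma 3 (the discrete linear-approximation lemma for bounded convex functions) provides: after a random rotation, $\partial P$ is a concave graph $h$ over a small $(d{-}1)$-box, one pigeonholes over sub-boxes to a dyadic level $K$, and Lemma 3 locates a sub-box on which $h$ is $O(K\log(1/\delta)/m^2)$-close to linear, which is precisely the cap of the right height and width. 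Without this lemma (or an equivalent quantitative substitute), you have no way to exhibit the convex $\Omega'$ on which the increment occurs, and the exponent bookkeeping cannot be closed. In short: right heuristic, but the essential technical engine is missing.
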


To see where the numerology comes from, consider the case when $A$ is uniformly distributed on the surface of a unit sphere $S^{d-1}$. Taking $\Omega'$ to be the convex hull of a spherical cap of radius $\sim r$ we see that $\mathrm{Vol}(\Omega') \sim r^{d+1}$ and $|\Omega' \cap A| \sim r^{d-1} |A|$. So Lemma \ref{lem:convex1} asserts that this is essentially the worst case. See \cite{Andrews, Barany} for related results.

Roughly speaking, our strategy for proving Lemma \ref{lem:convex1} is to first identify a dyadic scale of boxes in which we can pass to a relatively large subset of $A$ whose intersection with the boxes, if nonempty, has similar sizes. As long as the intersection of each box and $A$ is sufficiently large, by the assumption that $A$ is in $\delta$-convex position, we obtain that the boxes $B$ with nonempty intersection with $A$ are roughly in convex position, in that each box $B$ when enlarged by a constant factor must intersect with the convex hull $P$ of the boxes. Upon a random rotation and passing to a further subset, we can assume without loss of generality that points of $A$, the relevant boxes $B$ and their convex hull $P$ are represented as a function over a box $[-u,u]^{d-1}\subseteq \mathbb{R}^{d-1}$. Let $h$ denote the function representing $P$. %
We then show that we can find a further subbox $Q'$ of $Q$ of widths $u/m$ for which $h$ is well-approximated by a linear function $\ell$ (Lemma \ref{lem:convex-lin}). Furthermore, we can guarantee that $Q'$ contains the projection of many points in $A$. The domain $\Omega'$ is then taken to be the set of points $(v,h')$ where $v\in Q'$ and $h'$ is close to the linear approximation $\ell$ of $h$ over $Q'$. 

For a figurative representation of the proof plan, see Figure \ref{fig}. 

\begin{figure}
\tikzset{every picture/.style={line width=0.75pt}} 

\begin{tikzpicture}[x=0.75pt,y=0.75pt,yscale=-1,xscale=1]

\draw  [color={rgb, 255:red, 208; green, 2; blue, 27 }  ,draw opacity=1 ] (224,164) .. controls (232.99,159.5) and (257.07,152.17) .. (285.95,145.75) .. controls (314.82,139.34) and (293.82,144.04) .. (297.85,143.23) .. controls (301.87,142.42) and (376.95,131.11) .. (398,139) .. controls (419.05,146.89) and (524.32,192.15) .. (543,238) .. controls (561.68,283.85) and (589.48,344.69) .. (590,364) .. controls (590.52,383.31) and (83,401) .. (63,371) .. controls (43,341) and (78.57,279.78) .. (114,238) .. controls (149.43,196.22) and (215.01,168.5) .. (224,164) -- cycle ;
\draw   (68,313) -- (86,313) -- (86,331) -- (68,331) -- cycle ;
\draw   (322,144) -- (340,144) -- (340,162) -- (322,162) -- cycle ;
\draw   (293,148) -- (311,148) -- (311,166) -- (293,166) -- cycle ;
\draw   (262,153) -- (280,153) -- (280,171) -- (262,171) -- cycle ;
\draw   (224,164) -- (242,164) -- (242,182) -- (224,182) -- cycle ;
\draw   (197,181) -- (215,181) -- (215,199) -- (197,199) -- cycle ;
\draw   (165,197) -- (183,197) -- (183,215) -- (165,215) -- cycle ;
\draw   (131,225) -- (149,225) -- (149,243) -- (131,243) -- cycle ;
\draw   (108,254) -- (126,254) -- (126,272) -- (108,272) -- cycle ;
\draw   (89,282) -- (107,282) -- (107,300) -- (89,300) -- cycle ;
\draw   (564,340) -- (582,340) -- (582,358) -- (564,358) -- cycle ;
\draw   (549,300) -- (567,300) -- (567,318) -- (549,318) -- cycle ;
\draw   (514,221) -- (532,221) -- (532,239) -- (514,239) -- cycle ;
\draw   (460,179) -- (478,179) -- (478,197) -- (460,197) -- cycle ;
\draw   (429,164) -- (447,164) -- (447,182) -- (429,182) -- cycle ;
\draw   (405,149) -- (423,149) -- (423,167) -- (405,167) -- cycle ;
\draw   (380,139) -- (398,139) -- (398,157) -- (380,157) -- cycle ;
\draw   (350,139) -- (368,139) -- (368,157) -- (350,157) -- cycle ;
\draw    (193,52) -- (194,409) ;
\draw    (342,51) -- (343,408) ;
\draw    (483,52) -- (484,409) ;
\draw [color={rgb, 255:red, 144; green, 19; blue, 254 }  ,draw opacity=1 ][line width=2.25]    (193,175) -- (342,133) ;
\draw    (41,360) -- (639,361.99) ;
\draw [shift={(641,362)}, rotate = 180.19] [color={rgb, 255:red, 0; green, 0; blue, 0 }  ][line width=0.75]    (10.93,-3.29) .. controls (6.95,-1.4) and (3.31,-0.3) .. (0,0) .. controls (3.31,0.3) and (6.95,1.4) .. (10.93,3.29)   ;
\draw [color={rgb, 255:red, 126; green, 211; blue, 33 }  ,draw opacity=1 ][line width=2.25]    (192,211) -- (341,169) ;
\draw [color={rgb, 255:red, 126; green, 211; blue, 33 }  ,draw opacity=1 ][line width=2.25]    (193,139) -- (342,97) ;

\draw (583,312.4) node [anchor=north west, draw=none, fill=none][inner sep=0.75pt]    {$g( y)$};
\draw (634,370.4) node [anchor=north west, draw=none, fill=none][inner sep=0.75pt]    {$y$};
\draw (303,114.4) node [anchor=north west, draw=none, fill=none][inner sep=0.75pt]    {$\ell ( y)$};
\draw (206,110.4) node [anchor=north west, draw=none, fill=none][inner sep=0.75pt]    {$\Omega '$};

\end{tikzpicture}
\caption{\label{fig}}
\end{figure}

Before proving Lemma \ref{lem:convex1}, we first derive several useful results around linear approximation of bounded convex functions. In particular, the next lemma shows that, given a convex function on a box in $(d-1)$-dimension, and a large collection of prescribed sub-boxes, the function is well-approximated by a linear function on at least one sub-box.

\begin{lemma}\label{lem:convex-lin}
	Let $m$ and $d$ be positive integers and let $c>2d/m$. Let $H : (-c,1+c)^{d-1} \to [0,1]$ be a convex function. For $v\in [m]^{d-1}$, let $Q_v$ be the cube $v/m + [0,1/m]^{d-1}$. Consider an arbitrary subset $I \subset [m]^{d-1}$. Then there exists $v\in I$ and a linear function $L_v$ on $Q_v$ such that 
\[
	\sup_{x \in Q_v} |H(x)-L_v(x)| \le \frac{4d^4m^{d-3}}{c|I|}.
\] 
\end{lemma}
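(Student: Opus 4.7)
The plan is a pigeonhole on the total Laplacian of $H$ combined with a local affine-approximation bound on cubes where the Laplacian integral is small.

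\emph{Step 1 (Laplacian budget).} Since $H$ is convex on $(-c,1+c)^{d-1}$ with range in $[0,1]$, a 1D convexity argument along each coordinate line (using that $H$ is bounded by $1$ on an interval of length $\ge 1+2c$) shows $|\partial_i H|\le 1/c$ on $[0,1]^{d-1}$. Interpreting $\Delta H$ as the trace of the Alexandrov Hessian measure (which is nonnegative since $H$ is convex) and integrating by parts after mollification, one obtains
\[
\int_{[0,1]^{d-1}}\Delta H\,dx \;\le\; \int_{\partial[0,1]^{d-1}}|\nabla H|\,dS \;\le\; \frac{2(d-1)}{c}.
\]

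\emph{Step 2 (Local affine approximation).} For each $v$, take $L_v$ to be a supporting hyperplane of $H$ at the centre $p_v$ of $Q_v$. Then $\phi_v := H - L_v \ge 0$ is convex with $\phi_v(p_v)=0$, so its supremum on $Q_v$ is attained at some vertex $w^*$. Convexity of $\phi_v$ along the main diagonal from $2p_v - w^*$ through $p_v$ to $w^*$ yields
\[
\sup_{Q_v}|H-L_v| \;=\; \phi_v(w^*) \;\le\; H(w^*)+H(2p_v-w^*)-2H(p_v),
\]
the second difference of $H$ at $p_v$ with step $w^*-p_v$. A second-order Taylor expansion at $p_v$, together with the psd inequality $u^{\top}\nabla^2H\,u\le|u|^2\,\Delta H$, bounds this second difference by $|w^*-p_v|^2=(d-1)/(4m^2)$ times a weighted line integral of $\Delta H$ along the diagonal through $p_v$.

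\emph{Step 3 (Summation and pigeonhole).} Summing the estimate of Step 2 over all $v\in[m]^{d-1}$ and applying Fubini to exchange summation and the diagonal line integrals, the many one-dimensional line integrals collectively reassemble into a controlled volume integral of $\Delta H$ over $[0,1]^{d-1}$. Together with Step 1 and the factor $m^{k-2}=m^{d-3}$ coming from $s^{2-k}$ (with $s=1/m$, $k=d-1$), this gives
\[
\sum_{v\in[m]^{d-1}}\sup_{Q_v}|H-L_v| \;\le\; \frac{4d^4\,m^{d-3}}{c},
\]
and averaging over $v\in I$ produces a single $v\in I$ achieving the desired bound.

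\emph{Main obstacle.} The main technical difficulty is extracting a constant polynomial in $d$ in passing from the pointwise Step 2 to the global Step 3: a naive pointwise estimate on each $Q_v$ loses an exponential-in-$d$ factor because one is choosing the worst of $2^{d-1}$ vertex directions through $p_v$. The Fubini step therefore has to be arranged carefully, possibly together with a sharper Hessian bound exploiting the full psd structure (or a suitable integration-by-parts) so that the $m^{d-1}$ diagonal line integrals combine into a single volume integral of $\Delta H$ controlled by Step~1. A secondary issue is that $H$ is only convex, not smooth, so the Taylor and Hessian manipulations must be carried out via mollification and a limit argument.
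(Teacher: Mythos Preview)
Your high-level plan---bound a global ``curvature budget'' and pigeonhole---is exactly the paper's strategy, but your Step~3 has a real gap that you yourself flag and do not close. The reassembly of the diagonal line integrals into a single volume integral of $\Delta H$ does not work: the direction $h_v=w^*-p_v$ depends on $v$, and even for a fixed diagonal direction the segments $\{p_v+th:\,t\in[-1,1]\}$ lie on a discrete family of $\sim m^{d-2}$ parallel lines, not a foliation of $[0,1]^{d-1}$, so no Fubini turns the sum into $\int_{[0,1]^{d-1}}\Delta H$. Partitioning by the $2^{d-1}$ possible values of $h_v$ and telescoping along each family of diagonals does give a bound, but with an unavoidable $2^{d-1}$ factor---precisely the exponential loss you identify.

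The paper avoids this by reducing to \emph{coordinate} directions before summing. The content of its Claim is that if no affine $L_v$ approximates $H$ on $Q_v$ to within $\delta$, then writing the bad point as $v/m+\sum_i a_ie_i$ and using convexity of $H$ along the barycentric decomposition, one finds a single index $i$ with $\partial_i H(v/m+(d/m)e_i)-\partial_i H(v/m)\ge \delta m/d^2$. Now the direction is axis-aligned, so one can pigeonhole on $i$ and on the $(d-2)$-dimensional fiber, obtaining $\ge |I|/(d^2 m^{d-2})$ well-separated points on a single coordinate line where $\partial_i H$ jumps by $\delta m/d^2$; telescoping and the bound $|\partial_i H|\ll 1/c$ force $\delta\le 4d^4 m^{d-3}/(c|I|)$. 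Your argument can be repaired the same way: instead of bounding $\phi_v(w^*)$ by a diagonal second difference, write $w^*=\tfrac{1}{d-1}\sum_i\bigl(p_v+(d-1)\epsilon_i e_i/(2m)\bigr)$ and use convexity to get $\phi_v(w^*)\le \tfrac{1}{d-1}\sum_i D^2_{(d-1)e_i/(2m)}H(p_v)$; the coordinate second differences then telescope cleanly along each axis with only polynomial-in-$d$ losses. That fix is, in essence, the paper's proof.
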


To illustrate the numerology in this lemma, let us take $I = [m]^{d-1}$ and take $H$ to be a $C^2$-smooth function with bounded second derivatives. Then for $v \in [0,1]^{d-1}$ we have Taylor approximation
\[
H(v+x) = H(v)+\nabla_v H (x) + O(\|x\|_\infty^2)
\]
so if we have $\|x\|_\infty \le 1/m$ then we can take the linear function $L_v = H(v)+\nabla_v H$ and obtain $|H(v+x) - L_v(x)| = O(m^{-2}) = O(\frac{m^{d-3}}{|I|})$, matching the bound stated in the lemma. Since a convex function is differentiable almost everywhere this suggests that Taylor expansion gives a good approximation in neighborhoods of most points $v \in [0,1]^{d-1}$. Lemma \ref{lem:convex-lin} is a discrete version of this continuous statement and it moreover gives us a precise control on the set of `exceptional' points $v$ where Taylor approximation fails.


We will use the following basic claim. We let $e_1,\ldots, e_{d-1}$ denote the standard basis vectors in $\R^{d-1}$. 
\begin{claim}\label{claim:incr-der}
    Let $m$ and $d$ be positive integers and let $c$ be such that $c > d/m$. 
	Let $H : (-c,1+c)^{d-1} \to [0,1]$ be convex and differentiable. For $v\in [m]^{d-1}$, assume that there is no function $L_v$ on $Q_v$ such that $\sup_{x\in Q_v} |H(x)-L_v(x)|\le \delta$. Then there exists $i\in [d-1]$ such that $\partial_i H(v/m+(d/m)e_i)\ge \partial_i H(v/m) + \frac{\delta m}{d^2}$. 
\end{claim}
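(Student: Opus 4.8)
\textbf{Proof plan for Claim \ref{claim:incr-der}.}

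The plan is to argue by contrapositive, extracting a good linear approximation to $H$ on $Q_v$ directly from the hypothesis that all directional partial derivatives increase by less than $\delta m/d^2$ over the relevant displacement. So suppose that for every $i \in [d-1]$ we have $\partial_i H(v/m + (d/m)e_i) < \partial_i H(v/m) + \frac{\delta m}{d^2}$; our goal is to produce a linear function $L_v$ with $\sup_{x\in Q_v}|H(x) - L_v(x)| \le \delta$. Note first that $Q_v = v/m + [0,1/m]^{d-1}$, so the ``reference corner'' $p := v/m$ is one vertex of the cube, and the points $p + (d/m)e_i$ all lie in the domain $(-c, 1+c)^{d-1}$ since $c > d/m$ and $v \in [m]^{d-1}$. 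The natural candidate is the first-order Taylor polynomial at $p$:
\[
L_v(x) = H(p) + \sum_{i=1}^{d-1} \partial_i H(p)\,(x_i - p_i).
\]
I would then bound $|H(x) - L_v(x)|$ for $x \in Q_v$ by walking from $p$ to $x$ along axis-parallel segments, one coordinate at a time, and on each segment using the fundamental theorem of calculus together with convexity to control the error.

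\textbf{Key steps.} First, fix $x \in Q_v$ and write $x = p + t$ with $t \in [0,1/m]^{d-1}$. Introduce the intermediate points $p^{(0)} = p, p^{(1)} = p + t_1 e_1, \dots, p^{(d-1)} = x$, so that $p^{(k)} = p^{(k-1)} + t_k e_k$. Then
\[
H(x) - L_v(x) = \sum_{k=1}^{d-1}\Bigl( H(p^{(k)}) - H(p^{(k-1)}) - \partial_k H(p)\, t_k \Bigr),
\]
and on the $k$-th segment the one-dimensional fundamental theorem of calculus gives $H(p^{(k)}) - H(p^{(k-1)}) = \int_0^{t_k} \partial_k H(p^{(k-1)} + s e_k)\, ds$. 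Hence each summand equals $\int_0^{t_k}\bigl(\partial_k H(p^{(k-1)} + s e_k) - \partial_k H(p)\bigr)\,ds$, and I need to bound $|\partial_k H(p^{(k-1)} + s e_k) - \partial_k H(p)|$ for $0 \le s \le t_k \le 1/m$. Second, I use convexity of $H$ in the $k$-th variable: $\partial_k H$ is monotone nondecreasing along any line in the $e_k$-direction, and more generally convexity gives monotonicity of $\partial_k H$ as one moves in the $e_k$-direction, so $\partial_k H(p^{(k-1)} + s e_k)$ lies between $\partial_k H$ evaluated at the ``lowest'' and ``highest'' relevant points. Comparing to $\partial_k H(p)$: on one side I need a lower bound, which follows from moving ``backwards'' being impossible to decrease past a controlled amount, and on the other side the upper bound must come from the hypothesis that $\partial_k H(p + (d/m)e_k) - \partial_k H(p) < \delta m/d^2$, since $p^{(k-1)} + s e_k$ differs from $p$ in at most $d-1 < d$ coordinates each by at most $1/m$, and one can dominate it (using monotonicity in each coordinate direction, again from convexity) by $p + (d/m)e_k$ plus small perturbations — here I would compare $\partial_k H$ at $p^{(k-1)} + s e_k$ with $\partial_k H$ at $p + (d/m)e_k$ by moving the other coordinates, invoking that the cross-differences of a convex function are controlled, or alternatively choosing $d/m$ precisely so that the full ``box walk'' in all coordinates is absorbed. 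Third, assembling: each of the $d-1$ summands is at most $t_k \cdot \frac{\delta m}{d^2} \cdot (\text{const})$ in absolute value, and since $t_k \le 1/m$ this contributes at most $\frac{\delta}{d^2}\cdot(\text{const})$ per coordinate, and summing over $k \le d-1$ coordinates yields a total error $\le \frac{(d-1)\,\delta}{d^2}\cdot(\text{const}) \le \delta$ provided the constant is at most $d/(d-1)$, which is where the factor $d^2$ in the denominator (rather than, say, $d$) buys the slack.

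\textbf{Main obstacle.} The delicate point is the second step: controlling $\partial_k H$ at a point differing from $p$ in \emph{several} coordinates using only a hypothesis about the single point $p + (d/m)e_k$. Convexity of $H$ gives monotonicity of $\partial_k H$ along the $e_k$-direction for free, but to move the other $d-2$ coordinates from their values in $p^{(k-1)}$ back to those in $p$ without losing control, I must use that mixed second-order behavior of a convex function is also monotone in the right sense — concretely, that $\partial_k H(y + \lambda e_j) \ge \partial_k H(y)$ fails in general, so instead I should bound $\partial_k H$ at $p^{(k-1)} + s e_k$ by $\partial_k H$ at the corner $p + (1/m)(e_1 + \dots + e_{k-1}) + s e_k$ and then note this corner is coordinatewise $\le p + (d/m) e_k$ only in the $e_k$ coordinate; the resolution is to instead compare directly via a telescoping in the $d-1$ coordinates of the quantity $\partial_k H(\cdot + e_k$-shift$) - \partial_k H(\cdot)$, each term of which is a second difference of $H$ and hence nonnegative by convexity and bounded in total by the hypothesis applied with the generous $d/m$ displacement. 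Getting this telescoping bookkeeping exactly right — ensuring all the auxiliary evaluation points stay inside $(-c,1+c)^{d-1}$, which is exactly why the claim requires $c > d/m$ — is the part that needs the most care, though it is ultimately routine once the structure is set up.
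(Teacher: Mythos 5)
Your overall setup is right (the tangent plane at $p = v/m$ is the correct choice for $L_v$, and one does want to reduce matters to one-dimensional behavior of partial derivatives), but the central step of your plan is wrong, and the fix you sketch in the ``main obstacle'' paragraph is based on a false claim.

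You need to compare $\partial_k H(p^{(k-1)} + s e_k)$ with $\partial_k H(p)$, where $p^{(k-1)}$ differs from $p$ in coordinates $1,\dots,k-1$. You assert that the relevant telescoping differences are ``second differences of $H$ and hence nonnegative by convexity.'' Those are \emph{mixed} second differences, and convexity does \emph{not} make them nonnegative. A concrete counterexample: $H(x,y) = (x-y)^2$ is convex, yet $\partial_x H(x, y+h) - \partial_x H(x,y) = -2h < 0$ for $h>0$, so $\partial_x H$ strictly decreases as you move in the $+y$ direction. Consequently you have neither a lower bound nor an upper bound on $\partial_k H(p^{(k-1)} + s e_k) - \partial_k H(p)$ coming from moving the first $k-1$ coordinates, and the axis-parallel walk from $p$ to $x$ does not close. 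Convexity gives you monotonicity of $\partial_k H$ only along the $e_k$-axis, not across the other coordinates.

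The paper sidesteps this entirely by never moving the ``wrong'' coordinates. Given a bad point $y = v/m + \sum_i a_i e_i$ with $H(y) - L_v(y) \ge \delta$, set $s = \sum_i a_i \le d/m$ and note that $y = \sum_i \frac{a_i}{s}(v/m + s e_i)$ is a convex combination of points lying on the coordinate axes through $v/m$. Convexity of $H$ (applied to $H$ itself, not to $\partial_i H$) gives $\sum_i \frac{a_i}{s} (H - L_v)(v/m + s e_i) \ge H(y) - L_v(y) \ge \delta$, so some single direction $i$ already carries at least a $\delta/d$ excess. Everything from there on is one-dimensional along the $i$-th axis: the mean value theorem on $[0,s]$ produces $s'$ with $\partial_i(H - L_v)(v/m + s' e_i) \ge \delta m/d^2$, and then one-dimensional convexity pushes this to $s' = d/m$. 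The mixed-partials issue never arises because the entire comparison happens along a single coordinate axis. You should adopt this convex-combination reduction rather than the coordinate-by-coordinate walk.
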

\begin{proof}
	Let $u$ be the gradient of $H$ at $v$. Let $L_v$ be the linear function agreeing with $H$ at $v$ and with gradient $u$. By assumption, there exists $y\in Q_v$ such that $H(y) \ge L_v(y) + \delta$. Writing $y = v/m + \sum_{i=1}^{d-1}a_i e_i$ for the standard basis $e_1,\dots,e_{d-1}$ and $s=\sum_{i=1}^{d-1}a_i \in (0,d/m]$, we have $y = \sum_{i=1}^{d-1} \frac{a_i}{s} (v/m+se_i)$ so by convexity,
\[
	\sum_{i=1}^{d-1} \frac{a_i}{s} H(v/m+se_i) \ge H(y) \ge L_v(y)+\delta = \sum_{i=1}^{d-1} \frac{a_i}{s} L(v/m+se_i) + \delta.
\]
As such, there exists $i\in [d-1]$ such that $H(v/m+se_i) \ge L(v/m+se_i) + \delta/d$. Because of this and $H(v/m)= L(v/m)$ and $s \le d/m$, there exists $s' \in [0, d/m]$ such that $\partial_i H(v/m+ s' e_i) \ge \partial_i L(v/m+s' e_i) +\frac{\delta m}{d^2}$. Since $H$ is convex and $L$ is linear and $\partial_i L(v/m+s'e_i) =\partial_i L(v/m) = \partial_i H(v/m)$, we conclude that 
\[
\partial_i H(v/m+(d/m)e_i) \ge \partial_i H(v/m + s' e_i) \ge \partial_i L(v/m + s' e_i) + \frac{\delta m}{d^2} = \partial_i H(v/m) + \frac{\delta m}{d^2} 
\]
as desired.
\end{proof}

\begin{proof}[Proof of Lemma \ref{lem:convex-lin}]
    Note that, by standard convolution with smooth mollifiers, there exist smooth convex functions whose $L_\infty$ distance to $H$ on $(-c/2,1+c/2)$ are arbitrarily small. Thus, in proving Lemma \ref{lem:convex-lin}, we can assume without loss of generality that $H$ is everywhere differentiable in $(-c/2,1+c/2)^{d-1}$. 
    
	Assume that for all $v\in I$, there is no linear function $L_v$ on $Q_v$ such that $|H(x)-L_v(x)| \le \delta = \frac{4d^4 m^{d-3}}{c |I|}$ for all $x \in Q_v$. Then by Claim \ref{claim:incr-der}, for each $v\in I$, there is $i_v\in [d-1]$ such that $\partial_i H(v/m+(d/m)e_i)\ge \partial_i H(v/m) + \frac{\delta m}{d^2}$. 
	
	Let $\pi_i$ be the projection $\pi_i(v_1,\dots,v_{d-1}) = (v_1, \ldots, v_{i-1}, v_{i+1}, \ldots, v_{d-1})$. By the pigeon-hole principle, there exists $i\in [d-1]$ and $a\in [m]^{d-2}$ such that $\pi_i^{-1}(a)$ contains $\ell \ge |I|/(d^2m^{d-2})$ points $v\in I$ for which $i_v=i$, and furthermore every two such points are separated by distance at least $d$. Order the points $v\in I\cap \pi_i^{-1}(a)$ by $v_1,\dots,v_\ell$, we then have $\partial_i H(v_{j+1}/m) \ge \partial_i H(v_{j}/m) + \frac{\delta m}{d^2}$ and so 
     \[
     \partial_i H(v_{\ell}/m) \ge \partial_i H(v_{1}/m)  + \frac{\ell \delta m}{d^2}.
     \]
     So we get that $\partial_i H(v_{\ell}/m) \ge \frac{\ell \delta m}{2d^2}$ or $\partial_i H(v_{1}/m) \le -\frac{\ell \delta m}{2d^2}$. In either case, by convexity of $H$ on $(-c/2,1+c/2)^{d-1}$, we then have 
    \[
        1 \ge \sup H - \inf H \ge \frac{c}{2}\frac{\ell \delta m}{2d^2}.
    \]
    This contradicts our choice of $\delta$. 
\end{proof}

We are now ready to prove Lemma \ref{lem:convex1}. 
\begin{proof}[Proof of Lemma \ref{lem:convex1}]
Any convex body $\Omega \subset \R^d$ can be covered by a rectangular box $B$ whose volume exceeds the volume of $\Omega$ by a constant factor depending on $d$. So by replacing $\Omega$ with $B$ and making an affine change of the coordinates we may assume that $\Omega = [-1,1]^d$. 

Without loss of generality, we may assume that $\varepsilon \le \frac{1}{d+1}$. We then take $\tau = 0.1 \varepsilon$ and let $\delta_0$ be sufficiently small depending on $\varepsilon$ and $d$. 

For $r \in 2^{-\N}$, a dyadic $r$-box $B \subset \R^d$ is a set of points of the form
$$
B = [x_1, x_1+r) \times [x_2, x_2+r) \times \ldots \times [x_d, x_d+r),
$$
for some $x_i \in r\Z$.

Let $r = C\delta^{1/d}$ be a dyadic number for some suitable constant $C \in (8, 16]$. For $\mu \in 2^{-\N}$ define $\mathcal B_\mu$ to be the set of dyadic $r$-boxes $B$ such that $|B \cap A| \in [\mu |A|, 2\mu|A|)$. Note that the total number of dyadic $r$-boxes $B \subset \Omega$ is $(2/r)^d$, so for $c = 2^{-d-2}$ we have
\[
\sum_{c r^d \le \mu \le 1} \sum_{B \in \mathcal B_\mu} |A \cap B| \ge (1-2^{d+1}c) |A| \ge |A|/2.
\]
Thus by the pigeon-hole principle there exists dyadic scale $\mu \ge c r^d$ such that 
\begin{equation}\label{eq:I}
\sum_{B \in \mathcal B_\mu} |A \cap B| \ge \frac{|A|/2}{\log_2 (1/cr^d)} \ge \frac{|A|}{4\log(1/\delta)},    
\end{equation}
where the last inequality holds for $\delta\le \delta_0$. 
Label $\mathcal B_\mu = \{B_i\}_{i\in I}$ and let $A' = \bigcup_{i\in I} (A\cap B_i)$. Note that by the choice of $C$ and $c$, we have $|A\cap B_i| \ge \mu |A| \ge c r^d |A| = 2^{-d-2} C^d \delta |A| > \delta |A|$.

Observe that the collection of boxes $B_i$ is in convex position in the following sense. For a suitable constant $C' = 4\sqrt{d}$ let $\tilde B_i$ be the $C'r$-box with the same center as $B_i$, then 
\begin{equation}\label{eq:conv}
\tilde B_i \not \subset \operatorname{conv}\left(\bigcup_{j \in I\setminus \{i\}} B_j\right)    
\end{equation}
for any $i \in I$. Indeed, since the set $A$ is assumed to be in $\delta$-convex position and $|A\cap B_j| > \delta |A|$ for any $j\in I$, there exists a hyperplane $H$ partitioning $\mathbb{R}^d = H^- \cup H^+$ such that $H\cap B_i \neq \emptyset$ and $B_j \not \subset H^+$ for any $j \in I\setminus \{i\}$. If we let $H' \subset H^+$ be a hyperplane parallel to $H$ at a distance $ 2\sqrt{d} r$ from $H$, then $\tilde B_i \cap H' \neq \emptyset$ and $B_j \cap H'^{+} = \emptyset$ for every $j\in I\setminus \{i\}$, yielding (\ref{eq:conv}).

Let $P = \operatorname{conv}(\bigcup_{i\in I} B_i)$, this is a convex polytope inside $[-1,1]^d$. For each $i\in I$, let $x_i \in \partial P \cap \tilde B_i$ be an arbitrary point on the intersection of the boundary of $P$ and $\tilde B_i$ (by (\ref{eq:conv}) such a point exists). 
Let $w$ denote the width of $P$ and note that since $P \subset [-1,1]^d$, we have $\operatorname{Vol}_d(P) \le C' w$ for some constant $C'$ depending only on $d$. 
If $w \le \delta^{\tau} / 2C'$ then the set $\Omega' = P$ satisfies the conclusion of the lemma with $\eta = C'w$. Indeed, from (\ref{eq:I}) we easily get $|A\cap P| \ge \frac{|A|}{4\log(1/\delta)} \ge \eta^{\frac{d-1}{d+1}}|A|$ provided that $\delta \le \delta_0$ is small enough.  
Hence, we may assume that $w \ge \delta^{\tau}/2C'$ holds.

Under this assumption, we conclude that $P$ contains a ball of radius $\delta^\tau/4C'$. Translate $P$ so that it contains the ball of radius $\delta^\tau/4C'$ with center at the origin. Note that after the translation, $P \subset [-2,2]^d$ holds. Let $g \in \mathrm{SO}(d)$ be a uniformly random orthogonal map and denote $\pi: \R^{d} \rightarrow \R^{d-1}$ the projection onto the first $d-1$ coordinates. For a suitable constant $c' = \frac{1}{8C' \sqrt{d}}>0$, let $u = c' \delta^{\tau}$ and define $I(g)$ to be the set of indices $i \in I$ such that 
\begin{equation}\label{eq:projection}
\pi(g(x_i)) \in [-u,u]^{d-1} \text{ and }g(x_i)_{d} > 0
\end{equation}
Note that each $x_i$ is a vector of length at most $2\sqrt{d}$ and since, for fixed $i$, $g(x_i)$ is a uniformly random vector of length $\|x_i\|_2$, the probability of (\ref{eq:projection}) is at least $c'' u^{d-1}$ for each fixed $i$ for some constant $c''>0$ depending on $d$. By linearity of expectation, there is a choice of $g$ such that $|I(g)| \ge c'' u^{d-1} |I|$. Fix this choice of $g$ for the rest of the proof. 

For $y \in [-2u, 2u]^{d-1}$ let $h(y) > 0$ be largest such that $(y, h(y)) \in g(P)$. Since $P$ contains the ball of radius $\delta^\tau/4C' = 2\sqrt{d}u$ around the origin, we can guarantee $[-2u, 2u]^{d-1} \subset g(P)$ and hence $h$ is well-defined on the box $[-2u, 2u]^{d-1}$. Using that $P$ is convex, we get that $h$ is a concave function on the box $[-2u, 2u]^{d-1}$, i.e. 
\[
h(\alpha y + (1-\alpha) y') \ge \alpha h(y) + (1-\alpha) h(y')
\]
for any $y, y' \in [-2u,2u]^{d-1}$ and $\alpha \in [0,1]$. Since $g(P) \subset [-2\sqrt{d}, 2\sqrt{d}]^d$, we have $h(y)\le 2\sqrt{d}$.
Let $y_i \in [-u, u]^{d-1}$, $i\in I(g)$, be the unique vector such that $g(x_i) = (y_i, h(y_i))$ and consider the set $Y = \{y_i, ~i\in I(g)\}$. Fix an integer $m = \lceil\delta^{-\frac{0.3}{d+1}}\rceil$ and for $v = (v_1, \ldots, v_{d-1}) \in [-m, m)^{d-1}\cap \Z^{d-1}$ let 
\[
Q_v = \left[\frac{u}{m} v_1, \frac{u}{m} (v_1+1)\right) \times \ldots \times \left[\frac{u}{m} v_{d-1}, \frac{u}{m} (v_{d-1}+1)\right) \subset [-u,u]^{d-1}.
\]
For dyadic $K> 0$ define
\[
J_K = \left\{v \in [-m,m)^{d-1} \cap \Z^{d-1}:~ |Q_v \cap Y| \in \left[\frac{K |Y|}{m^{d-1}}, \frac{2K|Y|}{m^{d-1}}\right) \right\}.
\]

Note that we have $|Q_v \cap Y| \le |Y| \le |I(g)| \le \delta^{-1}$, so $K$ can take up to $\log_2(1/\delta)$ dyadic values. Thus, there exists $K$ such that
\begin{equation}\label{eq:dyadic-thing}
\frac{|Y|}{\log_2(1/\delta)} \le \sum_{v \in J_K} |Q_v \cap Y| \le |J_K| \frac{2K|Y|}{m^{d-1}}    
\end{equation}
and so $|J_K| \ge \frac{1}{2\log(1/\delta)}\frac{m^{d-1}}{K}$. Note that since $|J_K| \le (2m)^{d-1}$, (\ref{eq:dyadic-thing}) implies that $K \ge \frac{1}{2^{d} \log_2(1/\delta)}$.

Let us suppose that $K \le m^{\frac{d-1}{d+1}}$ holds, we will deal with the (easy) case when $K$ is large at the end of the proof.
Let $\mathbf{1} \in \R^{d-1}$ be the vector whose coordinates are all $1$ and consider the function $H(x) = -\frac{1}{2\sqrt{d}}h(u(2 x-\mathbf{1})) + 1$ defined on $[-1/2, 3/2]^{d-1}$ and taking values in $[0,1]$. By Lemma \ref{lem:convex-lin} applied to the function $H$ and the set $J_K$, we can then find $v\in J_K$ and a linear function $L$ such that $|H(x) - L(x)| \le \frac{8d^4 (2m)^{d-3}}{|J_K|} = 2^d d^4 \frac{m^{d-3}}{|J_K|}$ for $x \in (2u)^{-1}Q_v + {\bf 1}$. So undoing the change of coordinates gives a linear function $\ell$ for which 
\[
|h(v’)-\ell(v’)|\le \frac{2^{d+1}d^{9/2}m^{d-3}}{|J_K|} \le  \frac{C_{1} K\log (1/\delta)}{m^2} 
\] 
for all $v’\in Q_v$, for some constant $C_{1}$ depending only on $d$. In particular, note that using the restriction $K \le m^{\frac{d-1}{d+1}} \ll m$ for sufficiently small $\delta$, we get $|h(v') -\ell(v')| \le 0.1 m^{-1} \le 0.1$. 

Write $\ell(y) = \sum a_i y_i -b$. We claim that $|a_i| \le C_2 u^{-1}$ for a suitable constant $C_2 = 3\sqrt{d}$ depending only on $d$. Indeed, if we have $|a_i| \ge C_2 u^{-1}$ for some $i$ then for $z = \operatorname{sign}(a_i) u e_i \in [-u, u]^{d-1}$ we get $\ell(z) - \ell(0) = |a_i| u \ge C_2$.
Now if we take $v'$ so that $v', v'+\frac{z}{m} \in Q_v$, then by concavity
\begin{align*}
h(v'-z) &\le (m+1) h(v') - m h(v' + z/m) \le \\ &\le h(v') + m (\ell(v') - \ell(v'+z/m)) + 0.2 \le \\
&\le 
h(v') - C_2 + 0.2.    
\end{align*}
Since $C_2 = 3\sqrt{d}$, this contradicts $h(y) \in [0, 2\sqrt{d}]$.

We now proceed to define the desired convex set $\Omega'$. Let
\begin{equation}\label{eq:omega}
\Omega' = \{ (v', h') \in \R^d:~ v' \in Q_v, ~ |h' - \ell(v')| \le \frac{C_{1} K\log (1/\delta)}{m^2} \}.    
\end{equation}
It is clear that $\Omega'$ is a convex body of volume at most $2C_{1} K(\log (1/\delta)) u^{d-1} m^{-d-1}$ and that it contains all points $g(x_i)=(y_i, h(y_i))$ for $y_i \in Y \cap Q_v$. So since we chose $v \in J_K$, we have
\[
|\Omega'\cap \{g(x_i)\}| \ge |Y\cap Q_v| \ge \frac{K|Y|}{ m^{d-1}}.
\]
Recalling our construction, each $x_i$ corresponds to a $C'r$-box $\tilde B_i$ such that $x_i\in \partial P \cap \tilde B_i$ and $\tilde B_i$ contains an $r$-box $B_i$ such that $|A\cap B_i| \in [\mu |A|, 2\mu|A|]$. If we take $S$ to be a $(2\sqrt{d} C') r$-ball centered at the origin, then we have $B_i \subset \tilde B_i \subset S+x_i $ and
\[
\bigcup_{i: g(x_i) \in \Omega'} g(B_i) \subset \Omega' + S
\]
and so we get
\[
|A \cap g^{-1}(\Omega' + S)| \ge \mu |A| |\Omega' \cap \{g(x_i)\}| \ge \mu |A| \frac{K|Y|}{ m^{d-1}}.
\]
Recall that $Y$ is the set of points $y_i$ such that $g(x_i) = (y_i, h(y_i) )$ and $i \in I(g)$. So $|Y|= |I(g)|$ and we chose $g$ so that $|I(g)| \ge c''u^{d-1} |I|$ holds. In turn, by (\ref{eq:I}), we have $|I| \ge \frac{1}{8\log (1/\delta) \mu}$. Putting these estimates together, we obtain
\begin{equation}\label{eq:A}
|A \cap g^{-1}(\Omega' + S)| \ge \frac{c''u^{d-1}K|A|}{8 m^{d-1} \log(1/\delta)}.    
\end{equation}
It remains to upper bound the volume of the body $\Omega''=g^{-1}(\Omega' + S)$. Since $g \in \mathrm{SO}(d)$, it is the same as the volume of $\Omega' + S$. Using (\ref{eq:omega}) and the bound on the coefficients of $\ell$, one can check that the width of $\Omega'$ is at least $\frac{C_{1}K(\log (1/\delta)) u}{2C_2\sqrt{d} m^2}$. Note that we chose $u = c'\delta^{\tau} \ge c' \delta^{\frac{0.1}{d+1}}$ since $\varepsilon \le \frac{1}{d+1}$ by assumption, $m = \lceil\delta^{-\frac{0.3}{d+1}}\rceil$ and $r = 4\delta^{1/d}$, so we obtain $u / m^2 \ge c' \delta^{\frac{0.7}{d+1}} > r$. 
Thus, $\Omega'+S$ has the same volume as $\Omega'$ up to a constant factor depending only on $d$. We conclude that 
\[
\mathrm{Vol}_d(\Omega'') = \mathrm{Vol}_d(\Omega' + S) \le \frac{C''(K\log (1/\delta)) u^{d-1}}{m^{d+1}}
\]
for some $C''$ depending on $d$. 
In the conclusion of Lemma \ref{lem:convex1}, let us define $\eta = \frac{C'' (K\log (1/\delta)) u^{d-1}}{m^{d+1}}$. We need to verify that 
\[
|A\cap \Omega''| \ge \eta^{\frac{d-1}{d+1} +\varepsilon} |A|
\]
holds. By (\ref{eq:A}) we get
\begin{align*}\label{eq:req-m}
\frac{|A\cap \Omega''|}{|A|} \eta^{-\frac{d-1}{d+1} -\varepsilon} &\ge \frac{c''u^{d-1} K}{ 8m^{d-1} \log (1/\delta)} \eta^{-\frac{d-1}{d+1} -\varepsilon}  \\ &\ge \frac{c''u^{d-1} K}{ 8m^{d-1} \log (1/\delta)}\left(\frac{C'' (K\log (1/\delta)) u^{d-1}}{m^{d+1}}\right)^{-\frac{d-1}{d+1}-\varepsilon} \\
&\ge m^{\varepsilon(d+1)} u^{\frac{2(d-1)}{d+1}} K^{\frac{2}{d+1}-\varepsilon} \ge m^{\varepsilon(d+1)} u^2 
\end{align*}
where we took sufficiently small $\delta$ and used the term $u^{-(d-1)\varepsilon}$ to consume constants and logarithms and at the end we used the lower bound $K \ge \frac{1}{2^d \log_2(1/\delta)}$. So using $m = \lceil\delta^{-\frac{0.3}{d+1}}\rceil$ and $u = c' \delta^{0.1\varepsilon}$, we get
\[
\frac{|A\cap \Omega''|}{|A|}\ge m^{\varepsilon (d+1)} u^2  \ge \delta^{-0.3 \varepsilon} c'^2\delta^{0.2\varepsilon} > 1,
\]
for sufficiently small $\delta$. This completes the proof of Lemma \ref{lem:convex1} in the case when $K \le m^{\frac{d-1}{d+1}}$.


Let us now deal with the case $K \ge m^{\frac{d-1}{d+1}}$. In this case we do not use Lemma \ref{lem:convex-lin} and construct the convex set $\Omega'$ directly. 
Fix an arbitrary $v \in J_K$. Since $h: [-2u, 2u]^{d-1}\rightarrow [0, 2\sqrt{d}]$ is convex, we have $|\partial_i h(v')|\le 2\sqrt{d} u^{-1}$ for $v' \in [-u, u]^{d-1}$. Thus, $h(Q_v)$ is contained in an interval of length at most $\frac{2d^{3/2}}{m}$. Let $\Omega' = Q_v \times h(Q_v)$. Each $y \in Y\cap Q_v$ gives rise to a unique $i\in I(g)$ such that $\pi(g(x_i)) = y$ and $g(x_i) \in \Omega'$. As previously, we have the inclusion $B_i \subset x_i + S$ and so for $\Omega'' = g^{-1}(\Omega' + S)$ we get
\[
|A\cap \Omega''| =|A \cap g^{-1}(\Omega' + S)| \ge \sum_{i: g(x_i) \in \Omega'} |A \cap B_i| \ge \mu|A| |Y \cap Q_v| \ge \mu |A| \frac{K|Y|}{m^{d-1}}.
\]
Since $r \ll u$, the volume of $\Omega''$ is the same as the volume of $\Omega'$, up to a constant depending on $d$. So we can take $\eta = \frac{\operatorname{Vol}(\Omega'')}{\operatorname{Vol}(\Omega)} \sim u^{d-1} m^{-d}$ and compute by (\ref{eq:A})
\begin{align*}
\frac{|A\cap \Omega''|}{|A|} \eta^{-\frac{d-1}{d+1}-\varepsilon} \ge \frac{c''u^{d-1}K}{8 m^{d-1} \log(1/\delta)} \eta^{-\frac{d-1}{d+1}-\varepsilon} \ge \\
\ge m^{- \frac{d-1}{d+1} + \varepsilon (d+1)} u^{2 \frac{d-1}{d+1} } K \ge m^{\varepsilon (d+1)} u^2 > 1
\end{align*}
which completes the proof.
\end{proof}

\section{Structure theorem}\label{section:structure_theorem}

We recall several useful definitions from \cite{conlon2023homogeneous}, generalized to subsets of $\mathbb{Z}^{\ell}$. 

\begin{definition}

    A {\em generalized arithmetic progression} or GAP, for short, in $\Z^\ell$ is a set of the form $Q = \{x + \sum_{i=1}^d n_i q_i: ~ 0\le n_i \le w_i-1\}$ where $x, q_1, \ldots, q_d \in \Z^\ell$ and $d, w_1, \ldots, w_d$ are positive integers. Note that all these elements are part of the data of a GAP $Q$.

    We say $d$ is the {\em dimension} of $Q$ and define the {\em volume} of $Q$ as $\operatorname{Vol}(Q) = \prod_{i=1}^d w_i$.

    We say $Q$ is {\em proper} if $|Q| = \operatorname{Vol}(Q)$, that is, all vectors $x + \sum_{i=1}^d n_i q_i: ~ 0\le n_i \le w_i-1$ in the representation of $Q$ are pairwise distinct.

    We say $Q$ is {\em homogeneous} if $x$ is an integer linear combination of $q_1, \ldots, q_d$.
\end{definition}

A $d$-dimensional homogeneous GAP $Q$ can be written in the form $Q= \{\sum_{i=1}^d n_i q_i, ~ a_i \le n_i \le b_i \}$ for some real numbers $a_i < b_i$.
Given a choice of intervals $[a_i, b_i]$ defining a homogeneous GAP $Q$ and a positive real number $c$ we write $c Q = \{\sum_{i=1}^d n_i q_i: ~ca_i \le n_i \le cb_i \}$. Note that if $c$ and $a_i, b_i$ are all integers then $c Q$ coincides with the $c$-fold sumset of $Q$.

Given a proper GAP $Q$ of dimension $d$, we denote by $\phi_Q: Q \to \mathbb{Z}^d$ its identification map, i.e.
\[
\phi_Q(x + \sum_{i=1}^d n_i q_i) = (n_1, \ldots, n_d).
\]

The proof of \cite{conlon2023homogeneous}, in verbatim, gives the same structural theorem in higher dimension. For the reader's convenience, we give a self-contained derivation of Theorem \ref{thm:struc} assuming the main result of \cite{conlon2023homogeneous} in Appendix \ref{section:appendix}.
 
\begin{theorem}\label{thm:struc}
For any $\ell$, $\beta > 1$, and $0 < \eta < 1$, there are positive constants $c$ and $d$ such that the
following holds. Let $A$ be a subset of an axis-aligned box $B\subseteq \mathbb{Z}^\ell$ of size $m$ with $|B| \le m^\beta$ and let $s \in [m^\eta, cm/ \log m]$. Then there exists a subset $\hat A$ of $A$ of size at least $m-c^{-1} s \log m$ and an integer $d'\le d$ such that the following holds. There is a $d'$-dimensional GAP $P$ containing $\hat{A} \cup \{0\}$ and a subset $A'$ of $\hat A$ of size at most $s$ such that $\Sigma(A')$ contains a homogeneous translate of $csP$. Furthermore, $csP$ is proper.
\end{theorem}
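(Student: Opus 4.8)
The plan is to reduce the statement to the one‑dimensional case $\ell=1$, which is precisely the main result of \cite{conlon2023homogeneous}, via a linear projection $\psi\colon\Z^\ell\to\Z$ that is a Freiman isomorphism of large order on all the relevant (bounded) sets. Concretely: translating, we may assume $B=\prod_{i=1}^\ell[0,b_i)$ with each $b_i\le|B|\le m^\beta$; set $M:=10\,s\,m^\beta$ and $\psi(x_1,\dots,x_\ell):=\sum_{i=1}^\ell x_iM^{i-1}$. Since $M$ exceeds $s$ times the side length of $B$, no carries occur when one adds at most $s$ elements of $B$, so $\psi$ is injective on $B$, is in fact a Freiman isomorphism of order $s$ from $B$ onto $\psi(B)$, and restricts to a Freiman isomorphism of order $\ge 2$ from $\Sigma(A')$ onto $\Sigma(\psi(A'))=\psi(\Sigma(A'))$ for every $A'\subseteq A$ with $|A'|\le s$. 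Moreover $\psi(B)\subseteq[0,M^\ell)\subseteq[0,m^{\beta'})$ for the constant $\beta':=(\beta+2)\ell$ (once $m$ is large), so with $n:=m^{\beta'}$ we have $\psi(A)\subseteq[n]$ and $n\le|\psi(A)|^{\beta'}$, using $|\psi(A)|=|A|=m$.

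\textbf{Applying the one‑dimensional theorem.} Apply the main theorem of \cite{conlon2023homogeneous} (equivalently, Theorem \ref{thm:struc} with $\ell=1$) with parameters $\beta'$ and $\eta$ to $\psi(A)\subseteq[n]$ and the same $s$; the hypothesis $s\in[m^\eta,c_1m/\log m]$ holds once our output constant $c$ is taken $\le c_1$, where $c_1=c_1(\beta',\eta)$ is the one‑dimensional constant, and $m=|\psi(A)|$. This produces a constant $\bar d\le d_1(\beta',\eta)$, a set $\overline A\subseteq\psi(A)$ with $|\overline A|\ge m-c_1^{-1}s\log m$, a $\bar d$‑dimensional GAP $\overline P\ni\overline A\cup\{0\}$, and a set $\overline{A'}\subseteq\overline A$ with $|\overline{A'}|\le s$ such that $\Sigma(\overline{A'})$ contains a homogeneous translate $\overline V$ of $c_1s\overline P$, with $c_1s\overline P$ proper.

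\textbf{Transferring back.} Put $\hat A:=\psi|_A^{-1}(\overline A)$ and $A':=\psi|_A^{-1}(\overline{A'})\subseteq\hat A$; by injectivity of $\psi$ on $A$ these have sizes $\ge m-c^{-1}s\log m$ (with $c\le c_1$) and $\le s$. Since $\psi$ is a Freiman isomorphism of order $2$ on $\Sigma(A')$, it identifies $\Sigma(A')$ with $\Sigma(\overline{A'})$ as sets carrying additive structure. Now invoke the standard lemma transporting GAP‑containment along a Freiman isomorphism of bounded order: applied to the Freiman isomorphism $\psi|_{\hat A\cup\{0\}}$ (whose order $s$ exceeds any fixed constant depending on $\bar d$, for $m$ large) and the GAP $\overline P\supseteq\overline A\cup\{0\}$, it yields a proper GAP $P\subseteq\Z^\ell$ of dimension $\le\bar d$ with $\hat A\cup\{0\}\subseteq P$, $|P|\le|\overline P|$, such that $\psi$ carries the GAP structure of $P$ into that of $\overline P$ (generators $Q_i\mapsto\bar q_i$, widths preserved) and the generators $Q_i$ lie in $O(1)\cdot(B-B)$, hence in a box of side $O(m^\beta)<M$. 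Consequently $\psi$ carries $c_1sP$ into $c_1s\overline P$ and homogeneous translates to homogeneous translates; pulling $\overline V\subseteq\Sigma(\overline{A'})$ back through the Freiman isomorphism $\psi^{-1}\colon\Sigma(\overline{A'})\to\Sigma(A')$ gives a homogeneous translate $V$ of $c_1sP$ inside $\Sigma(A')$, with $|V|=|\overline V|=\operatorname{Vol}(c_1s\overline P)=\operatorname{Vol}(c_1sP)$, so that $c_1sP$ is proper. Taking $c\le c_1$ and $d:=d_1(\beta',\eta)$ finishes the reduction.

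\textbf{Main obstacle.} The only genuinely delicate point is the last step: keeping every object inside the region on which $\psi$ is a Freiman isomorphism of the required order, while the GAP $\overline P$ returned by the one‑dimensional theorem lives in $[n]$ with $n$ a fixed power of $m$ far larger than $m$, so its generators and widths need not be bounded by any polynomial independent of $n$. The resolution has three parts. First, the transport lemma needs $\psi$ to be a Freiman isomorphism only of \emph{bounded} order, guaranteed by the single requirement $M\gg s\,m^\beta$ with no circular dependence on $n$; one must check the version of the transport lemma used really does place the generators $Q_i$ in $O(1)\cdot(B-B)$ and map the GAP structure of $P$ into that of $\overline P$. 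Second, the dilate $c_1sP$ itself need not be small — only its homogeneous translate $V$ must be, and $V\subseteq\Sigma(A')$ automatically lies in a box of side $<s\,m^\beta<M$, exactly where $\psi$ is injective, so the pullback of $\overline V$ is well defined and properness transfers. Third, one should cross‑check that the GAP obtained from transporting GAP‑containment and the GAP structure visible inside $\Sigma$ are the \emph{same} — this follows because both sets of generators are short preimages under $\psi$ of the same integers $\bar q_i$ and $\psi$ is injective on the box of short vectors. Getting the precise form of the transport lemma and this compatibility right is where the care is needed.
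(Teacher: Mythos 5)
The reduction via a ``base-$M$'' projection to $\Z$ is the same idea as the paper's appendix (which projects with base $H$ and applies Theorem~\ref{thm:sumset}), so you have the right skeleton. However, there is a genuine quantitative gap in the pull-back step. You take $M=10\,s\,m^\beta$, which is just barely enough to make $\psi$ a Freiman isomorphism of order $2$ on $\Sigma(A')$, but it is far too small to pull the output GAP $\overline P$ back to $\Z^\ell$. The point is that the $1$-dimensional theorem returns a GAP whose widths $\bar b_i-\bar a_i$ are only controlled through the volume bound $\operatorname{Vol}(\overline P)\le |\Sigma(\overline{A'})|/(cs)^{\bar d}\lesssim (s\,m^\beta)^\ell/s^{\bar d}$; they are not $O(1)$, not even polynomial in $m$ with an exponent independent of $\ell$. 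So a point of the putative pull-back $P=\{\sum n_iq_i:\bar a_i\le n_i\le\bar b_i\}$ with $q_i=\psi^{-1}(\bar q_i)$ can have coordinates as large as $\sim s\,m^\beta\cdot\max_i(\bar b_i-\bar a_i)$, which exceeds $M$ whenever $\bar d<\ell+1$. In that regime $\psi$ is not injective on $P$, so from $\psi(a)\in\overline P$ (for $a\in\hat A$) you cannot conclude $a\in P$, and the inclusion $\hat A\cup\{0\}\subset P$ — the very first property you need — fails to transfer. This is exactly why the paper takes the base $H=n^{\kappa}$ with the generous $\kappa=10\ell^3$: the explicit condition there, $cH>ns^2(sn)^\ell$, is precisely the wrap-around check that your $M$ does not satisfy.

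Two further points in the ``resolution'' paragraph do not hold up. First, the appeal to a ``standard transport lemma'' delivering a GAP $P$ with generators in $O(1)\cdot(B-B)$ and with $\psi$ carrying the GAP structure of $P$ onto that of $\overline P$ is not a known black box; a covering-type transport lemma would give you \emph{some} GAP containing $\hat A$, but not one whose $cs$-dilate is guaranteed to map onto $cs\overline P$, which is what you need to carry the homogeneous translate inside $\Sigma(\overline{A'})$ back to $\Sigma(A')$. Second, even the bound you assert for the generators is slightly off: the correct bound (read off from $x_0+cs\overline P\subset\Sigma(\overline{A'})\subset sQ$, $Q=\psi([-m^\beta,m^\beta]^\ell)$, together with homogeneity) is $q_i\in O(s)\cdot(B-B)$, not $O(1)\cdot(B-B)$. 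The paper sidesteps the transport lemma entirely by doing the explicit pull-back of generators and widths, and the whole point of the argument is the two concrete bounds you would need to supply here: $\bar q_i\in 2sQ$ (giving the generator size), and $\bar b_i-\bar a_i\le(sm^\beta)^\ell$ (giving the width size), followed by a choice of base $M$ large enough that everything fits inside the injective box. As written, your proof does not establish these bounds and chooses $M$ too small, so the argument has a real gap.
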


In fact, for our purposes it will be sufficient to apply this result with the specific value $s = s(A) := |A|/ (\log |A|)^2$. For the readers convenience, we state a corollary of Theorem \ref{thm:struc} using this value.

\begin{cor}\label{cor:struc}
    For any $\ell, \beta>1$ there exists constants $c$ and $d$ such that the following holds. Let $A$ be a subset of an axis-aligned box $B\subseteq \mathbb{Z}^\ell$ of size $m$ with $|B| \le m^\beta$. Then there exists a subset $\hat A$ of $A$ of size at least $m-c^{-1}m/\log m$ and an integer $d'\le d$ such that the following holds. There is a $d'$-dimensional GAP $P$ containing $\hat{A} \cup \{0\}$ and a subset $A'$ of $\hat A$ of size at most $s(A)=m/\log^2 m$ such that $\Sigma(A')$ contains a homogeneous translate of $csP$. Furthermore, we can assume that $P$ is symmetric and $csP$ is proper.
\end{cor}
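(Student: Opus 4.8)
The plan is to deduce Corollary \ref{cor:struc} from Theorem \ref{thm:struc} by specializing the parameter $s$ to the value $s(A) = m/\log^2 m$ and then performing two cosmetic cleanups: arranging that $P$ can be taken symmetric, and verifying that the hypotheses of the theorem are met for this choice of $s$. First I would check admissibility of $s(A)$: since $\beta > 1$ is fixed, the theorem is being invoked with some fixed $\eta \in (0,1)$ (say $\eta = 1/2$, or indeed any value, absorbing the resulting constants into $c$ and $d$), and we need $s(A) \in [m^\eta, cm/\log m]$. For $m$ large this holds because $m/\log^2 m \leq cm/\log m$ once $\log m \geq 1/c$, and $m/\log^2 m \geq m^\eta$ for any $\eta < 1$ once $m$ is large; the finitely many small $m$ can be handled by shrinking $c$ (or are vacuous since for them $\hat A = \emptyset$ is allowed). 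This yields $\hat A$ of size at least $m - c^{-1} s(A) \log m = m - c^{-1} m/\log m$, a $d'$-dimensional GAP $P$ containing $\hat A \cup \{0\}$, a subset $A' \subseteq \hat A$ with $|A'| \leq s(A) = m/\log^2 m$, and a homogeneous translate of $csP$ inside $\Sigma(A')$, with $csP$ proper — exactly the stated conclusion except for symmetry of $P$.

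Next I would upgrade $P$ to a symmetric GAP. Write $P = \{x + \sum_{i=1}^{d'} n_i q_i : 0 \leq n_i \leq w_i - 1\}$; replace it by $P^{\mathrm{sym}} = \{\sum_{i=1}^{d'} n_i q_i : -(w_i-1) \leq n_i \leq w_i - 1\}$, which is a homogeneous GAP of the same dimension $d'$, contains $0$, contains $P - P \supseteq P - x$, and in particular contains a translate of $\hat A$ (hence, after the corresponding translation by $-x$ which we are free to absorb since $\hat A \cup \{0\} \subseteq P$ and $0 \in P^{\mathrm{sym}}$)... more carefully: since $0 \in \hat A \cup \{0\} \subseteq P$ and $P$ is homogeneous we may assume $x$ itself lies in the integer span of the $q_i$, so $P \subseteq P^{\mathrm{sym}}$ directly after enlarging the coordinate ranges to cover $x$; concretely take ranges $-W_i \leq n_i \leq W_i$ with $W_i$ large enough (still $O(w_i)$) that $P \subseteq P^{\mathrm{sym}}$. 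Then $\hat A \cup \{0\} \subseteq P \subseteq P^{\mathrm{sym}}$. For the sumset containment, $csP$ is a homogeneous translate of a subset of $\Sigma(A')$; since enlarging the coordinate box only enlarges the GAP, $csP \subseteq cs P^{\mathrm{sym}}$ up to adjusting the constant $c$, and properness of $csP$ is the nontrivial point: we want $csP^{\mathrm{sym}}$ (or a homogeneous translate of it) still proper. Here I would instead keep the \emph{same} generators and the \emph{same} $c$, but observe that properness of $csP$ forces the map $(n_i) \mapsto \sum n_i q_i$ to be injective on a box of side $\sim cs w_i$ around the relevant center, which is a box strictly larger (in each coordinate) than the symmetric box of side $2 cs w_i$ only by a constant factor; rescaling $c$ down by a fixed factor $2$ restores properness of $cs P^{\mathrm{sym}}$. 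The constants $c, d$ change only by factors depending on $\ell, \beta$, which is allowed.

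The main obstacle I anticipate is the bookkeeping around properness and homogeneity when passing to the symmetric GAP: one must ensure that the symmetrized GAP still admits a homogeneous translate inside $\Sigma(A')$ that is proper, rather than merely containing the old one as a set. The clean way to handle this, which I would adopt, is to not symmetrize $csP$ directly but rather to note the following standard fact about proper GAPs: if $Q = \{\sum n_i q_i : a_i \leq n_i \leq b_i\}$ is proper, then for any subintervals $[a_i', b_i'] \subseteq [a_i, b_i]$ the restricted GAP is also proper, and moreover one can always center: choosing $c' = c/3$ (say), the symmetric box of side $2c's w_i$ fits inside a translate of the box of side $csw_i$, so $c's P^{\mathrm{sym}}$ (with a homogeneous translate) sits inside $csP$ as a proper sub-GAP, hence inside $\Sigma(A')$. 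Everything else — the size bound on $\hat A$, the dimension bound $d' \leq d$, the size bound on $A'$ — transfers verbatim after renaming constants. I would write this up in a few lines, citing Theorem \ref{thm:struc} for the substantive content and treating the symmetrization as the elementary observation it is.
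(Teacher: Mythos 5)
Your proposal is correct and is the specialization the paper intends: Corollary \ref{cor:struc} is stated without proof as Theorem \ref{thm:struc} applied with $s = s(A) = m/\log^2 m$, and your admissibility check that $s(A) \in [m^\eta, cm/\log m]$ for large $m$ (with any fixed $\eta \in (0,1)$, absorbing constants into $c,d$) is exactly the implicit content. The only genuine addition is the symmetrization of $P$, which the paper leaves unspoken; after a couple of false starts in your write-up (the inclusion $csP \subseteq cs P^{\mathrm{sym}}$ points the wrong way for getting a translate of $P^{\mathrm{sym}}$ inside $\Sigma(A')$, and is not what is needed), you settle on the correct argument: since $0 \in P$ forces $P$ homogeneous, pass to a symmetric $P^{\mathrm{sym}}$ with the same generators, shrink $c$ to $c' = c/3$, fit a homogeneous translate of $c's P^{\mathrm{sym}}$ inside $csP \subseteq \Sigma(A')$, and carry properness along by observing that, for a GAP with fixed generators, properness depends only on the coordinate widths (equivalently on the kernel of $(n_i) \mapsto \sum n_i q_i$ meeting the symmetric difference box only at the origin) and is therefore translation-invariant.
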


We say that $A$ is a $(\ell,\beta)$-set if $A\subseteq \mathbb{Z}^{\ell}$ and there exists a box $B \subseteq \mathbb{Z}^{\ell}$ such that $|B|\le |A|^{\beta}$. For a $(\ell,\beta)$-set $A$, define the {\em subset sum dimension} $d(A)$ of $A$ to be a number $d(A) = d'$ such that there exists a $d'$-dimensional GAP $P$ and subsets $A' \subset \hat A \subset A$ of size at most $s=s(A)$ and at least $|A|-c^{-1} s\log |A|$, respectively, so that $\hat A\cup \{0\} \subset P$ and $\Sigma(A')$ contains a homogeneous translate of $csP$ and $csP$ is proper. We denote the GAP $P$ containing $\hat{A} \cup \{0\}$ by $P(A)$. Given the GAP $P=P(A)$, we denote its identification map $\phi_P: \hat A \cup \{0\} \to \mathbb{Z}^{d'}$. 

Note that the definition of $d(A)$ implicitly depends on the particular choice of parameters $\ell$ and $\beta$. This is not going to be important in applications since these parameters will always stay bounded, so we suppress them from the notation.


\subsection{Reduction}

Observe that if $A$ is a non-averaging set, applying Corollary \ref{cor:struc} and considering the GAP $P = P(A)$ together with its identification map $\phi_P : \hat A \cup \{0\} \to \mathbb{Z}^{d'}$, we have that $\phi_P(\hat A \cup \{0\})$ is a non-averaging subset of the box $\phi_P(P)\subset \mathbb{Z}^{d'}$. 

We record the following simple observation.
\begin{lemma}\label{lem:shrink}
    Let $A$ be a subset of a $d$-dimensional GAP $P$. Assume that the subset sum dimension of $A$ is $d_1$. Then we have $|P(A)| \ll_d |P| s(A)^{-(d_1-d)}$.   
\end{lemma}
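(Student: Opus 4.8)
The plan is to compare two applications of the structure theorem: one to $A$ viewed as a subset of the $d$-dimensional GAP $P$, and one to $A$ viewed as an abstract $(\ell,\beta)$-set, which produces the $d_1$-dimensional GAP $P(A)$ containing $\hat A \cup \{0\}$. The point is that the same set $A'\subseteq \hat A$ of size at most $s = s(A)$ has $\Sigma(A')$ containing a homogeneous translate of $csP(A)$, and this translate is \emph{proper}, so $\Sigma(A')$ must contain at least $\operatorname{Vol}(csP(A)) \gg_d s^{d_1} \operatorname{Vol}(P(A))$ distinct points. On the other hand, $A'\subseteq \hat A \subseteq A \subseteq P$, so $\Sigma(A')$ is contained in the $|A'|$-fold sumset of $P$; since $P$ is a $d$-dimensional GAP and $|A'|\le s$, this sumset has size at most $(|A'|+1)^d \operatorname{Vol}(P) \ll_d s^d \operatorname{Vol}(P)$. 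Combining the two bounds, $s^{d_1}\operatorname{Vol}(P(A)) \ll_d s^d \operatorname{Vol}(P)$, which rearranges to $\operatorname{Vol}(P(A)) \ll_d \operatorname{Vol}(P)\, s^{-(d_1-d)}$. Finally, $|P(A)| = \operatorname{Vol}(P(A))$ and $|P| \ge \operatorname{Vol}(P)$ up to properness issues, or one simply works with volumes throughout and notes $|P(A)|$ can be taken proper; in any case one gets $|P(A)| \ll_d |P|\, s(A)^{-(d_1-d)}$.

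The key steps, in order, are: (i) recall from the definition of $d(A) = d_1$ that there is a $d_1$-dimensional GAP $P(A)$ with $\hat A \cup \{0\} \subseteq P(A)$ and a subset $A'$ of size at most $s(A)$ with $\Sigma(A')$ containing a homogeneous translate of $cs\,P(A)$ which is proper; (ii) lower bound $|\Sigma(A')|$ by $\operatorname{Vol}(cs\,P(A)) = \prod_i \lceil cs\, w_i'\rceil \gg_d s^{d_1}\operatorname{Vol}(P(A))$, using that properness makes all these points distinct and that each side length of $cs\,P(A)$ is at least a constant times $s$ times the corresponding side length of $P(A)$ (here one should be slightly careful that $cs \ge 1$, which holds since $s = |A|/\log^2|A|$ is large); (iii) upper bound $|\Sigma(A')|$: every subset sum of $A'$ lies in $\{\sum_{b\in B} b : B\subseteq A'\} \subseteq |A'|\,P - (\text{translate})$, and since $P$ is $d$-dimensional with side lengths $w_1,\dots,w_d$, the set of all such sums lies in a GAP of dimension $d$ and volume $\prod_i (|A'| w_i + 1) \le (s+1)^d \operatorname{Vol}(P) \ll_d s^d\operatorname{Vol}(P)$, hence $|\Sigma(A')| \ll_d s^d \operatorname{Vol}(P)$; (iv) chain the inequalities and divide.

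The main obstacle I anticipate is purely bookkeeping rather than conceptual: making sure the multiplicative constants genuinely depend only on $d$ (and not on $d_1$ or $\ell$) and handling the ceilings in $\operatorname{Vol}(cs\,P(A))$ versus $s^{d_1}\operatorname{Vol}(P(A))$ — one needs $cs$ to exceed $1$ so that each $\lceil cs\,w_i'\rceil \ge cs\,w_i' \ge (c/2) s \,w_i'$ (say), which is where the hypothesis that $A$ is large enough is implicitly used, as it always is when invoking Corollary \ref{cor:struc}. A second minor point is the distinction between $|P|$ and $\operatorname{Vol}(P)$ when $P$ is not assumed proper: since the claimed inequality only becomes stronger when $|P|$ is replaced by the larger $\operatorname{Vol}(P)$ is false in general — rather, $|P| \le \operatorname{Vol}(P)$ — one should run the argument so that $\operatorname{Vol}(P)$ appears on the right, and then note $\operatorname{Vol}(P) = |P|$ in the regime of interest (where $P = \phi_Q(P)$ comes from a proper GAP), or simply phrase the conclusion in terms of $\operatorname{Vol}$; the statement as used downstream will be for proper $P$, so this causes no real difficulty.
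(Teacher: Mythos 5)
Your proposal is correct and takes essentially the same route as the paper: lower-bound $|\Sigma(A')|$ by $|cs\,P(A)| \gg_d s^{d_1}|P(A)|$ using properness of $cs\,P(A)$, upper-bound it by $|sP| \ll_d s^d|P|$ using $A' \subseteq P$ and $|A'|\le s$, and divide. The paper's proof is the same three-line argument; the extra bookkeeping you flag (ceilings, $cs\ge 1$, $|P|$ versus $\operatorname{Vol}(P)$) is implicitly absorbed in the paper because the lemma is only ever invoked with $P$ a box, and you handle it correctly.
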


\begin{proof}
    Let $P_1 = P(A)$ be the GAP given by Corollary \ref{cor:struc} and let $A' \subset \hat A \subset A$ be the corresponding sets. We have, for $s=s(A)$, 
    \[
        |c s P_1| \gg s^{d_1}|P_1|,
    \]
    since $cs P_1$ is proper and on the other hand,
    \[
        |cs P_1| \le |\Sigma (A')| \le |s P| \le s^d |P|.
    \]
    Hence, 
    \[
        |P_1|  \ll s^{-(d_1-d)} |P|.
    \]
\end{proof}

For a similar result when $d_1 \le d$, we will make use of a (qualitative) discrete John's theorem \cite[Lemma 3.36]{TV}. Given a collection of vectors $\mathbf{v} = (v_1,\dots,v_d)$ and a tuple of integers $\mathbf{N} = (N_1,\dots,N_d)$, we define $P(\mathbf{v}, \mathbf{N}) = \{\sum_{i=1}^{d}n_iv_i: n_i \in [-N_i, N_i]\}$. 

\begin{lemma}\label{lem:john}
    Let $B$ be a symmetric convex subset of $\mathbb{R}^d$. Then there is an integer basis $\mathbf{v} = (v_1,\dots,v_d)$ and integers $\mathbf{N} = (N_1,\dots,N_d)$ such that $P(\mathbf{v}; c_d\mathbf{N}) \subseteq B\cap \mathbb{Z}^d \subseteq P(\mathbf{v}; \mathbf{N})$. 
\end{lemma}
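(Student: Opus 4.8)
The statement is exactly the qualitative discrete John's theorem, so the cleanest route is to invoke the classical John position result for convex bodies and then discretize via a lattice reduction step. First I would apply the (continuous) John ellipsoid theorem to the symmetric convex body $B \subseteq \R^d$: there is an origin-symmetric ellipsoid $E$ with $E \subseteq B \subseteq \sqrt{d}\,E$ (the symmetric case of John's theorem gives the factor $\sqrt d$ rather than $d$). Thus it suffices to produce the integer basis $\mathbf v$ and the integers $\mathbf N$ relative to $E$ instead of $B$, absorbing the loss $\sqrt d$ into the dimensional constant $c_d$.

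Next I would reduce to understanding the lattice $\Z^d$ with respect to the Euclidean structure induced by $E$. Writing $E = \{x : x^\top M x \le 1\}$ for a positive definite $M$, consider the norm $\|x\|_E = \sqrt{x^\top M x}$; then $E \cap \Z^d = \{v \in \Z^d : \|v\|_E \le 1\}$. I would run Minkowski's second theorem / successive-minima machinery, or equivalently take a Minkowski-reduced (or LLL-reduced) basis $v_1,\dots,v_d$ of $\Z^d$ with respect to $\|\cdot\|_E$, ordered so that $\|v_1\|_E \le \dots \le \|v_d\|_E$ and $\|v_i\|_E \asymp_d \lambda_i$, the $i$-th successive minimum of $\Z^d$ with respect to $E$. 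Such a basis exists with the implied constants depending only on $d$ (this is exactly \cite[Lemma 3.36]{TV} or standard geometry of numbers). Now set $N_i = \lfloor 1/\lambda_i \rfloor$ (using that $\lambda_1 \le \dots \le \lambda_d$; if some $\lambda_i > 1$ the corresponding $N_i$ is $0$ and that direction simply does not contribute, which is fine). The box $P(\mathbf v; \mathbf N)$ is then comparable to $E \cap \Z^d$: on one hand every point $\sum n_i v_i$ with $|n_i| \le c_d N_i$ has $\|\cdot\|_E \lesssim_d \sum c_d N_i \lambda_i \lesssim_d 1$, giving $P(\mathbf v; c_d \mathbf N) \subseteq E \cap \Z^d$ after choosing $c_d$ small; on the other hand, writing any $w \in E \cap \Z^d$ in the basis as $w = \sum n_i v_i$, a standard Cramer's-rule/Gram–Schmidt estimate on a reduced basis bounds $|n_i| \lesssim_d \|w\|_E / \lambda_i \le 1/\lambda_i \le N_i + 1 \lesssim_d N_i$ (with the usual convention when $N_i = 0$), giving $E \cap \Z^d \subseteq P(\mathbf v; \mathbf N)$ after enlarging $\mathbf N$ by a dimensional constant. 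Composing with $E \subseteq B \subseteq \sqrt d\, E$ and rescaling the constant completes the proof.

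The main obstacle is the second inclusion, i.e.\ controlling the coordinates $n_i$ of a lattice point $w \in E$ when expanded in the reduced basis: naively the Gram matrix of $\{v_i\}$ could be ill-conditioned, but the defining property of a Minkowski- (or LLL-) reduced basis precisely guarantees that the Gram–Schmidt lengths are each comparable (up to $d$-dependent factors) to the successive minima $\lambda_i$, and that the change-of-basis between the $v_i$ and their Gram–Schmidt orthogonalization is unipotent with bounded entries; this is what makes the coordinate bound $|n_i| \lesssim_d \|w\|_E/\lambda_i$ go through. Since the lemma is only asked for qualitatively (constants depending arbitrarily on $d$), I would not optimize any of these constants and would be content to cite \cite[Lemma 3.36]{TV} for the existence of the reduced basis with the stated comparability, reducing the whole argument to the two short inclusion checks above.
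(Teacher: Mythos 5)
The paper does not give a proof of this lemma: it is stated and used as a black box, attributed to \cite[Lemma 3.36]{TV} (the ``qualitative discrete John's theorem''). Your write-up is therefore doing genuinely more than the paper, and your approach — pass to the continuous John ellipsoid $E \subseteq B \subseteq \sqrt{d}\,E$, then run Minkowski's second theorem / a Minkowski- or LLL-reduced basis of $\mathbb{Z}^d$ with respect to $\|\cdot\|_E$, set $N_i \asymp 1/\lambda_i$, and check the two inclusions — is exactly the standard proof of that Tao–Vu lemma. The inclusion $P(\mathbf v; c_d\mathbf N)\subseteq E\cap\mathbb Z^d$ is immediate; for the other inclusion, the key coordinate bound $|n_i|\lesssim_d \|w\|_E/\lambda_i$ does require the reduced-basis property (bounded $\mu_{ji}$ together with $\|v_i^*\|_E\asymp_d\lambda_i$) and a downward induction on $i$, but this is all classical. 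Two small points worth flagging. First, citing \cite[Lemma 3.36]{TV} in the middle of the argument for ``the existence of the reduced basis with the stated comparability'' is a misattribution bordering on circularity — that lemma \emph{is} the target statement, not the Minkowski/LLL reduction fact you need; you should instead invoke Minkowski's second theorem and basis reduction directly (or cite a geometry-of-numbers source). Second, the step ``$1/\lambda_i \le N_i+1 \lesssim_d N_i$'' fails literally when $N_i=0$; the clean fix is to define $N_i = \lceil C_d/\lambda_i\rceil$ for a suitable dimensional constant $C_d$ from the start (so that $B\cap\mathbb Z^d\subseteq P(\mathbf v;\mathbf N)$ holds exactly), and then absorb the loss into $c_d$ in the lower inclusion, as you already gesture at with ``after enlarging $\mathbf N$ by a dimensional constant.'' With those repairs the argument is a correct self-contained proof of what the paper only cites.
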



\begin{lemma}\label{lem:decrease-dim}
    Consider a symmetric axis-aligned box $B$ in $\mathbb{Z}^d$. Let $A$ be a subset of $B$ with subset sum dimension $d' \le d$. Then we have $|P(A)| \ll_d |B|$.
\end{lemma}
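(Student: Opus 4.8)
The plan is to combine the two structural inputs already available: Lemma \ref{lem:shrink}, which handles the case where the subset sum dimension $d_1$ of $A$ is at least the ambient dimension, and the discrete John's theorem (Lemma \ref{lem:john}), which lets us realize the GAP $P(A)$ of lower dimension $d' \le d$ as (essentially) an axis-aligned box in its own lower-dimensional lattice. The point is that $P(A)$ sits inside $\mathbb{Z}^d$ as a $d'$-dimensional GAP, so it spans a $d'$-dimensional sublattice $\Lambda$; I want to re-coordinatize so that $P(A)$ becomes a box in $\mathbb{Z}^{d'}$ and then compare volumes.

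First I would apply Corollary \ref{cor:struc} to $A$ to obtain the GAP $P=P(A)$ of dimension $d'$ with $\hat A \cup \{0\} \subseteq P$, together with $A' \subseteq \hat A$ of size at most $s = s(A)$ such that $\Sigma(A')$ contains a homogeneous translate of $csP$, with $csP$ proper. Since $csP$ is proper, $|csP| \gg_d s^{d'} |P|$. On the other hand $\Sigma(A') \subseteq \Sigma(B) \subseteq \Sigma(\conv(B) \cap \mathbb{Z}^d)$; more usefully, since $A' \subseteq B$ and $|A'| \le s$, each element of $\Sigma(A')$ is a sum of at most $s$ elements of $B$, so $\Sigma(A') \subseteq sB$ (the $s$-fold dilate of the symmetric box $B$), giving $|\Sigma(A')| \le |sB| \le (2s+1)^d |B| \ll_d s^d |B|$. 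Combining, $s^{d'}|P| \ll_d s^d |B|$, i.e. $|P(A)| = |P| \ll_d s^{d-d'} |B|$.

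This already gives the claim when $d' = d$, but when $d' < d$ the factor $s^{d-d'}$ is in the wrong direction — it is a loss, not a gain. The fix is to apply the bound in the lower-dimensional lattice $\Lambda$ spanned by $q_1,\dots,q_{d'}$ rather than in $\mathbb{Z}^d$. Concretely: let $\Lambda = \langle q_1,\dots,q_{d'}\rangle \subseteq \mathbb{Z}^d$ and let $\psi: \Lambda \to \mathbb{Z}^{d'}$ be the coordinate isomorphism $\psi(\sum n_i q_i) = (n_1,\dots,n_{d'})$. Then $\psi(P)$ is a $d'$-dimensional box in $\mathbb{Z}^{d'}$ of the same cardinality as $P$. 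Now $A \cap \Lambda \supseteq \hat A$ is large (it omits at most $c^{-1}s\log|A|$ points of $A$), and we want a box in $\mathbb{Z}^{d'}$ containing $\psi(\hat A)$ of cardinality $\ll_d |B|$. Apply discrete John's theorem (Lemma \ref{lem:john}) to the symmetric convex body $\conv(B) \cap \Lambda \otimes \mathbb{R}$ intersected with the $d'$-plane spanned by $\Lambda$: since $B$ is a symmetric box in $\mathbb{Z}^d$, its slice by the subspace $\mathbb{R}\Lambda$ is a symmetric convex body, whose integer points $B \cap \Lambda$ satisfy $P(\mathbf v; c_{d'}\mathbf N) \subseteq B \cap \Lambda \subseteq P(\mathbf v; \mathbf N)$ for an integer basis $\mathbf v$ of $\Lambda$ and some $\mathbf N$. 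The key volume comparison is then that $|P(\mathbf v;\mathbf N)| \ll_{d} \operatorname{Vol}_{d'}(\mathbb{R}\Lambda \cap B) / \det(\Lambda) \ll_d |B \cap \Lambda| \le |B|$, using that the central slice of the box $B$ has $(d'$-dimensional$)$ volume at most a constant (depending on $d$) times $|B| \det(\Lambda)^{?}$ — here one must be a little careful, but since $B$ is axis-aligned, $\mathbb{R}\Lambda \cap B$ is a symmetric convex body contained in the full box, and the number of lattice points of $\Lambda$ in it is comparable to $\operatorname{Vol}_{d'}/\det\Lambda$, which is at most $|B|$ up to constants.

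Granting that comparison, $P(A)$, as a $d'$-dimensional GAP containing $\hat A \cup \{0\}$ inside $\Lambda$, has $|P(A)| = |\psi(P)|$; but I also need to re-examine whether $P(A)$ is forced to be small, not just contained in $\Lambda$. The cleaner route, which I would actually pursue, is: having re-coordinatized via $\psi$, the set $\psi(\hat A)$ lives in a $d'$-dimensional box of size $\ll_d |B|$, so I may now apply Lemma \ref{lem:shrink} to $\psi(\hat A)$ viewed inside that box: its subset sum dimension is still $d'$ (subset sums are preserved by the linear map $\psi^{-1}$, hence properness and homogeneity are preserved, so the structure theorem output for $\psi(\hat A)$ has the same dimension $d'$), and Lemma \ref{lem:shrink} with $d_1 = d = d'$ gives $|P(\psi(\hat A))| \ll_d |\text{box}| \le |B|$; finally $P(A)$ and $P(\psi(\hat A))$ agree up to the isomorphism $\psi$, so $|P(A)| \ll_d |B|$ as required. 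I expect the main obstacle to be the bookkeeping in the middle step — ensuring that the central lattice slice of a symmetric axis-aligned box $B$ by the $d'$-plane $\mathbb{R}\Lambda$ contains $\ll_d |B|$ points of $\Lambda$ (equivalently, that $\operatorname{Vol}_{d'}(\mathbb{R}\Lambda \cap B) \le C_d |B| \det(\Lambda)$), and that the subset sum structure transfers correctly under $\psi$; both are routine but need to be stated carefully.
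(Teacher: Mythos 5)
Your approach is essentially the same as the paper's: combine the discrete John's theorem applied to the slice of $B$ by the subspace spanned by $P(A)$ with the properness of $csP(A)$. However, your execution has a gap at the final step, and it is also more roundabout than necessary.

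The paper's proof is direct. Since $P(A)$ is symmetric and contains $0$, it spans a subspace $H$ of dimension at most $d'$. Apply Lemma~\ref{lem:john} to $B\cap H$ to get a GAP $B'$ of dimension $\dim H \le d'$ with $|B'|\ll_d|B|$. As $A'\subset B\cap H\subset B'$ and $|A'|\le s$, we have $\Sigma(A')\subset sB'$, so $|\Sigma(A')|\le|sB'|\ll_d s^{\dim H}|B'|\le s^{d'}|B|$. Combined with $s^{d'}|P(A)|\ll_d|csP(A)|\le|\Sigma(A')|$ (which is the correct inequality you wrote down in your first paragraph, except that you then bounded $|\Sigma(A')|$ by $s^d|B|$ using the full ambient box and lost the $s^{d-d'}$ factor), this gives $|P(A)|\ll_d|B|$. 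The fix you identified --- work in the lattice $\Lambda$ spanned by $P(A)$ --- is exactly right, but instead of plugging $|sB'|\ll_d s^{d'}|B|$ into the inequality you already had, you take a detour through re-applying Lemma~\ref{lem:shrink} to $\psi(\hat A)$.

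That detour introduces a genuine gap: you claim ``$P(A)$ and $P(\psi(\hat A))$ agree up to the isomorphism $\psi$,'' but $P(\cdot)$ is not canonical --- it is a choice made when the structure theorem is invoked, and nothing forces the structure theorem applied to $\psi(\hat A)$ to return $\psi(P(A))$. The claim can be repaired by observing that $\psi(P(A))$ \emph{is} a valid choice for $P(\psi(\hat A))$ (since $\psi$ is a lattice isomorphism onto $\mathbb{Z}^{d'}$, it preserves containment, subset sums, homogeneity, and properness) and then running the proof of Lemma~\ref{lem:shrink} with that specific choice; but this needs to be said explicitly, and once you say it you have essentially unwound the detour back into the paper's one-line computation. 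In short: the ingredients are right, the first paragraph has the correct inequality $s^{d'}|P|\ll|\Sigma(A')|$, and the John's-theorem volume comparison is right; you just need to close the loop directly rather than via a second application of the structure theorem.
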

\begin{proof}
    Let $P = P(A)$ and let $H$ be the subspace spanned by $P$. Note that the dimension of $H$ is at most $d'$. By Lemma \ref{lem:john}, $H\cap B$ is contained in a symmetric GAP $B' = P(\mathbf{v}, \mathbf{N})$ of size $O(|B|)$.

    By Corollary \ref{cor:struc}, we have a subset $A' \subset A$ so that
    \[
    x + cs P \subset \Sigma(A') \subset s B',
    \] 
    where $s=s(A)$, and since $cs P$ is proper and $d'$-dimensional,
    \[
    s^{d'} |P| \ll |cs P| \le |s B'| \ll s^{\dim H}|B|, 
    \]
    which implies $|P|\ll_d |B|$.
\end{proof}

We next introduce the notion of \emph{$(\delta,\gamma)$-irreducibility}, and show that in the argument, every non-averaging $(\ell,\beta)$-set $A$ can be replaced by a large $(\delta,\gamma)$-irreducible set. 

\begin{definition}
    We say that a $(\ell,\beta)$-set $A$ with $P=P(A)$ is $(\delta,\gamma)$-irreducible if for any subset $A'$ of $\phi_P(A)$ of size at least $\delta |A|$ and for any $x \in \phi_P(P)$, the subset sum dimension $d'$ of $A'-x$ is the same as the subset sum dimension of $A$, and furthermore $|P(A'-x)| \ge \gamma |P(A)|$. 
\end{definition}

In the above definition, note that $A'$ is a subset of $\phi_P(P)\subseteq \mathbb{Z}^{d(A)}$. Furthermore, by combining Lemma \ref{lem:shrink} and Lemma \ref{lem:decrease-dim}, we have that 
\[
    |\phi_P(P)|=|P| \ll_{\ell} |B|s(A)^{-\max(0,(d_1-\ell))} \ll |A|^{\beta},
\]
we have that $A'$ is a $(d(A), \beta + o_{|A|}(1))$-set.

\begin{lemma}\label{lem:reduce}
    Let $A$ be a non-averaging subset of a box $B\subseteq \mathbb{Z}^{\ell}$. 
    Assume that $|B| \le |A|^{\beta}$ for some $\beta >0$. Let $\epsilon, \delta, \gamma >0$ and assume that $\epsilon < 1/3$, $\gamma \le \delta^{K}$ for $K \ge C_{\beta,\epsilon}$ sufficiently large in $\beta,\epsilon$ and that $\gamma \ge |A|^{-1/3}$. We can find a non-averaging set $\tilde A$ with subset sum dimension $\tilde d$ and corresponding GAP $\tilde P = P(\tilde A)$ satisfying 
    \begin{align*}
        |\tilde P| &\ll_\beta |A|^{-(1-\epsilon)(\tilde d-\ell)} |B|, \qquad &&|\tilde A| > |A|^{1-\epsilon}, \qquad &&\text{if }~\tilde d > \ell,\\
        |\tilde P| &\ll_\beta \rho^{K} |B|, \qquad &&|\tilde A| = \rho |A| > |A|^{1-\epsilon}, \qquad &&\text{if }~\tilde d = \ell,\\
        |\tilde P| &\ll_\beta |B|, \qquad &&|\tilde A| > |A|^{1-\epsilon}, \qquad &&\text{if }~\tilde d < \ell,
    \end{align*}
    for some $\tilde d=O_\beta(1)$. 
    Furthermore, $\tilde A$ is $(\delta,\gamma)$-irreducible.
\end{lemma}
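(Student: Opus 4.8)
The plan is to build $\tilde A$ by iteratively \emph{reducing} $A$. Put $A_0=A$, $B_0=B$, $\ell_0=\ell$, and maintain the invariant that $A_i$ is a non-averaging subset of an axis-aligned box $B_i\subseteq\mathbb Z^{\ell_i}$ which is an $(\ell_i,\beta_i)$-set with $\ell_i,\beta_i$ bounded in terms of $\ell,\beta$. As long as $A_i$ is not $(\delta,\gamma)$-irreducible, the negation of the definition supplies a \emph{witness}: a subset $C_i\subseteq\phi_{P(A_i)}(A_i)$ with $|C_i|\ge\delta|A_i|$ and a point $x_i\in\phi_{P(A_i)}(P(A_i))$ for which $d(C_i-x_i)\ne d(A_i)$ or $|P(C_i-x_i)|<\gamma|P(A_i)|$. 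Set $A_{i+1}=C_i-x_i$ and $B_{i+1}=\phi_{P(A_i)}(P(A_i))-x_i\subseteq\mathbb Z^{d(A_i)}$, so that $|B_{i+1}|=|P(A_i)|$ and $|A_{i+1}|\ge\delta|A_i|$. Since $\phi_{P(A_i)}(\hat A_i\cup\{0\})$ is non-averaging (the observation recorded just before Lemma~\ref{lem:shrink}), and being non-averaging is preserved under taking subsets and under translation, $A_{i+1}$ is again non-averaging. Call step $i$ a \emph{dimension step} if $d(A_{i+1})\ne d(A_i)$, further an \emph{up} or \emph{down} step according to the sign of $d(A_{i+1})-d(A_i)$; otherwise a \emph{volume step}, in which case $|P(A_{i+1})|<\gamma|P(A_i)|$. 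We run the iteration aiming to keep $|A_i|>|A|^{1-\epsilon/2}$ (so that polylog losses below are absorbed into genuine exponents), stop at the first $i$ for which $A_i$ is $(\delta,\gamma)$-irreducible, and output $\tilde A=A_i$, $\tilde d=d(A_i)$, $\tilde P=P(A_i)$; part of the work is to see this happens with $i$ small.

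The core of the argument is to bound the number of steps, using $\log|P(A_i)|$ as a potential. Three facts drive it. First, by Lemmas~\ref{lem:shrink} and~\ref{lem:decrease-dim}, $|P(A_{i+1})|\ll_\beta|B_{i+1}|=|P(A_i)|$ always, so $\log|P|$ can only increase at \emph{down} steps, and then by at most $O_\beta(1)$. Second, at a \emph{volume} step $\log|P|$ drops by at least $\log(1/\gamma)\ge K\log(1/\delta)$. Third --- this is the point that makes the count go through --- at an \emph{up} step $A_{i+1}$ lies inside the $d(A_i)$-dimensional box $B_{i+1}$ yet has strictly larger subset-sum dimension, so Lemma~\ref{lem:shrink} forces $|P(A_{i+1})|\ll_\beta|P(A_i)|\,s(A_{i+1})^{-(d(A_{i+1})-d(A_i))}$, whence $\log|P|$ drops by at least $\log s(A_{i+1})-O(1)\ge\tfrac14\log|A|$ for $|A|$ large (using $s(A_{i+1})=|A_{i+1}|/(\log|A_{i+1}|)^2$ and $|A_{i+1}|\ge|A|^{1-\epsilon/2}$, which is itself justified by a routine first-time-of-failure argument). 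As $d(A_i)$ is an integer in $[0,d_{\max}]$ with $d_{\max}=O_\beta(1)$, the number of down steps is at most the number $a_+$ of up steps plus $d_{\max}$. Balancing the total change of $\log|P|$ between $\log|P(A_0)|\le(\beta+o(1))\log|A|$ and $\log|P(\tilde A)|\ge0$ yields in turn $a_+=O_\beta(1)$, hence $O_\beta(1)$ dimension steps altogether, and then $b\log(1/\gamma)\le(\beta+o(1))\log|A|+O_\beta(1)$, i.e. at most $b\le(\beta+o(1))\log|A|/(K\log(1/\delta))$ volume steps. I expect this bookkeeping --- in particular recognizing that, although the subset-sum dimension may rise as well as fall along the reduction, a rise is costly enough (it shrinks $|P|$ polynomially) to be rare --- to be the main obstacle.

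Consequently the iteration runs for $T=O_\beta(1)+(\beta+o(1))\log|A|/(K\log(1/\delta))$ steps, and since every step loses a factor $\ge\delta$, for all $i\le T$ we get $|A_i|\ge\delta^T|A|=\delta^{O_\beta(1)}\,|A|^{\,1-(\beta+o(1))/K}$, which exceeds $|A|^{1-\epsilon/2}$ (hence $|A|^{1-\epsilon}$) once $K\ge C_{\beta,\epsilon}$ is taken to be a sufficiently large multiple of $\beta/\epsilon$ and $|A|$ is large; this also closes the first-time-of-failure argument and keeps every $A_i$ large enough, and $\ell_i,\beta_i$ bounded, for Corollary~\ref{cor:struc} and Lemmas~\ref{lem:shrink} and~\ref{lem:decrease-dim} to apply with uniform constants throughout. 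The hypothesis $\gamma\ge|A|^{-1/3}$ keeps $\log(1/\gamma)$, hence $T$ and the total size loss, in the admissible range (and forces $K\lesssim\log|A|/\log(1/\delta)$). By construction $\tilde A$ is $(\delta,\gamma)$-irreducible, and $|\tilde A|=\rho|A|$ with $\rho\ge\delta^T$.

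It remains to bound $|\tilde P|=|P(\tilde A)|$ by telescoping the per-step estimates: down steps multiply $|P|$ by $O_\beta(1)$ (and there are $O_\beta(1)$ of them), volume steps by at most $\gamma$, and up steps by at most $s(A_{i+1})^{-(d(A_{i+1})-d(A_i))}\le S^{-(d(A_{i+1})-d(A_i))}$, where $S=\min_i s(A_i)\ge|A|^{1-\epsilon/2}/(\log|A|)^2$; together with $|P(A_0)|=|P(A)|\ll_\beta|B|\,s(A)^{-\max(0,d(A)-\ell)}\le|B|\,S^{-\max(0,d(A)-\ell)}$ from Lemmas~\ref{lem:shrink} and~\ref{lem:decrease-dim}, this gives
\[
|\tilde P|\ \ll_\beta\ |B|\cdot\gamma^{b}\cdot S^{-\left(\max(0,\,d(A)-\ell)\;+\;\sum_{\text{up steps}}(d(A_{i+1})-d(A_i))\right)}.
\]
Since the sum over up steps is at least the net change $\tilde d-d(A)$ and at least $0$, the exponent is at least $\max(0,\tilde d-\ell)$. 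When $\tilde d>\ell$ this gives $|\tilde P|\ll_\beta S^{-(\tilde d-\ell)}|B|$, and because $S\ge|A|^{1-\epsilon/2-o(1)}$ while $\tilde d-\ell=O_\beta(1)$, the exponent absorbs to $|\tilde P|\ll_\beta|A|^{-(1-\epsilon)(\tilde d-\ell)}|B|$. When $\tilde d\le\ell$ the exponent is nonnegative, so $|\tilde P|\ll_\beta\gamma^b|B|\ll_\beta|B|$; and in the case $\tilde d=\ell$ one improves this using $\gamma^b\ll_\beta\rho^K$ --- which follows from $\rho\ge\delta^T=\delta^{b+O_\beta(1)}$ and $\gamma\le\delta^K$, so $\rho^K\ge\delta^{Kb}\delta^{O_\beta(1)}\ge\gamma^b\delta^{O_\beta(1)}$ --- obtaining $|\tilde P|\ll_\beta\rho^K|B|$. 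This yields all three claimed estimates and completes the plan.
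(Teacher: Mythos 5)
Your overall plan is the same as the paper's: an iteration of down/up/volume (shrink) moves, a potential argument on $\log|P(A_i)|$ to bound the number of dimension steps by $O_\beta(1)$ and the number of volume steps $b$ by $\approx \beta\log|A|/(K\log(1/\delta))$, a first-failure argument to keep $|A_i|\ge|A|^{1-\epsilon/2}$, and a telescoping of the per-step estimates from Lemmas~\ref{lem:shrink} and~\ref{lem:decrease-dim}. The cases $\tilde d>\ell$ and $\tilde d<\ell$ are handled correctly.

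There is a genuine gap in the case $\tilde d=\ell$. Your telescoping gives
$|\tilde P|\ll_\beta\gamma^{b}\,S^{-E}\,|B|$ with $E=\max(0,d(A)-\ell)+\sum_{\text{up}}(d_{i+1}-d_i)\ge 0$, and you then drop $S^{-E}$ to get $|\tilde P|\ll_\beta\gamma^b|B|$, hoping to finish with ``$\gamma^b\ll_\beta\rho^K$.'' That inequality is false in general. What you actually derive is $\rho^K\ge\delta^{K(b+O_\beta(1))}\ge\gamma^b\,\delta^{O_\beta(K)}$, i.e. $\gamma^b\le\rho^K\delta^{-O_\beta(K)}$, and the correction $\delta^{-O_\beta(K)}$ is not $O_\beta(1)$: under the stated hypotheses it can be as large as $\gamma^{-O_\beta(1)}\le|A|^{O_\beta(1)}$. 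What saves the case $\tilde d=\ell$ is precisely the $S^{-E}$ factor you threw away. The paper keeps it, observes that when $d_i=\ell$ one has $|\mathcal U|+|\mathcal D|\le 2E$ (using $|\mathcal U|\le\sum_{\text{up}}(d_{j+1}-d_j)$ and $|\mathcal D|\le d_0-\ell+\sum_{\text{up}}(d_{j+1}-d_j)$), and then uses $\gamma\ge|A|^{-1/3}$ together with $\epsilon<1/3$ to convert $S^{-E}\le|A|^{-(1-\epsilon)E}\le\gamma^{|\mathcal U|+|\mathcal D|}$. This upgrades your bound to $|\tilde P|\ll_\beta\gamma^{b+|\mathcal U|+|\mathcal D|}|B|=\gamma^{T}|B|$, and now $\gamma^{T}\le(\delta^{K})^{T}=(\delta^{T})^{K}\le\rho^{K}$ holds with no loss. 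In short, the hypothesis $\gamma\ge|A|^{-1/3}$ is not just bookkeeping to keep $T$ small; it is exactly what lets the dimension-step cost be paid in powers of $\gamma$, and your proof never uses it for that purpose.

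A smaller imprecision: you write ``the number of down steps is at most the number of up steps plus $d_{\max}$,'' but since an up step may raise the dimension by more than~$1$, the correct bound is in terms of the total increase $\sum_{\text{up}}(d_{i+1}-d_i)$ rather than the count of up steps; this does not affect the $O_\beta(1)$ conclusion, but you should phrase it the other way to avoid implicitly assuming each dimension step moves by exactly one.
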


\begin{proof}
    We are going to construct the set $\tilde A$ via a series of moves which we call {\em down-move}, {\em up-move} and {\em shrink-move}. 
    Let $d = d(A)$ be the subset sum dimension of $A$ and $P = P(A)$ the $d$-bounding box of $\hat A \cup \{0\}$, where $\hat A \subset A$ is a set of size at least $|A| - c^{-1}s(A) \log |A|$ given by the definition of $d(A)$. 
    If $d > \ell$ then by Lemma \ref{lem:shrink} we have $|P| \ll s(A)^{-(d-\ell)} |B|$. If $d \le \ell$ then by Lemma \ref{lem:decrease-dim} we have $|P| \ll |B|$.
    
    Consider the set $\tilde{A} := \phi_P(\hat A)$ which is a subset of the box $\phi_P(P) \subset \mathbb{Z}^d$. Let $\tilde{d}$ be the subset sum dimension of $\tilde{A}$. We now consider three different ways in which the $(\delta,\gamma)$-irreducibility may be violated and in each of the cases replace the set $A$ with a certain shift of a subset of $\tilde A$. 
    
    Assume that $\tilde{A}$ has a subset $A_1$ with $|A_1| > \delta |A|$ and $A_1-x$ has subset sum dimension $d_1 < d$, for some $x \in \phi_P(P)$ (in particular, this happens when $\tilde{d} < d$). Then by Lemma \ref{lem:decrease-dim} we have $|P(A_1-x)| \ll |P|$. In this case, we say that $A_1-x$ is obtained from $A$ by a down-move.
    
    Assume that $\tilde{A}$ has a subset $A_1$ with $|A_1| > \delta |A|$ and $A_1-x$ has subset sum dimension $d_1 > d$ for some $x \in \phi_P(P)$ (in particular, this happens when $\tilde{d} > d$). By Lemma \ref{lem:shrink}, we have $|P(A_1-x))| \ll_d s(A_1)^{-(d_1-d)}|P|$. 
    In this case, we say that $A_1-x$ is obtained from $A$ by an up-move.

    Finally, consider the case when $\tilde{A}$ does not have any subset $A_1$ with $|A_1|>\delta |A|$ and $A_1-x$ has subset sum dimension $d_1\ne d$, but $\tilde{A}$ has a subset $A_1$ with $|A_1| > \delta |A|$ and $A_1-x$ has the same dimension $d_1 = d$ and a small bounding box $|P(A_1-x)| < \gamma |P|$. In this case, we say that $A_1-x$ is obtained from $A$ by a shrink-move.

    We will sequentially apply the down-move, up-move and shrink-move and replace $A$ by $A_1-x$ until we can no longer apply any move. In particular, we will stop the procedure when one of the following conditions holds: if the new set $A_1-x$ has size at most $|A|^{1-\epsilon/2}$, or if the resulting dimension $d_1$ exceeds an appropriate threshold $\overline{d}$ to be later chosen or, otherwise, if none of the 3 moves can be applied to $A$.
    
    Let $A_j$, $j=0, \ldots, i$, be the resulting sequence of sets starting with $A_0 = A$. Let $d_j$ be the subset sum dimension of $A_j$ and write $P_j = P(A_j)$. Note that all moves preserve the property of being non-averaging and hence $A_j$ is a non-averaging set for each $j\le i$. 
    
    Let $\mathcal{D}, \,\mathcal{U},\,\mathcal{S}$ be the set of down-moves, up-moves, shrink-moves respectively. We then have the following bound
    \begin{equation}\label{eq:upp-P}
        |P_i| < C^{|\mathcal{D}|+|\mathcal{U}|} \gamma^{|\mathcal{S}|} \left(\prod_{j\in \mathcal{U}} s(A_j)^{-(d_{j+1}-d_j)}\right) |P|.
    \end{equation}
    Here, $C$ is a constant that depends on $\max_j d_j$. 
    
    Furthermore, $d_i - d_0\le \sum_{j\in \mathcal{U}}(d_{j+1}-d_j) - |\mathcal{D}|$, and $|A_i| > \delta^i |A|$. Also note that $|\mathcal{S}|= i - |\mathcal{D}|-|\mathcal{U}|$, and $|P| < |A|^{O_{\beta}(1)}$. 
    
    We have $|A_i| > |A|^{1-\epsilon/2}$ and $d_i \le \overline{d}$ by design and so $s(A_j) \ge s(A_i) \ge |A|^{1-\epsilon}$ for sufficiently large $|A|$ and $C \le C_{\overline{d}}$, so (\ref{eq:upp-P}) implies that 
    \[
        |A|^{(1-\epsilon) \sum_{j\in \mathcal{U}} (d_{j+1}-d_j)} \le C^{|\mathcal D| + |\mathcal U|} |P| \le C_{\overline{d}}^{|\mathcal D| + |\mathcal U|} |P|.
    \]
    We have that $|\mathcal{D}| \le d_0-d_i + \sum_{j\in \mathcal{U}} (d_{j+1}-d_j) \le d_0 + \sum_{j\in \mathcal{U}} (d_{j+1}-d_j)$, and $|\mathcal{U}| \le \sum_{j\in \mathcal{U}} (d_{j+1}-d_j)$. Thus, 
    \[
        |A|^{(1-\epsilon) \sum_{j\in \mathcal{U}} (d_{j+1}-d_j)} \le C_{\overline{d}}^{d_0 + 2\sum_{j\in \mathcal{U}} (d_{j+1}-d_j)} |P|.
    \]
    Assuming $|A| > C_{\overline{d}}^{10d}$ and $\epsilon<1/2$, recalling that $|P| < |A|^{O_\beta(1)}$, it follows that 
    \begin{equation}\label{eq:Obeta}
    \max(d_i-d_0+|\mathcal D|, |\mathcal{U}|) \le \sum_{j\in \mathcal{U}} (d_{j+1}-d_j) = O_\beta(1).    
    \end{equation}
    We take $\overline{d}$ sufficiently large in $\beta$ so the threshold $d_j > \overline{d}$ is never reached by (\ref{eq:Obeta}). So (\ref{eq:upp-P}) now gives
    \[
    |P_i| \ll_\beta \gamma^{|\mathcal S|} |A|^{- (1-\epsilon) \sum_{j\in \mathcal U} (d_{j+1}-d_j)} |P|,
    \]
    i.e. 
    \begin{equation}\label{eq:upP2}
    |P_i| \ll_\beta \gamma^{i - |\mathcal U| -|\mathcal D|} |A|^{- (1-\epsilon) \sum_{j\in \mathcal U} (d_{j+1}-d_j)} |P|.
    \end{equation}
    Now suppose that the procedure stops because the next set $A_{i+1}$ has size less than $|A|^{1-\epsilon/2}$. It follows that 
    \[
    \delta^{i+1} |A| \le |A_{i+1}| \le |A|^{1-\epsilon/2}
    \]
    and so $\delta^{i+1} \le |A|^{-\epsilon/2}$. So if we take $\gamma \le \delta^{C_{\beta,\epsilon}}$ for large enough $C_{\beta,\epsilon}$ then we get $\gamma^{i-O_\beta(1)} \le |P|^{-10}$ contradicting (\ref{eq:upP2}). Thus, with the above choices of the parameters, the procedure cannot stop due to $|A_{i}|$ getting too small. Hence, it can only stop when none of the up-move, down-move or shrink-move can be applied to $A_i$. In particular, $A_i$ is $(\delta, \gamma)$-irreducible. Furthermore, we have $|A_i| \ge |A|^{1-\epsilon}$ and 
    \[
    |P_i| \ll_\beta |A|^{-(1-\epsilon)\sum_{j\in \mathcal U} (d_{j+1}-d_j)} |P|.
    \]
    
    Recall that we also have $|P| \ll |A|^{-(1-\epsilon) \max(0, d-\ell)} |B|$. If $d_i > \ell$ then this gives
    \[
        |P_i| \ll_\beta |A|^{-(1-\epsilon) (d_i - \ell)} |B|.
    \]
    If $d_i < \ell$ then this gives 
    \[
    |P_i| \ll_\beta |B|.
    \]
    If $d_i = \ell$ then using the assumptions $\gamma \ge |A|^{-1/3}$ and $\epsilon < 1/3$, (\ref{eq:upP2}) gives
    \[
    |P_i| \ll_\beta \gamma^{i} |B|
    \]
    which together with $|A_i| \ge \delta^i |A|$ and $\gamma \le \delta^K$ gives 
    \[
    |P_i| \ll_\beta \rho^K |B|, ~~\rho = |A_i|/|A|.
    \]
    Taking $\tilde A = A_i$ and $\tilde P = P_i$ concludes the proof.
\end{proof} 

\subsection{Intersecting subset sums of an irreducible set}

Recall that a set $A \subset \R^d$ is in {\em $\delta$-convex position} if for any $a \in A$ there is a half-space $H^+$ containing $a$ and such that $|A \cap H^+| \le \delta |A|$.

\begin{theorem}\label{thm:int-sums}
    Let $\ell, d\ge 1$ and $\beta >0$.
    Let $\gamma, \delta, \mu < 1$ be such that $\gamma \le \delta^{C}$ and $\delta \le \mu^C$ and assume that $\gamma \ge (\log |A|)^{-1/C'}$ for sufficiently large $C = C_{\beta, \ell}$ and $C' = C'_{\beta,\ell}$. Suppose that $A \subset \Z^\ell$ be a subset of a box $B$ with $|B| \le |A|^{\beta}$ with subset sum dimension $d$ and let $P = P(A)$ and $\hat A$ be the set given by Corollary \ref{cor:struc}.
    Suppose that $A$ is $(\delta,\gamma)$-irreducible and $\phi_P(\hat A)$ is not in $\mu$-convex position. Then for sufficiently large $|A|$ there is $a\in A$ and nonempty disjoint $\tilde{A}_1,\tilde{A}_2\subset A\setminus \{a\}$ such that 
    \begin{equation}\label{eq:thmintsums}
        \sum_{a_1 \in \tilde{A}_1} (a_1-a) = \sum_{a_2 \in \tilde{A}_2} (a-a_2). 
    \end{equation}
\end{theorem}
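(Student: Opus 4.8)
The plan is to pass to the coordinate picture $\phi_P(\hat A)\subseteq\mathbb{Z}^{d}$, locate a ``deep'' point there, split the set cleverly into two halves, feed each half to the structure theorem, and then show the two resulting dilated generalized progressions are forced to intersect because their convex hulls overlap around the deep point. Throughout, $(\delta,\gamma)$-irreducibility is what lets us apply the structure theorem repeatedly and keep control of the GAPs.

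\textbf{Reduction to $\mathbb{Z}^{d}$ and the deep point.} Write $M:=\phi_P(\hat A)\subseteq\phi_P(P)\subseteq\mathbb{Z}^{d}$ and let $\Lambda\colon\mathbb{Z}^{d}\to\mathbb{Z}^{\ell}$ be the linear map sending the $i$-th basis vector to the $i$-th generator of the proper GAP $P$; then $\Lambda(\phi_P(x)-\phi_P(y))=x-y$ for all $x,y\in P$. Consequently, if we can find $a^{\ast}\in M$ and \emph{nonempty, disjoint} $T_1,T_2\subseteq M\setminus\{a^{\ast}\}$ together with a common value $v\neq 0$ in $\Sigma(T_1-a^{\ast})\cap\Sigma(a^{\ast}-T_2)$, then applying $\Lambda$ and using injectivity of $\phi_P$ on $\hat A$ yields exactly \eqref{eq:thmintsums} for the corresponding $\tilde A_1,\tilde A_2\subseteq\hat A\setminus\{a\}\subseteq A\setminus\{a\}$. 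So it suffices to produce such $a^{\ast},T_1,T_2,v$ inside $M$. Since $M$ is not in $\mu$-convex position, fix $a^{\ast}\in M$ for which every closed half-space containing $a^{\ast}$ contains more than $\mu|M|$ points of $M$; translating (legitimate because $a^{\ast}\in\phi_P(P)$, and $(\delta,\gamma)$-irreducibility is stated precisely for shifts $x\in\phi_P(P)$) we may assume $a^{\ast}=0$, so that $0\in M$ is a $\mu$-deep point of $M$.

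\textbf{A balanced split and the structure theorem.} Partition $M\setminus\{0\}=V_1\sqcup V_2$ uniformly at random. By a Chernoff bound and a union bound over the $O(|M|^{d})$ combinatorially distinct closed half-spaces through $0$, with positive probability we simultaneously get $|V_i|\ge|M|/3$ and: every closed half-space through $0$ meets $V_i$ in at least $\tfrac{\mu}{2}|V_i|$ points, for $i=1,2$. Fix such a partition; in particular $|V_i|\gg\delta|A|$, so $(\delta,\gamma)$-irreducibility applies to $V_1$ and $V_2$ with shift $a^{\ast}=0$. Apply Corollary~\ref{cor:struc} to $V_1$ and to $-V_2=a^{\ast}-V_2$: we obtain GAPs $P_1\supseteq\widehat{V_1}\cup\{0\}$ and $P_2\supseteq\widehat{-V_2}\cup\{0\}$, symmetric, together with $U_i$ contained in the hatted sets with $|U_i|\le s_i:=|V_i|/\log^{2}|V_i|$, such that $\Sigma(U_1)\supseteq cs_1P_1+z_1$ and $\Sigma(U_2)\supseteq cs_2P_2+z_2$ (homogeneous translates, each $cs_iP_i$ proper). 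By $(\delta,\gamma)$-irreducibility each $P_i$ has dimension exactly $d$ and $|P_i|\ge\gamma|P|$, and, crucially, $P_i$ is non-degenerate: were its generators to span only a $k<d$ dimensional subspace, the argument of Lemma~\ref{lem:decrease-dim} (applied in the box $\phi_P(P)-a^{\ast}$ of size $|P|$, which contains $V_i-a^{\ast}$) would give $|P_i|\ll s_i^{\,k-d}|P|\le s_i^{-1}|P|$, contradicting $|P_i|\ge\gamma|P|$ since $\gamma\gg s_i^{-1}$. Thus the $d$ generators of $P_i$ are independent, $\operatorname{conv}(cs_iP_i)$ is a full-dimensional parallelepiped, and $cs_iP_i$ is exactly the set of lattice points of its convex hull in the lattice generated by those generators.

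\textbf{Intersecting the two progressions (the crux).} Since $U_i$ lies in $\widehat{V_1}$ (resp. $\widehat{-V_2}$), we have $\Sigma(U_i)\subseteq s_iK_i$ with $K_1=\operatorname{conv}(\widehat{V_1}\cup\{0\})$ and $K_2=\operatorname{conv}(\widehat{-V_2}\cup\{0\})$, both lying in a box of width $O(W)$ where $W$ is the width of the box containing $M$. Because $cs_iP_i+z_i\subseteq s_iK_i$ has diameter $O(s_iW)$, the parallelepiped $\operatorname{conv}(P_i)$ has diameter $O(W)$; combining this with $\operatorname{Vol}(\operatorname{conv}(P_i))\ge|P_i|/2^{d}\ge\gamma|P|/2^{d}$ and the central symmetry of $P_i$ yields $\operatorname{conv}(P_i)\supseteq B(0,\rho_i)$ with $\rho_i\gtrsim_d\gamma|P|/W^{d-1}$, hence $\operatorname{conv}(cs_iP_i+z_i)\supseteq B(z_i,cs_i\rho_i)$ with the containment $cs_iP_i+z_i\subseteq s_iK_i$ forcing $z_i$ to lie deep inside $s_iK_i$. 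On the other hand the $\tfrac{\mu}{2}$-deepness of $0$ in $V_i$, which survives the removal of the $o(|V_i|)$ points needed to pass to $\widehat{V_i}$, places $0$ in the interior of $K_1\cap K_2$, so this overlap contains a ball $B(0,\rho_0)$ with $\rho_0$ bounded below in terms of $\mu,d,W$ (a fat-simplex / lattice-volume estimate). A volume and lattice-point count in the common region $s\cdot B(0,\rho_0)$ — using that each $cs_iP_i+z_i$ fills its convex hull cleanly — then forces $cs_1P_1+z_1$ and $cs_2P_2+z_2$ to share a point, and by translating slightly within the GAPs, a common \emph{nonzero} point $v$. Unwinding $v\in\Sigma(U_1)\cap\Sigma(U_2)$ produces nonempty disjoint $\tilde A_1\subseteq V_1$ and $\tilde A_2\subseteq V_2$ with $\sum_{\tilde A_1}(a_1-a^{\ast})=v=\sum_{\tilde A_2}(a^{\ast}-a_2)$ in $\mathbb{Z}^{d}$; pushing forward by $\Lambda$ gives \eqref{eq:thmintsums}.

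\textbf{Where the difficulty lies.} The last step is the delicate one. The structure theorem only gives \emph{translates} $cs_iP_i+z_i$ with $z_i$ a priori uncontrolled, so one must genuinely argue that two translated, dilated, full-dimensional proper GAPs confined to cones $s_iK_i$ whose convex hulls overlap in a fixed neighborhood of the common deep point $0$ must intersect nontrivially. This needs both the non-degeneracy of the $P_i$ (so the GAPs do not hide in a thin flat inside an otherwise large cone) and a quantitative ``overlapping hulls $\Rightarrow$ intersecting progressions'' lemma that tracks the deep point through the structure theorem; it is at this point that the hypotheses $\gamma\le\delta^{C}$, $\delta\le\mu^{C}$ and $\gamma\gg(\log|A|)^{-1/C'}$ are consumed, since they are exactly what guarantee the overlap region is large compared to every error term introduced by passing to $\widehat{V_i}$, $U_i$, and the translates.
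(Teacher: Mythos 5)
Your setup (reduction to $\mathbb{Z}^d$ via $\phi_P$, locating a $\mu$-deep point, splitting $M$ into two halves, applying the structure theorem to each half, and using $(\delta,\gamma)$-irreducibility to control the dimensions and sizes of the resulting GAPs $P_1,P_2$) is aligned with the paper's strategy. But the proof as written has a genuine gap at exactly the place you flag as ``the crux'': you never actually show that the two translated, dilated GAPs $cs_1P_1+z_1$ and $cs_2P_2+z_2$ must intersect, and the mechanism you gesture at does not close.

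The issue is that the containments $cs_iP_i+z_i\subseteq s_iK_i$ give essentially no control over the translates $z_i$. Since $q_i\in s_iP_i$, the translated GAP could sit anywhere inside a set comparable in size to $s_iK_i$; knowing that $K_1\cap K_2$ contains a ball around $0$ does not by itself force $cs_1P_1+z_1$ and $cs_2P_2+z_2$ to overlap, because each could occupy a region far from $0$ inside its own cone. A ``volume and lattice-point count'' cannot work without first pinning the GAPs to a \emph{common} neighborhood and then invoking something about the \emph{common} lattice $\langle P_1\rangle\cap\langle P_2\rangle$, neither of which you establish.

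The paper resolves this with two ideas you do not have. First, instead of a random split, it takes the coefficients $c_x\in[0,\mu^{-1}/|M|]$ of a convex combination $\sum c_x(x-a)=0$ (these exist precisely because $a$ is $\mu$-deep), orders $M\setminus\{a\}$ by $c_x$, and splits alternately into $A_1,A_2$. This deterministic split guarantees $z_1=z_2$ exactly for the weighted centers $z_i=\mu|A|\sum_{x\in A_i}c_x(x-a)$, which is what anchors the two approximating regions to the same point; a random split does not give this. Second, the paper does not just use the GAP containment $q_i+cs_iP_i\subseteq\Sigma(A_i')$; it combines the zonotope structure of subset sums (Lemma \ref{lem:sampling}, approximating any point of $\mathcal Z_{\bar A_i}$ by a subset sum with error absorbed by $(cs_i/2)P_i$) with Lemma \ref{lem:zonotope} (showing $\mathcal Z_{A_i-a}$ contains a translate $\bar z_i+\xi|A|B$ of a fat box, centered near $z_i$) to conclude that $\Sigma(A_i-a)$ contains the whole intersection of $\langle P_i\rangle$ with a ball of radius $\xi'|A|$ around $z_1=z_2$. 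Then, and only then, does the finishing touch come: $\langle P_1\rangle\cap\langle P_2\rangle$ is a lattice of bounded covolume in $\langle B\rangle$ (Lemma \ref{lem:gap-grid}), so the two dense subsets of lattices inside a common ball must share a nonzero point. Without the zonotope sampling argument and the exactly balanced split, your final step is a sketch of what would need to be true rather than a proof of it.

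A secondary remark: the claim that ``$cs_iP_i$ is exactly the set of lattice points of its convex hull in the lattice generated by its generators'' is not a direct consequence of properness as stated — properness of $cs_iP_i$ says the defining representation is injective, not that $cs_iP_i$ fills every lattice point of its convex hull — and the non-degeneracy argument by appeal to Lemma \ref{lem:decrease-dim} needs care, since that lemma speaks about subset sum dimension relative to the ambient dimension, not rank of generators. These would need to be patched, but they are minor compared to the missing intersection mechanism.
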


In the remainder of this subsection, we will always assume the setup of Theorem \ref{thm:int-sums}. Let $A_0 = \phi_P(\hat A)$ and $B = \phi_P(P)$.

For $a\in A_0$ such that there is no half-space $H^+$ containing $a$ satisfying $|A_0\cap H^+|\le \mu |A_0|$, we claim that there exists a convex combination $\sum_{x\in A_0} c_x(x-a) = 0$ for some choice of coefficients $0\le c_x \le \mu^{-1}/|A_0|, \sum_x c_x = 1$. Indeed, assume otherwise, noting that the set of points $\{\sum_{x\in A_0} c_x(x-a): 0\le c_x\le \mu^{-1}/|A_0|, \sum_x c_x=1\}$ is convex, we have a separating hyperplane defined by a non-zero vector $u \in \R^d$:
\begin{equation}\label{eq:duality}
\max_{c_x\in \left[0,\mu^{-1}/ |A_0|\right],~ \sum c_x=1} c_x (x-a)\cdot u < 0.    
\end{equation}
But if there are $m \ge \mu |A_0|$ many $x \in A_0$ satisfying $(x-a)\cdot u \ge 0$ then we can assign $c_x = \frac{1}{m}$ to those $x$ and 0 elsewhere, contradicting (\ref{eq:duality}).

Consider a convex combination $\sum_{x\in A_0}c_x(x-a)=0$ where $c_x\in [0,\mu^{-1}/|A_0|]$. Order the elements in $A_0 \setminus \{a\}$ as $a_1,\dots,a_m$ (where $m=|A_0|-1$) in increasing order of the coefficients $c_x$. We then let 
\begin{equation}\label{eq:defA12}
    A_1 = \{a_i: i=1\pmod 2\}, \qquad A_2 = \{a_i: i=0\pmod 2\}.
\end{equation}

\begin{lemma}\label{lem:gap-grid}
    Let $d = d(A) \ge 1$ and $A_1, A_2 \subseteq \mathbb{Z}^{d}$ be as above. The sets $A_1-a$ and $a-A_2$ have the subset sum dimension $d$. Let $P_1 = P(A_1-a)$ and $P_2 = P(a-A_2)$ for $i=1,2$. Then, for $i=1,2$,  $P_i$ is contained in a translate of $C B$, $P_i$ has size at least $|P_i|\ge \gamma |B|$, and, $\dim(P_i) = \dim(B) = d$. Here $C$ is a constant depending on $d$. 
\end{lemma}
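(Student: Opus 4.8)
The plan is to apply the $(\delta,\gamma)$-irreducibility of $A$ directly, once we check that the sets $A_1-a$ and $a-A_2$ qualify as the ``$A'-x$'' in that definition. First I would observe that $A_1,A_2$ partition $A_0\setminus\{a\}=\phi_P(\hat A)\setminus\{a\}$ into two halves of nearly equal size, so each has size $\ge (|A_0|-1)/2 \ge \tfrac13|\hat A| \ge \tfrac13|A|(1-o(1))$; in particular, for $|A|$ large, $|A_1|,|A_2|\ge \delta|A|$ since $\delta<1$ is a fixed constant (here one uses $\delta\le\mu^C$ with $\mu$ fixed, and that $|\hat A|\ge |A|-c^{-1}|A|/\log|A|$ from Corollary~\ref{cor:struc}). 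Both $A_1$ and $a-A_2$ (equivalently $A_2$, up to the sign flip $x\mapsto -x$, which preserves subset sum dimension and the size of the bounding GAP) are subsets of $\phi_P(P)=B\subseteq\mathbb{Z}^{d}$. Then translating by $a\in\phi_P(P)=B$ (so $a=x$ in the irreducibility definition), $(\delta,\gamma)$-irreducibility of $A$ immediately gives that $A_1-a$ and $A_2-a$, hence $a-A_2$, have subset sum dimension exactly $d=d(A)$, and $|P(A_1-a)|,|P(a-A_2)| \ge \gamma|P(A)| = \gamma|P| = \gamma|B|$ (using $|\phi_P(P)|=|P|=|B|$ as noted after Corollary~\ref{cor:struc}, where $B$ here denotes $\phi_P(P)$).

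For the containment in a translate of $CB$: since $A_1-a \subseteq B - a$ and $B=\phi_P(P)$ is an axis-aligned box, $B-a$ is a translate of a box contained in $2B'$ where $B'$ is the symmetrized box, so $A_1-a$ sits inside a symmetric box of size $O_d(|B|)$. I would then apply Lemma~\ref{lem:decrease-dim} in the case the dimension drops, or rather note that $P(A_1-a)$, being the bounding GAP from Corollary~\ref{cor:struc} applied to $A_1-a\cup\{0\}$, is $d$-dimensional with $d=\dim B$; combining the lower bound $|c\,s\,P(A_1-a)|\gg s^{d}|P(A_1-a)|$ (properness) with the upper bound $|c\,s\,P(A_1-a)|\le |\Sigma(A_1-a)| \le |s\,(B-a)| \ll s^{d}|B|$ yields $|P(A_1-a)|\ll_d |B|$, and then a discrete John-type argument (Lemma~\ref{lem:john}) places the $d$-dimensional GAP $P(A_1-a)$, which contains $0$ and has volume comparable to $|B|$, inside a translate of $CB$ for a constant $C=C(d)$; the same applies verbatim to $a-A_2$. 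Finally $\dim(P_i)=d$ because the subset sum dimension was shown to equal $d=\dim(B)$, and a GAP realizing subset sum dimension $d$ must be $d$-dimensional.

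The only place requiring care — and the main (mild) obstacle — is verifying that $A_1-a$ and $a-A_2$ are legitimate instances of the configuration ``$A'-x$ with $A'\subseteq\phi_P(A)$ of size $\ge\delta|A|$ and $x\in\phi_P(P)$'' appearing in the definition of $(\delta,\gamma)$-irreducibility: one needs the size lower bound $|A_i|\ge\delta|A|$ to survive the loss of $|A\setminus\hat A|=O(|A|/\log|A|)$ elements and the halving, which is where the hypotheses $\delta\le\mu^C$ (so $\delta$ is a small but fixed constant, in particular $\delta<\tfrac14$) and ``$|A|$ sufficiently large'' are used; for $a-A_2$ one additionally needs that negation does not affect subset sum dimension or the bounding GAP size, which is immediate since $x\mapsto -x$ is a lattice automorphism mapping any GAP to a GAP of the same volume and dimension and sending homogeneous translates to homogeneous translates. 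Once these bookkeeping points are settled, the conclusion is a direct transcription of irreducibility together with Lemmas~\ref{lem:shrink}, \ref{lem:john}, and~\ref{lem:decrease-dim}.
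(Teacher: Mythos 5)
Your treatment of the first two conclusions (subset-sum dimension $d$ and $|P_i|\ge \gamma|B|$, hence $\dim(P_i)=d$) via $(\delta,\gamma)$-irreducibility is correct and is essentially what the paper does; the size bound $|A_i|\ge |A_0|/2-1>|A|/4>\delta|A|$ and the negation-invariance observation are exactly the bookkeeping the paper carries out.

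However, the argument for the containment $P_i\subset$ (translate of) $CB$ has a genuine gap. You derive only the volume comparison $|P(A_1-a)|\ll_d|B|$ and then assert that ``a discrete John-type argument places the $d$-dimensional GAP $P(A_1-a)$, which contains $0$ and has volume comparable to $|B|$, inside a translate of $CB$.'' Volume comparability, even together with $0\in P_1$ and $P_1$ being genuinely $d$-dimensional, does \emph{not} imply such a containment: a $d$-dimensional GAP of size $\asymp|B|$ can have a long, thin bounding region and fail to fit inside any bounded dilate of $B$ (for instance a GAP in $\mathbb{Z}^2$ with generators $(1,0)$ and $(|B|^{1/2},1)$ and both widths $|B|^{1/2}$ has $\asymp|B|$ elements but bounding box $\asymp|B|\times|B|^{1/2}$, which is not contained in $O(1)\cdot B$ when $B$ is a $|B|^{1/2}\times|B|^{1/2}$ square). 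The paper avoids this by extracting more than a volume bound: since $A_1'\cup\{0\}\subset P_1\cap(B-a)$, one has $\Sigma(A_1')\subset$ a translate of $s(A_1)\bigl(P_1\cap(B-a)\bigr)$, and comparing this with $cs(A_1)P_1\subset\Sigma(A_1')$ (using properness of $cs(A_1)P_1$) yields the \emph{overlap} estimate $|P_1|\ll_d |P_1\cap(B-a)|$, i.e.\ a constant fraction of the GAP $P_1$ lies inside the box $B-a$. That overlap, not the raw volume count, is what forces $P_1$ into a translate of $CB$. Your write-up also conflates $\Sigma(A_1-a)$ with $\Sigma(A_1')$ in the display $|csP(A_1-a)|\le|\Sigma(A_1-a)|\le|s(B-a)|$ (the second inequality only holds with $A_1'$, a subset of size at most $s$), though this is easily fixed; the real issue is the missing overlap step.
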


\begin{proof}
    We have that $|A_1|, |A_2| \ge |A_0|/2 - 1 > |A|/4$. For $\delta < 1/4$, using that $A$ is $(\delta, \gamma)$-irreducible, we have that the subset sum dimension of $A_1-a$ is equal to $d$ so $\dim(P_1)=\dim(B)=d$, and the box $P_1 = P(A_1-a)$ has size at least $\gamma |B|$. Similarly for $A_2$ (noting that $A_2-a$ and $a-A_2$ have the same subset sum dimension).

    Next we show that $P_i$ is contained in a translate of $CB$ for a constant $C$ depending on $d$. Consider $i=1$ (the case $i=2$ is similar). The set $\{0\}\cup (A_1-a)$ is clearly contained in a translate $B-a$ of $B$. %
    Recall that there is a subset $A_1'$ of $A_1-a$ of size $s(A_1)$ with $\Sigma(A_1')$ containing a translate of $(cs(A_1)) P_1$. On the other hand, $\Sigma(A_1')$ is contained in a translate of $s(A_1) (P_1 \cap (B-a))$. Hence, $|s(A_1) (P_1 \cap (B-a))| \ge |(cs(A_1)) P_1|$, and thus $|P_1| \ll_d |P_1 \cap (B-a)|$. %
    The last inequality implies that there is a constant $C$ depending on $d$ such that $P_1$ is contained in a translate of $CB$, as claimed. 
\end{proof}

We recall some useful results in \cite{conlon2023homogeneous}.

\begin{definition}
Given a finite subset $A$ of $\mathbb{Z}^{d}$, we define the zonotope ${\mathcal Z}_{A}$ to be the Minkowski sum of the segments $[0,1]\cdot a$ with $a\in A$. %
\end{definition}

The following lemma was proved in \cite{conlon2023homogeneous}. 
\begin{lemma} \label{lem:sampling}
Let $A$ be a subset of a box $Q$ in $\mathbb{Z}^{d}$ with widths $w_{1},\dots,w_{d}$ and $0\in A$ and let ${\mathcal Z}_{A}$ be the corresponding zonotope.
Then, for any $z\in{\mathcal Z}_{A}$, there exists a subset sum $s(z)$ of $A$ such that  $|z_{i}-s(z)_{i}|\le \sqrt{d|A|}w_{i}$ for all $i\le d$. 
\end{lemma}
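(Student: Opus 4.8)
The plan is to prove Lemma~\ref{lem:sampling} by \emph{randomized rounding} of the coefficients in the zonotope representation of $z$, combined with a first‑moment argument that handles all $d$ coordinates at once. Since $z\in\mathcal Z_A$, by definition of the zonotope we may write $z=\sum_{a\in A}t_a\,a$ with each $t_a\in[0,1]$; the goal is then to replace the $t_a$ by values $\epsilon_a\in\{0,1\}$, which produces the subset sum $s(z)=\sum_{a\in A}\epsilon_a a$, without changing any coordinate by more than $\sqrt{d|A|}\,w_i$.

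First I would record the elementary observation that $|a_i|\le w_i$ for every $a\in A$ and every $i\le d$: as $A$ lies in a box $Q$ of widths $w_1,\dots,w_d$ and $0\in A$, the $i$‑th coordinate of every point of $Q$ lies in an interval of length $w_i$ that contains $0$, so its absolute value is at most $w_i$. Next, let $(\epsilon_a)_{a\in A}$ be independent Bernoulli variables with $\pr(\epsilon_a=1)=t_a$ and set $X_i=z_i-s(z)_i=\sum_{a\in A}(t_a-\epsilon_a)a_i$. Then $\E X_i=0$, and by independence $\E[X_i^2]=\sum_{a\in A}t_a(1-t_a)a_i^2\le\sum_{a\in A}a_i^2\le|A|\,w_i^2$, using the observation above in the last step. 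Summing the normalized second moments gives $\E\big[\sum_{i=1}^d X_i^2/w_i^2\big]\le d|A|$, so some realization of the $\epsilon_a$'s satisfies $\sum_{i=1}^d X_i^2/w_i^2\le d|A|$; for that realization $X_i^2\le d|A|\,w_i^2$ for \emph{every} $i$ simultaneously, i.e. $|z_i-s(z)_i|\le\sqrt{d|A|}\,w_i$ for all $i$. Fixing this realization yields the required subset sum $s(z)$.

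There is no serious obstacle here, but the point to be careful about is that one cannot afford a per‑coordinate tail bound together with a union bound over the $d$ coordinates (Chebyshev applied coordinatewise would force an extra factor growing with $d$). Instead one controls the single scalar $\sum_{i=1}^d X_i^2/w_i^2$ in expectation and applies the probabilistic method to it; the bound $|a_i|\le w_i$, which genuinely uses $0\in A$, is exactly what makes the per‑coordinate variance comparable to $|A|w_i^2$ and hence the whole sum comparable to $d|A|$.
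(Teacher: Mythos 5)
Your proof is correct: the representation $z=\sum_{a\in A}t_a a$ with $t_a\in[0,1]$, the bound $|a_i|\le w_i$ (which indeed uses $0\in A\subseteq Q$), and the first-moment argument applied to the single scalar $\sum_{i=1}^d X_i^2/w_i^2$ yield a realization of the $\epsilon_a$'s that is simultaneously good in every coordinate, and that realization is a genuine subset sum. Note that the paper does not prove this lemma but imports it from \cite{conlon2023homogeneous}; your randomized-rounding argument is essentially the standard proof of that statement, with the factor $\sqrt{d}$ arising exactly as you describe from summing the normalized variances over the $d$ coordinates rather than union-bounding coordinatewise.
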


We will apply Lemma \ref{lem:sampling} to the sets $A_1-a$ and $a-A_2$. 
Note that $\mu |A|c_x \in [0,1]$ for all $x$.  
\begin{lemma}\label{lem:zonotope}
    Let $c\in (0,1)$. Let $\xi = c'\mu \gamma$ for a suitable constant $c'$ depending on $c$ and $d$. Let $\overline{A}_1 \subset A_1$ be a set of size at least $c|A_1|$ and let $\overline{z}_1 = (\mu |A|/2) \sum_{x \in \overline{A}_1} c_x(x-a)$.
    Then we have $\overline{z}_1 + \xi |A|B \subseteq \mathcal{Z}_{A_1-a}$. Similarly, for $\overline{z}_2 = (\mu |A|/2) \sum_{x \in \overline{A}_2} c_x(a-x)$ we have $\overline{z}_2 + \xi |A|B \subseteq \mathcal{Z}_{a-A_2}$.
\end{lemma}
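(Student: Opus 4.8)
The plan is to place $\overline{z}_1$ deep inside the zonotope $\mathcal{Z}_{A_1-a}$ and then grow a box of the required size around it, working entirely with support functions.

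\emph{Reduction via support functions.} Put $\lambda_x:=(\mu|A|/2)c_x$; the observation $\mu|A|c_x\in[0,1]$ gives $\lambda_x\in[0,1/2]$, so $\overline{z}_1=\sum_{x\in\overline{A}_1}\lambda_x(x-a)$ is a point of $\mathcal{Z}_{\overline{A}_1-a}\subseteq\mathcal{Z}_{A_1-a}$. Since $B$ is a box and $\mathcal{Z}_{A_1-a}$ is compact and convex, the inclusion $\overline{z}_1+\xi|A|B\subseteq\mathcal{Z}_{A_1-a}$ is equivalent to $\langle\overline{z}_1,u\rangle+\xi|A|\,h_B(u)\le h_{\mathcal{Z}_{A_1-a}}(u)$ for every $u\in\R^d$, where $h$ denotes the support function. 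We have $h_{\mathcal{Z}_{A_1-a}}(u)=\sum_{x\in A_1}(\langle x-a,u\rangle)_+$, and, since $0\le\lambda_x\le 1/2$ and $\overline{A}_1\subseteq A_1$,
\[
\langle\overline{z}_1,u\rangle\le\sum_{x\in\overline{A}_1}\lambda_x(\langle x-a,u\rangle)_+\le\tfrac12\sum_{x\in A_1}(\langle x-a,u\rangle)_+ .
\]
Applying this to both $u$ and $-u$, the inclusion follows once we establish, for all $u\neq0$,
\begin{equation}\label{eq:star-plan}
\min\Bigl(\ \sum_{x\in A_1}(\langle x-a,u\rangle)_+\ ,\ \sum_{x\in A_1}(\langle x-a,u\rangle)_-\ \Bigr)\ \ge\ 2\xi|A|\,h_B(u);
\end{equation}
the statement for $a-A_2$ and $\overline{z}_2$ is identical, with $A_1,\overline{A}_1,x-a$ replaced by $A_2,\overline{A}_2,a-x$.

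\emph{Proving \eqref{eq:star-plan}.} Fix $u$ and set $H=\{y:\langle y-a,u\rangle=0\}$. Because $a$ is a deep point, each of the two closed halfspaces bounded by $H$ contains more than $\mu|A_0|$ points of $A_0$; since $A_1$ and $A_2$ are interleaved in the ordering by $c_x$ (and we only need \eqref{eq:star-plan} up to constant factors), a positive fraction of these points lies in $A_1$, so each side of $H$ contains $\gtrsim\mu|A|$ points of $A_1$. If more than $\delta|A|$ of them satisfied $|\langle x-a,u\rangle|\le\gamma'\,h_B(u)$ for a sufficiently small $\gamma'=\gamma'(d)>0$, then, fixing one such point $x_0\in A_1$, the set $A_1-x_0$ — of size $\ge|A|/4\ge\delta|A|$ — would contain $\ge\delta|A|$ points of the slab $\{y:|\langle y,u\rangle|\le 2\gamma'\,h_B(u)\}$; since $A_1-x_0$ also lies in a translate of $B$, this slab intersected with that translate is contained, after passing to a lattice basis adapted to $B$ via the discrete John's theorem (Lemma \ref{lem:john}), in an axis-aligned box of volume $\ll_d\gamma|B|$, so by Lemma \ref{lem:decrease-dim} we would get $|P(A_1-x_0)|\ll_d\gamma|B|$, contradicting the bound $|P(A_1-x_0)|\ge\gamma|B|$ supplied by $(\delta,\gamma)$-irreducibility. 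Hence $\gtrsim\mu|A|$ of the points $x\in A_1$ on the positive side of $H$ have $\langle x-a,u\rangle\gtrsim\gamma\,h_B(u)$, which gives $\sum_{x\in A_1}(\langle x-a,u\rangle)_+\gtrsim\mu\gamma|A|\,h_B(u)$, and symmetrically for $(\cdot)_-$. Choosing $c'$ in $\xi=c'\mu\gamma$ small enough in terms of $c$ and $d$ then yields \eqref{eq:star-plan}.

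\emph{Main obstacle.} Everything apart from \eqref{eq:star-plan} is bookkeeping — the bound $\lambda_x\le1/2$ and the translation of the inclusion into support functions. The real content of \eqref{eq:star-plan} is to convert ``$a$ is deep'' together with ``$P(A_1-a)$ has volume $\ge\gamma|B|$'' (irreducibility) into a lower bound for the support function of $\mathcal{Z}_{A_1-a}$ that holds \emph{uniformly in the direction $u$}. Two points require care: first, ensuring that a constant fraction of the many points of $A_0$ on each side of $H$ actually lie in $A_1$ (respectively $A_2$), which uses the $c_x$-interleaving of $A_1$ and $A_2$ and the freedom to lose constants; and second, reducing concentration of $A_1-x_0$ near an arbitrary hyperplane to concentration inside a thin \emph{axis-aligned} box, so that the largeness of $P(A_1-x_0)$ forced by $(\delta,\gamma)$-irreducibility leads to a contradiction via Lemma \ref{lem:decrease-dim} — this is exactly the role of the discrete John's theorem in the argument.
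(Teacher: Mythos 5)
Your support-function reduction is genuinely different from the paper's argument, but it is lossy in a way that creates a gap, and the step you use to fill it is incorrect.

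\emph{The reduction loses the negative term.} Writing $\langle\overline{z}_1,u\rangle=\sum_{x\in\overline A_1}\lambda_x(\langle x-a,u\rangle)_+-\sum_{x\in\overline A_1}\lambda_x(\langle x-a,u\rangle)_-$, the exact sufficient condition for the inclusion in direction $u$ is
\[
\xi|A|h_B(u)\ \le\ \sum_{x\in A_1}(\langle x-a,u\rangle)_+-\sum_{x\in\overline A_1}\lambda_x(\langle x-a,u\rangle)_++\sum_{x\in\overline A_1}\lambda_x(\langle x-a,u\rangle)_-\ \ge\ \sum_{x\in\overline A_1}\Delta_x|\langle x-a,u\rangle|,
\]
where $\Delta_x=\min(\lambda_x,1-\lambda_x)$. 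In other words, the natural quantity to lower bound is $\sum_x\Delta_x|\langle x-a,u\rangle|$, which is the support function of the \emph{centered} zonotope $\overline{\mathcal Z}_{\overline A_1-a}=\sum_x[-\Delta_x,\Delta_x](x-a)$ used in the paper. By discarding $+\sum_x\lambda_x(\langle x-a,u\rangle)_-$ you are left with your \eqref{eq:star-plan}, which demands that \emph{both} one-sided sums $\sum_{x\in A_1}(\langle x-a,u\rangle)_\pm$ be $\gtrsim\mu\gamma|A|h_B(u)$. This is a strictly stronger statement than the lemma requires: it would force $A_1$ to have substantial mass on both sides of every hyperplane through $a$, which is not guaranteed. (The centered zonotope is symmetric, so it sidesteps this; your uncentered sum is not.)

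\emph{The balance claim is unjustified.} To patch this you assert that ``since $A_1$ and $A_2$ are interleaved in the ordering by $c_x$, a positive fraction of [the $\ge\mu|A_0|$ points on each side of $H$] lies in $A_1$.'' This does not follow: the interleaving is with respect to the coefficient ordering $c_x$, which is a priori unrelated to the spatial ordering by $\langle x-a,u\rangle$. Nothing prevents, for some direction $u$, the overwhelming majority of the $\ge\mu|A_0|$ points on the positive side from landing in $A_2$. The deep-point property of $a$ controls $A_0$, not its alternating partition into $A_1$ and $A_2$. Your contradiction argument via Lemma~\ref{lem:john}, Lemma~\ref{lem:decrease-dim} and $(\delta,\gamma)$-irreducibility is essentially the same as the paper's and is fine modulo choosing the slab width $\gamma'$ as a small multiple of $\gamma$ (not a constant $\gamma'(d)$). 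The correct route is the one in the paper: keep the negative contribution, pass to $\sum_x\Delta_x|\langle x-a,u\rangle|$, and use the fact that $\sum_x c_x$ is close to $1$ together with the irreducibility to show that most of the $c_x$-mass in $\overline A_1$ lies outside a thin slab, so the absolute-value weighted sum is large. No spatial-balance claim about $A_1$ versus $A_2$ is needed.
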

\begin{proof}
Let $\Delta_x = \min(\mu |A| c_x/2,1-\mu |A| c_x/2) = \mu |A| c_x/2$ (noting that $\mu |A|c_x \le 1$ for all $x \in A_0$), and define the centered zonotope $\overline{\mathcal{Z}}_{\overline{A}_1-a}$ as the Minkowski sum of the segments $[-\Delta_x,\Delta_x] (x-a)$ with $x\in \overline{A}_1$. We then have that 
\[
\overline{z}_1 + \overline{\mathcal{Z}}_{\overline{A}_1-a} \subseteq \mathcal{Z}_{\overline{A}_1-a} \subseteq \mathcal{Z}_{A_1-a}.
\] 

We show that $\overline{\mathcal{Z}}_{\overline{A}_1-a}$ contains $\xi|A|B$ for an appropriate $\xi>0$. Assume for the sake of contradiction that there is a point $x \in \xi|A|B \setminus \overline{\mathcal{Z}}_{\overline{A}_1-a}$. As $\overline{\mathcal{Z}}_{\overline{A}_1-a}$ is convex, there is $u\in \mathbb{R}^d$ and $z \in \R$ such that $u\cdot x > z$ whereas $u\cdot y < z$ for all $y\in \overline{\mathcal{Z}}_{\overline{A}_1-a}$. Since $0\in \overline{\mathcal{Z}}_{\overline{A}_1-a}$, we must have that $z\ge 0$. Furthermore, we obtain $\sum_{y\in \overline{A}_1-a} |u\cdot y| \Delta_y < z$. 

Pick $\delta' \le c_d\gamma$ for a suitable constant $c_d$ depending on $d$. Consider $H = \{y: |u\cdot y| < \delta' z/(\xi|A|)\}$. We claim that $|(\overline{A}_1-a)\cap H| \le 2\delta |\overline{A}_1|$. Indeed, assume that $|(\overline{A}_1-a)\cap H| \ge 2\delta |\overline{A}_1|$. Consider $H\cap \mathbb{Z}^d$. By Lemma \ref{lem:john}, there is a basis $\mathbf{v}$ and a tuple of integers $\mathbf{N}$ such that $P(\mathbf{v},\mathbf{N}) \supseteq H\cap B \supseteq P(\mathbf{v},c_d\mathbf{N})$. In particular, we have $|P(\mathbf{v},\mathbf{N})| \ll_d |H\cap B|$. Thus, we have a subset of $A_0$ of size at least $\delta |A|$ for which the $d$-bounding box has size at most $C_d |H\cap B|$. As $A$ is $(\delta,\gamma)$-irreducible, this implies that $|H\cap B| \ge C_{d}^{-1}\gamma |B|$.
However, we have $|H\cap B| \ll_d \delta' \cdot |B|$: to see this, note that the existence of the point $x\in \xi|A|B$ with $u\cdot x > z$ implies that $\max_i |u_i| w_i(B) > z/(\xi|A|d)$, so in each line in the $i$-th direction only $2d\delta'w_i(B)$ points may lie in $H$. As such, $|H\cap B| \le 2d\delta'|B|$. However, this is a contradiction for $\delta'$ chosen so that $2d\delta' < C_d^{-1}\gamma$. 

As such, the sum of the coefficients $c_x$ over $x\in H\cap (\overline{A}_1-a)$ is at most $|(\overline{A}_1-a)\cap H|\mu^{-1}/|A| \le 2\delta \mu^{-1} < 1/2$, for $\delta < \mu/4$. Thus, 
\[
    \sum_{x\in (\overline{A}_1-a)\setminus H} \Delta_x = (\mu |A|/2) \sum_{x\in (\overline{A}_1-a)\setminus H} c_x \ge \mu |A|/4.
\]
Hence, recalling that $|u\cdot y|\ge \delta' z/(\xi |A|)$ for $y\notin H$, 
\[
    \sum_{y\in \overline{A}_1-a} |u\cdot y| \Delta_y \ge \sum_{y\in (\overline{A}_1-a)\setminus H} \frac{\delta' z}{\xi |A|} \Delta_y\ge \frac{\delta' z}{\xi |A|} \cdot \frac{\mu |A|}{4} > z,
\] 
provided that we have chosen $\xi < \delta'\mu/4$, a contradiction. We conclude that $\xi |A| B \subset \overline{\mathcal Z}_{\overline{A}_1-a}$, which implies that 
\[
\overline{z}_1 + \xi|A|B \subset \overline{z}_1 + \overline{\mathcal Z}_{\overline{A}_1-a} \subset \mathcal Z_{A_1-a}.
\] 
An analogous argument shows that $\overline{z}_2 + \xi |A|B \subset \mathcal Z_{a-A_2}$.
\end{proof}

We now complete the proof of Theorem \ref{thm:int-sums}. 
\begin{proof}[Proof of Theorem \ref{thm:int-sums}]
    Recall the definition of $A_1, A_2 \subseteq A_0 = \phi_P(A)$ in (\ref{eq:defA12}), and $P_1 = P(A_1 - a)$, $P_2 = P(a-A_2)$. By definition, $A_1-a$ has a subset $\hat A_1$ of size at least $|A_1| - c^{-1}s(|A_1|)\log |A_1|$ contained in $P_1$ such that for $A_1'\subset \hat A_1$ of size at most $s_1 = s(A_1)$ we have that $\Sigma(A_1')$ contains $q_1 + cs_1P_1$, where $q_1 \in s_1P_1$. Here $c$ is a constant which can depend on $\ell$ and $\beta$. Let $\bar A_1 = \hat A_1\setminus A_1'$. We can similarly define sets $\bar A_2$, $A_2'$ and $s_2$.

    We will show that 
    \begin{equation}\label{eq:intersect-phi}
        (\Sigma(A_1-a) \cap \Sigma(a-A_2))\setminus \{0\}\ne \emptyset.
    \end{equation}
    
    By Lemma \ref{lem:sampling} for any $z\in \mathcal{Z}_{\bar A_1}$ there exists $\tilde z \in \Sigma(\bar A_1)$ and $r \in \sqrt{d |A_1|} P_1$ such that $z = \tilde z + r$. Note that $s_1 \gg \sqrt{|A_1|} $. Then it follows that for any $z\in \mathcal{Z}_{\bar A_1} \cap \langle P_1 \rangle$ and $t \in q_1 + (cs_1/2)P_1$ the sum $y = z+t \in  \langle P_1\rangle$ can be written as $y = z+t = \tilde z  + (r +t) \in  \Sigma(\bar A_1) + \Sigma(A'_1)$.
    Thus,
    \begin{equation}\label{eq:inclusion}
        \langle P_1 \rangle \cap (\mathcal{Z}_{\bar A_1} +q_1 + (cs_1/2) P_1) \subset \Sigma(\bar A_1) + \Sigma(A_1') \subset \Sigma(A_1-a),
    \end{equation}
    and similarly, we have 
    \[
    \langle P_2 \rangle \cap (\mathcal{Z}_{\bar A_2} + q_2 + (cs_2/2) P_2) \subset \Sigma(a- A_2).
    \]

    Now denote $z_1 = \mu |A| \sum_{x \in A_1} c_x (x-a)$ and $z_2 = \mu |A| \sum_{x \in A_2} c_x (a-x)$. Note that, in fact, we have $z_1 = z_2$ since by the choice of coefficients $c_x$ we have $\sum_{x\in A} c_x (x-a) = 0$. We can write $z_1 = \bar z_1 + z_1'$ and $z_2 = \bar z_2 + z_2'$, where $\bar z_1 = \mu |A| \sum_{x \in \bar A_1} c_x (x-a)$ and $\bar z_2 = \mu |A| \sum_{x \in \bar A_2} c_x (a-x)$. 
    By Lemma \ref{lem:zonotope}, we have
    \[
    \conv (\bar z_1 + \xi |A| B) \subset \mathcal Z_{A_1-a} ~\text{ and }~ \conv (\bar z_2 + \xi |A| B) \subset \mathcal Z_{a-A_2}.
    \]
    (Recall that $B$ is an intersection of a convex set with the integer grid.) 
    We thus have that $\Sigma(A_1-a)$ contains 
    \[
        \langle P_1\rangle \cap (q_1 + \mathrm{conv}(\bar z_1 + \xi |A|B) + (cs_1/2)P_1).
    \]
    Similarly, $\Sigma(a-A_2)$ contains 
    \[
        \langle P_2\rangle \cap (q_2 + \mathrm{conv}(\bar z_2 + \xi |A|B) + (cs_2/2)P_2).
    \]
    Let $R_1 = A_1'\cup ((A_1-a)\setminus \hat A_1)$ and $R_2 = A_2'\cup ((a-A_2)\setminus \hat A_2)$ and note that $|R_i| \le O(s(A_i)\log |A_i|)$ since $|\hat{A}_i| \ge |A_i| - c^{-1}s(A_i)\log |A_i|$ and $|A_i'| \le s(A_i)$. Thus we have that $\bar z_1 - z_1 = \sum_{x\in A_1 \setminus a+\overline{A}_1} \mu |A|c_x (x-a)\in \mathrm{conv}((Cs(A)\log |A|)B)$ and similarly $\bar z_2-z_2 \in \mathrm{conv}((Cs(A)\log |A|)B)$. Also recall that $z_1=z_2$.
    Furthermore note that $q_1\in s_1P_1$ and hence $\langle P_1\rangle \cap (q_1 + \mathrm{conv}(\bar z_1 + \xi |A|B) + (cs_1/2)P_1)$ contains $\langle P_1\rangle \cap (z_1 + \xi' |A| B)$ for an appropriate $\xi' = \xi/2$. Indeed, $z_1 - \overline{z}_1 \in \mathrm{conv}((Cs(A)\log |A|)B)$ and $-q_1 \in s_1 P_1$ so \[z_1 \xi'|A|B \subseteq q_1 + \overline{z}_1 + \mathrm{conv}((\xi'|A|+Cs(A)\log |A|)B) + s_1P_1 \subseteq q_1+\overline{z}_1+\mathrm{conv}(\xi|A|B).\] Here by the definition of $s(A)$ and $\xi = c'\mu \gamma$, under the assumptions on $\mu$ and $\gamma$, we have that $s(A)\log |A| = |A|/\log |A| < \xi |A|/2C$. Hence, it suffices to show that 
    \[
        (\langle P_1\rangle \cap (z_1 + \xi' |A| B)) \cap (\langle P_2\rangle \cap (z_2 + \xi' |A| B)) \ne \{z_1\}.
    \]
    This must hold since $\langle P_1\rangle \cap \langle P_2\rangle$ is a lattice of bounded covolume in $\langle B\rangle$ by Lemma \ref{lem:gap-grid}. 

    This establishes (\ref{eq:intersect-phi}). In particular, there exists nonempty $\tilde{A}_i' \subseteq A_i \subseteq \mathbb{Z}^d$ such that 
    \[
        \sum_{x\in \tilde{A}_1'} (x-a) = \sum_{x\in \tilde{A}_2'} (a-x).
    \]
    Under the map $\phi_P^{-1}$, we then have, for $\tilde{A}_1 = \phi_P^{-1}(\tilde{A}_1')$ and $\tilde{A}_2 = \phi_P^{-1}(\tilde{A}_2')$ and $\tilde{a} = \phi_P^{-1}(a)\in A$, that
    \[
        \sum_{x\in \tilde{A}_1'} (x-\tilde{a}) = \sum_{x\in \tilde{A}_2'} (\tilde{a}-x),
    \]
    yielding the desired conclusion (\ref{eq:thmintsums}). 
\end{proof}

\section{Non-averaging sets}\label{section:non-averaging}
Recall that that $A\subset \mathbb{Z}^d$ is \emph{non-averaging} if there is no element $a$ in $A$ which can be written as an average of a subset $A'$ not containing $a$. In this section, we will give the proof of Theorem \ref{thm:main}, and the special case Theorem \ref{thm:main-0}. For convenience of the reader, we recall the statement of Theorem \ref{thm:main}. 

\begin{theorem*}%
    Let $A \subseteq \mathbb{Z}^d$ be a non-averaging subset of a box $B$. Then $|A| \le |B|^{\alpha_d+o(1)}$ where $\alpha_d = (d-1)/(d+1)$ for $d\ge 2$ and $\alpha_1 = 1/4$. 
\end{theorem*}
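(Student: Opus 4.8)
\textit{The plan} is a density-increment iteration: starting from a non-averaging $A\subseteq B\subseteq\mathbb Z^d$ with $N=|B|$ large, assume toward a contradiction that $\log|A|\ge(\alpha_d+c)\log N$ for some fixed $c>0$, and repeatedly replace the configuration by a denser non-averaging one inside a smaller box. Fix auxiliary parameters $\epsilon,\varepsilon,\mu,\delta,\gamma$, chosen as slowly-varying functions of $N$ subject to $\gamma\le\delta^{K(\epsilon)}$, $\delta\le\mu^{C}$, $\gamma\ge(\log|A|)^{-1/C'}$, $\mu<\delta_0$ and $\epsilon<1/3$, where $K(\epsilon)$ comes from Lemma \ref{lem:reduce} and $C,C',\delta_0$ from Theorem \ref{thm:int-sums} and Lemma \ref{lem:convex1} (these may be taken to depend only on $\beta=4$ and on a bound $D$ on the dimension; both stay in force throughout). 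One macro step sends a non-averaging $A_i\subseteq B_i\subseteq\mathbb Z^{d_i}$ to a non-averaging $A_{i+1}\subseteq B_{i+1}\subseteq\mathbb Z^{d_{i+1}}$ as follows. First apply Lemma \ref{lem:reduce} to $A_i$ to obtain a non-averaging $(\delta,\gamma)$-irreducible set $\tilde A_i$ of subset-sum dimension $d_{i+1}:=\tilde d_i$, with GAP $\tilde P_i=P(\tilde A_i)$, with $|\tilde A_i|>|A_i|^{1-\epsilon}$ and with the quantitative bound on $|\tilde P_i|$ stated there (splitting into the cases $\tilde d_i>d_i$, $\tilde d_i=d_i$, $\tilde d_i<d_i$). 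Since $\tilde A_i$ is non-averaging and $(\delta,\gamma)$-irreducible, the contrapositive of Theorem \ref{thm:int-sums} shows $A_i':=\phi_{\tilde P_i}(\hat{\tilde A}_i)$ lies in $\mu$-convex position inside the box $B_i':=\phi_{\tilde P_i}(\tilde P_i)\subseteq\mathbb Z^{d_{i+1}}$, and $A_i'$ is again non-averaging. Finally apply Lemma \ref{lem:convex1} to $A_i'\subseteq B_i'$ to get $\eta_i\in[\mu,\mu^{\tau}]$ and a convex $\Omega'\subseteq B_i'$ with $\mathrm{Vol}(\Omega')\le\eta_i\mathrm{Vol}(B_i')$ and $|A_i'\cap\Omega'|\ge\eta_i^{\alpha_{d_{i+1}}+\varepsilon}|A_i'|$ (for $d_{i+1}=1$ the lemma gives the even stronger $\eta_i^{\varepsilon}$, so this form is valid in all dimensions); cover $\Omega'$ by a box $B_{i+1}$ of comparable volume and set $A_{i+1}:=A_i'\cap\Omega'$. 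Every step preserves being non-averaging, keeps $d_i$ in a bounded window $\{1,\dots,D\}$ (the $\tilde d=O(1)$ clause of Lemma \ref{lem:reduce}), and keeps $\log|B_i|/\log|A_i|<4$, because the potential estimate below keeps $\log|A_i|>\alpha_{d_i}\log|B_i|\ge\tfrac14\log|B_i|$.

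Next I would track the potential $\mathcal E_i:=\log|A_i|-\alpha_{d_i}\log|B_i|$ and show it decreases by at most $\varepsilon|\log\eta_i|+O(1)$ when $d_{i+1}=d_i$, and does not decrease at all when the dimension changes. In the equal-dimension case, writing $|\tilde A_i|=\rho_i|A_i|$ (so $|\log\rho_i|\le\epsilon\log|A_i|$), the two contributions to $\mathcal E_{i+1}-\mathcal E_i$ coming from this shrinkage are $-|\log\rho_i|$ (from $\log|A_{i+1}|$) and $+\alpha_{d_i}K|\log\rho_i|$ (from $-\alpha_{d_i}\log|B_{i+1}|$, via $|\tilde P_i|\ll\rho_i^{K}|B_i|$), which combine to $(\alpha_{d_i}K-1)|\log\rho_i|\ge0$ since $K$ is large and $\alpha_{d_i}>0$; this positivity is exactly why the convention $\alpha_1=1/4$ rather than the naive $0$ is forced, so that the reduction loss is absorbed. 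In a dimension-dropping step, $\alpha_{d_{i+1}}<\alpha_{d_i}$ together with the modest shrinkage $|\tilde P_i|\ll|B_i|$ push $\mathcal E$ up. In a dimension-raising step, the large shrinkage $|\tilde P_i|\ll|A_i|^{-(1-\epsilon)(d_{i+1}-d_i)}|B_i|$ enters $-\alpha_{d_{i+1}}\log|B_{i+1}|$ favourably, and with $\alpha_{d_{i+1}}=(d_{i+1}-1)/(d_{i+1}+1)$ a direct computation shows $\mathcal E$ does not decrease provided $\Psi_i:=\log|A_i|/\log|B_i|\ge\frac{\alpha_{d_{i+1}}-\alpha_{d_i}}{\alpha_{d_{i+1}}(d_{i+1}-d_i)}$; one checks elementarily that this right-hand side never exceeds $\alpha_{d_i}$, with equality exactly for $(d_i,d_{i+1})\in\{(1,2),(1,3),(2,3)\}$, which pins down the tight $2$- and $3$-dimensional constructions. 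Since $\mathcal E_i>0$ forces $\Psi_i>\alpha_{d_i}$, the threshold is always met.

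It remains to assemble the contradiction. Choosing the parameters so that the accumulated loss is $o(\log N)$ — possible because $|\log\eta_i|\to\infty$ (from $\mu\to0$) makes the number of steps $T=O(\log N/|\log\mu|)$, so the $O(1)$ per-step terms sum to $o(\log N)$, while $\varepsilon\to0$ controls $\varepsilon\sum_i|\log\eta_i|\le\varepsilon(\log N+O(T))=o(\log N)$, and the $|A_i|^{1-\epsilon}$ losses cost nothing by the cancellation above — the potential satisfies $\mathcal E_i\ge\mathcal E_0-o(\log N)\ge(c/2)\log N$ for every $i$ at which a step can be carried out. But the iteration terminates only once $|A_i|$ falls below the size threshold required by Lemmas \ref{lem:reduce}, \ref{lem:convex1} and Theorem \ref{thm:int-sums}, a threshold that stays below $N^{c/2}$; at the terminal index, $\mathcal E_i\le\log|A_i|<(c/2)\log N$, contradicting the previous line. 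Hence $\log|A|\le(\alpha_d+o(1))\log N$, which is Theorem \ref{thm:main}; Theorem \ref{thm:main-0} is the case $d=1$.

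\textit{The hard part} is the simultaneous calibration of $\epsilon,\varepsilon,\mu,\delta,\gamma$ as functions of $N$: they must satisfy every hypothesis of Lemmas \ref{lem:reduce}, \ref{lem:convex1} and Theorem \ref{thm:int-sums} along the entire run (which couples $\mu$ to $\gamma$, $\delta$ and $K(\epsilon)$, forces $|\log\mu|\to\infty$ but only moderately, roughly $|\log\mu|=o(\log\log N)$, and requires $\varepsilon\to0$ slowly enough that $\delta_0(\varepsilon)$ and $\tau(\varepsilon)$ do not decay too fast), while keeping the terminal size threshold $N^{o(1)}$. A secondary point is that each macro step passes through an identification map $\phi_{\tilde P_i}$ and an intersection with the convex set $\Omega'$, so one must check that being non-averaging, containment in an axis-aligned box, the subset-sum dimension bookkeeping, and the correspondence (from the proof of Lemma \ref{lem:convex1}) between the boundary points used to build $\Omega'$ and genuinely many points of $A_i$ are all preserved; these are routine but must be spelled out. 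Finally, since the bound $\log|B_i|/\log|A_i|<4$ — hence the uniformity of all the constants — is itself a consequence of $\mathcal E_i>0$, the potential estimate and the parameter bounds must be run as a single induction on $i$ rather than sequentially.
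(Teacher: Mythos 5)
Your proposal follows the same overall architecture as the paper's proof: an iteration that alternates between (i) Lemma \ref{lem:reduce} to pass to a $(\delta,\gamma)$-irreducible set, (ii) Theorem \ref{thm:int-sums} to conclude $\mu$-convex position from non-averaging-ness, and (iii) Lemma \ref{lem:convex1} to execute the density increment in the equal-dimension case, with a bounded number of dimension-changing steps controlled by a numerical observation about the $\alpha_d$'s. The iteration terminates because the ambient box keeps shrinking while the set stays large, and the contradiction follows.

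The one genuine difference is bookkeeping. The paper tracks the \emph{exponent excess} $\zeta_j$ with $|A_j|\ge|B_j|^{\alpha_{d_j}+\zeta_j}$ and, via Observation \ref{obs:numerics}, shows $\zeta_j$ strictly increases, bounding the number of dimension-changing steps by $O_{\zeta,d}(1)$ and thereby isolating Case~3 as the dominant mode; the final contradiction comes from $|A_j|\ge|A|^{1-c}$ colliding with a shrinking box. You instead track the \emph{logarithmic excess} $\mathcal E_i=\log|A_i|-\alpha_{d_i}\log|B_i|=\zeta_i\log|B_i|$ and aim for $\mathcal E_i$ to be (almost) monotone, which is a slightly stronger statement than $\zeta_i$ being monotone since $\log|B_i|$ decreases. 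Your threshold condition $\Psi_i\ge\frac{\alpha_{d_{i+1}}-\alpha_{d_i}}{\alpha_{d_{i+1}}(d_{i+1}-d_i)}$ for the dimension-raising step is, after some algebra, the same numerical fact as Observation \ref{obs:numerics}, and your check that the threshold equals $\alpha_{d_i}$ exactly for $(d_i,d_{i+1})\in\{(1,2),(1,3),(2,3)\}$ nicely explains why $\alpha_1$ must be $1/4$ and which constructions are tight — the paper makes the same remark less explicitly after Observation \ref{obs:numerics}. Both frameworks cash out to the same contradiction.

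Two places where your sketch glosses over details the paper spends effort on. First, after Lemma \ref{lem:convex1} you say ``cover $\Omega'$ by a box $B_{i+1}$ of comparable volume''; but $\Omega'$ is a skewed convex body, and one must pass through the discrete John lemma (Lemma \ref{lem:john}) and then through the identification map of the resulting GAP to actually land back in an axis-aligned box in $\mathbb Z^{d}$ while preserving non-averaging-ness — the paper does this explicitly. Second, your final-stage calibration couples $\varepsilon\to 0$ (in the increment lemma) with $\delta_0(\varepsilon)$, $\tau(\varepsilon)$, $K(\epsilon)$ and the slow decay of $\mu,\delta,\gamma$ in $N$; you flag this correctly as ``the hard part,'' and it is precisely what the paper resolves by fixing $\zeta,\epsilon,K,D$ as constants (so that $\varepsilon$ need not tend to $0$ in $N$, only be small in $\zeta$) and letting only $\delta=(\log\log|A|)^{-\kappa}$, $\gamma=\delta^K$, $\mu=\delta^\kappa$ vary. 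That choice is cleaner than a genuine $\varepsilon\to 0$ schedule and avoids the circularity you worry about; it would be worth replacing your phrasing with this fixed-constant convention when writing up.
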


We will need the following numerical fact:

\begin{obs}\label{obs:numerics}
    For any $\zeta \in (0, 1)$, there exists $c(\zeta) >0$ depending continuously on $\zeta$ such that: 
    \[
    (\alpha_{\hat d}+\zeta) \left(\frac{1}{\alpha_d+\zeta} - (1-\epsilon)(\hat d-d)\right) \le 1-c(\zeta),
    \]
    for any $\hat d > d \ge 1$ and $\epsilon \le 0.01 \zeta$.
\end{obs}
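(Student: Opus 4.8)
I would prove this by reducing to a finite check and then bounding the few ``tight'' configurations by hand. Write $k=\hat d-d\ge 1$, put $M=\alpha_{\hat d}+\zeta$ and $L=\frac{1}{\alpha_d+\zeta}-(1-\epsilon)k$, so the left-hand side is $ML$; since $\alpha_{\hat d}=\frac{\hat d-1}{\hat d+1}<1$ we have $0<M<1+\zeta\le 2$. If $L\le 0$ then $ML\le 0\le 1-c(\zeta)$ and there is nothing to prove, so assume $L>0$. Then $k<\frac{1}{(1-\epsilon)(\alpha_d+\zeta)}\le\frac{1}{(1-\epsilon)\alpha_d}$, and since $\alpha_d\ge\alpha_1=\frac14$ and $\epsilon\le 0.01$, this forces $k\le 4$. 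For each fixed $k\ge 2$ the condition $L>0$ moreover forces $\alpha_d<\frac{1}{k(1-\epsilon)}$, which, with the explicit values of the $\alpha_d$ and the hypothesis $\epsilon\le 0.01\zeta$, leaves only finitely many pairs $(d,k)$ with $k\ge 2$ (namely $d\le 3$ when $k=2$, $d\le 2$ when $k=3$, and $d=1$ when $k=4$).

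Next I would dispose of all pairs with $k\ge 2$ except $(d,k)=(1,2)$. For each such pair, at $\zeta=\epsilon=0$ the quantity equals $\alpha_{d+k}\bigl(\tfrac{1}{\alpha_d}-k\bigr)\le\tfrac35<1$; since $ML$ is continuous on the compact parameter set $\{0\le\zeta\le 1,\ 0\le\epsilon\le 0.01\zeta\}$ and one checks $ML<1$ throughout it (using that $L$ is decreasing in $\zeta$ while $M\le 2$; in fact only $(2,2)$ and $(1,3)$ can even have $L>0$ here), each of these finitely many cases gives $ML\le 1-\kappa_0$ for an absolute constant $\kappa_0>0$. This contributes the flat part of the final constant.

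The remaining and main step treats $k=1$ (all $d\ge 1$) together with $(d,k)=(1,2)$, where $ML\to 1$ as $\zeta,\epsilon\to 0$, so the gap must be shown to be of order $\zeta$. Taking $\epsilon=0$ first, for $k=1$ and $d\ge 2$ a direct simplification gives
\[
1-(\alpha_{d+1}+\zeta)\Bigl(\tfrac{1}{\alpha_d+\zeta}-1\Bigr)=\frac{(d-2)(d+1)+2\zeta(d^2+d-1)+\zeta^2(d+1)(d+2)}{(d+2)\bigl((d-1)+\zeta(d+1)\bigr)},
\]
whose numerator is a sum of nonnegative terms for $d\ge 2$; bounding $(d-1)+\zeta(d+1)\le 2d$ via $\zeta\le 1$, the middle term alone yields the lower bound $\tfrac{2\zeta(d^2+d-1)}{2d(d+2)}\ge\tfrac58\zeta$, uniformly in $d\ge 2$. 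The analogous, simpler computations for $(d,k)=(1,1)$ and $(1,2)$, now with $\alpha_1=\tfrac14$, give the lower bounds $\tfrac{7}{15}\zeta$ and $\tfrac65\zeta$. Restoring $\epsilon$: the perturbation $ML-ML|_{\epsilon=0}=Mk\epsilon$ is at most $8\epsilon\le 0.08\zeta$, and $0.08<\tfrac{7}{15}$, so it is absorbed, giving $ML\le 1-0.3\zeta$ in every tight case. Setting $c(\zeta)=\min\{0.3\,\zeta,\ \kappa_0\}$, which is continuous and positive on $(0,1)$, completes the proof.

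I expect the third step to be the main obstacle: one has to pin down the exact algebraic form of $1-ML$ in the $k=1$ case so that its numerator is manifestly nonnegative, extract from it a lower bound of order $\zeta$ that is uniform in $d$ rather than only valid for each fixed $d$, and notice that it is precisely the quantitative hypothesis $\epsilon\le 0.01\zeta$ — not merely ``$\epsilon$ small'' — that makes the $\epsilon$-correction negligible against this $\zeta$-sized gap.
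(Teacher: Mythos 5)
Your proof is correct. Let me briefly compare it with the paper's argument, since you organize things somewhat differently even though the overall strategy (restrict to finitely many $(d,\hat d)$ pairs, then check each) is the same.

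The paper splits by $d=1$ versus $d>1$, then for $d>1$ by $d\ge 7$ (where a crude bound like $ML\le 2(\tfrac13+\epsilon)$ is uniform) versus $d<7$ (``a finite check''), and for $d=1$ simply asserts a finite check with critical cases $\hat d=2,3$. You instead organize by $k=\hat d-d$, which cleanly isolates all the tight configurations at once: the $L>0$ requirement gives $k\le 4$ and then (for $k\ge 2$) at most finitely many $d$, and the identity $ML|_{\zeta=\epsilon=0}=\alpha_{d+k}(\tfrac1{\alpha_d}-k)$ shows immediately which pairs are borderline. Your real value-added is the explicit algebraic formula for $1-ML|_{\epsilon=0}$ in the $k=1$ case,
\[
\frac{(d-2)(d+1)+2\zeta(d^2+d-1)+\zeta^2(d+1)(d+2)}{(d+2)\bigl((d-1)+\zeta(d+1)\bigr)},
\]
(which I verified), from which a $\zeta$-linear lower bound $\ge\tfrac58\zeta$ uniform in $d\ge 2$ drops out. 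The paper never writes this down, leaving the $\zeta$-dependence of $c(\zeta)$ implicit in the phrase ``it is a simple numerical check.'' Your handling of the $\epsilon$-perturbation via $|ML-ML|_{\epsilon=0}|=Mk\epsilon\le 8\epsilon\le 0.08\zeta$ is also more transparent than the paper's, and it correctly explains why the precise hypothesis $\epsilon\le 0.01\zeta$ (rather than ``$\epsilon$ small'') is what is needed. Two minor inaccuracies that do not affect the proof: you say $ML\to 1$ for all $k=1$, but only $(d,k)=(2,1)$ is actually tight as $\zeta\to 0$ (for $d\ge 3$ the numerator already has the positive constant term $(d-2)(d+1)$); and your parenthetical justification for why the non-tight $k\ge 2$ cases satisfy $ML<1$ (``$L$ decreasing, $M\le 2$'') does not by itself bound $ML$ by $1$, but the compactness/continuity argument you invoke is correct and the claim is easy to check directly.
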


\begin{proof} Indeed, for $\hat d > d > 1$, 
\begin{align*}
    (\alpha_{\hat d}+\zeta)\left(\frac{1}{\alpha_d+\zeta} - (1-\epsilon)(\hat d -d)\right) &= \left(\frac{\hat d -1}{\hat d + 1}+\zeta\right)\left(\frac{1}{\frac{d-1}{d+1}+\zeta} - (1-\epsilon)(\hat d -d)\right)\\
    &< 1-c.
\end{align*}
For $\epsilon \le \zeta < 0.01$ and for $c\le \zeta$, it is a simple numerical check to show that the above inequality holds for all $\hat d > d > 1$. Indeed, for the left hand side to be non-negative, as $d\ge 2$, we need $1\le \hat{d}-d \le 3$. For $d\ge 7$, we have 
\begin{align*}
    \left(\frac{\hat d -1}{\hat d + 1}+\zeta\right)\left(\frac{1}{\frac{d-1}{d+1}+\zeta} - (1-\epsilon)(\hat d -d)\right)& \le \left(\frac{\hat d -1}{\hat d + 1}+\zeta\right) \left(\frac{4}{3} - (1-\epsilon)\right) \\
    &\le 2 \left(\frac{1}{3}+\epsilon\right) < 1-c.
\end{align*}
A finite check over $d<7$ and $d<\hat{d}\le d+3$ establishes the desired claim.

If $d=1$, then similarly, for $\hat d > d = 1$, and $\epsilon$ sufficiently small in $\zeta$, 
\begin{align*}
    (\alpha_{\hat d}+\zeta)\left(\frac{1}{\alpha_d+\zeta} - (1-\epsilon)(\hat d -d)\right) &= \left(\frac{\hat d -1}{\hat d + 1}+\zeta\right)\left(\frac{1}{1/4+\zeta} - (1-\epsilon)(\hat d - 1)\right) \\
    &< 1-c,
\end{align*}
for $\epsilon \le c_1 c$ and $c\le c_2\zeta$ for appropriate absolute constants $c_1, c_2>0$. Here, we again note that the left hand side is only non-negative when $\hat{d} \le 5$ and a finite check establishes the claim. The critical case leading to the dependence between $\epsilon,\,c$ and $\zeta$ occurs when $\hat{d}=2$ and $\hat{d}=3$.   
\end{proof}

We remark that the critical case $d=1, \hat{d}=2$ also reflects the tightness of the non-averaging set problem witnessed by the construction in dimension $2$. Also, note that $\alpha_d\ge 1/4$ for all $d\ge 1$ and $\alpha_d$ is increasing in $d$. 

We will next complete the proof of Theorem \ref{thm:main}. In particular, we want to show that, for any $\zeta>0$, no non-averaging set $A\subseteq B\subseteq \mathbb{Z}^d$ of size $|A| > |B|^{\alpha_d+\zeta}$ exists for sufficiently large $|A|$. By applying Lemma \ref{lem:reduce}, we show that we replace $A$ by a non-averaging irreducible set $\tilde{A}$ which is a subset of a box $\tilde{P}$ where $|\tilde{A}| \ge |\tilde{P}|^{\alpha_{\tilde{d}} + \tilde{\zeta}}$ where $\tilde{\zeta}-\zeta \ge \theta(\zeta,d) > 0$ whenever $\dim(\tilde{A})\ne \dim(A)$ and otherwise $\tilde{\zeta}-\zeta \ge \theta'(\zeta,d,|A|)>0$. Such increment (in the $\tilde{\zeta}$ parameter) follows quickly when $\tilde{d} \ne d$. When $\tilde{d}$, the increment follows from the fact that non-averaging $\tilde{A}$ must be in $\mu$-convex position, together with the density increment Lemma \ref{lem:convex1}. As the increment depends on the current dimension parameter $d$, we need to ensure that along this iteration that the maximum dimension of the sets $\tilde{A}$ stay bounded by a suitably chosen parameter $D$, and the sets $\tilde{A}$ remain large (relative to the original set $A$). Given these conditions, we obtain a contradiction, as the parameter $\tilde{\zeta}$ is bounded above by $1$. 

\begin{proof}[Proof of Theorem \ref{thm:main}]
Consider $A\subseteq B \subseteq \mathbb{Z}^d$ with $|A| > |B|^{\alpha_d + \zeta}$ for some $\zeta >0$. Let $D=D(d, \zeta)$ be sufficiently large in $d$ and $\zeta$.   
Let $\epsilon \le 0.01 \zeta$ be small enough depending on $\zeta$, $D$ and $d$. %
Let $K=K(\epsilon)$ be large enough depending on $\epsilon$. We then take $\kappa$ to be sufficiently small depending on $\epsilon$ and $K^{-1}$, $\delta = (\log \log |A|)^{-\kappa}$, $\gamma = \delta^K$ and $\mu = \delta^{\kappa}$. We assume that $|A|$ is sufficiently large in the parameters $\epsilon$ and $d$. For simplicity of notation, throughout the proof we write $\ll$ for $\ll_d$. %

We now apply Lemma \ref{lem:reduce} to replace $A$ by a non-averaging set $\tilde{A}$ which is $(\delta,\gamma)$-irreducible. We consider three cases depending on the dimension $\tilde{d}$ of $\tilde{A}$ given by Lemma \ref{lem:reduce}. In the case $\tilde{d}\ne d$, based on $\tilde{A}$, we show that we can deduce a non-averaging subset $A'$ of $\mathbb{Z}^{\tilde{d}}$ which is a subset of a box $B'$ with $|A'|\ge |B'|^{\alpha_{\tilde{d}} + \zeta + \eta(\zeta,d)}$ where $\eta(\zeta,d)>0$. In the case $\tilde{d}=d$, we will obtain a non-averaging subset $A'$ of a box $B'$ which satisfies $|A'| \ge |B'|^{\alpha_{\tilde{d}}+\zeta+\theta(\zeta,d,|A|)}$ where $\theta(\zeta,d,|A|)>0$. %

{\noindent \bf Case 1:} $\tilde{A}$ has dimension $\tilde d > d$. 

Let the bounding box $\tilde P = P(\tilde A)$, then 
\[
    |\tilde P| \ll |A|^{-(1-\epsilon)(\tilde d -d)} |B| ~\text{ and }~ |\tilde A| \gg |A|^{1-\epsilon},
\]
for some $\tilde d = O_d(1)$. 
Note that 
\begin{align*}
    |\tilde P|^{\alpha_{\tilde d} + \zeta} & \ll (|A|^{-(1-\epsilon)(\tilde d -d)}|B|)^{\alpha_{\tilde d} + \zeta} \\
    &\ll ( |A|^{-(1-\epsilon)(\tilde d -d)} |A|^{\frac{1}{\alpha_d+\zeta}})^{\alpha_{\tilde d} + \zeta},
\end{align*}
Hence, by Observation \ref{obs:numerics},
\begin{equation}\label{eq:incr-1}
    |\tilde A| \gg |A|^{1-\epsilon} \gg |\tilde P|^{(\alpha_{\tilde{d}}+\zeta)(1-\epsilon)/(1-c(\zeta))} \gg |\tilde{P}|^{\alpha_{\tilde{d}} + \zeta + c(\zeta/2)}.
\end{equation}
Here, we assume that $\epsilon$ is sufficiently small in $c(\zeta)$. Thus, in this case, we can replace $A \subset B \subset \Z^d$ and $\zeta>0$ with $\phi_{\tilde P}(\hat A) \subset \phi_{\tilde P}(\tilde P) \subset \Z^{\tilde d}$ and $\tilde \zeta = \zeta+c(\zeta)$.

{\noindent \bf Case 2:} $\tilde{A}$ has dimension $\tilde d < d$. 

Then for $\tilde A$ and $\tilde P = P(\tilde{A})$ we have
\[
    |\tilde P| \ll |B| ~\text{ and }~ |\tilde A| \gg |A|^{1-\epsilon},
\]
and so 
\begin{align}\label{eq:incr-2}
|\tilde A| \gg |A|^{1-\epsilon} \gg |B|^{(1-\epsilon) (\alpha_d +\zeta)} \gg |\tilde P|^{\alpha_{\tilde d} +\zeta + (\alpha_d-\alpha_{\tilde d}-\epsilon)}
\end{align}
so we can replace $A \subset B \subset \Z^d$ and $\zeta >0$ with $\phi_{\tilde P}(\hat A) \subset \phi_{\tilde P}(\tilde P) \subset \Z^{\tilde d}$ and $\tilde \zeta = \zeta + (\alpha_d-\alpha_{\tilde d}-\epsilon)$. Here, we assume that $\epsilon$ is chosen so that $\epsilon < \alpha_d - \alpha_{\tilde{d}}$ and hence $\tilde{\zeta} \ge \zeta + \theta(d)$ for some $\theta(d)>0$ (here we note that $\tilde{d}$ is bounded above as a function of $d$).  

{\noindent \bf Case 3:} $\tilde{A}$ has dimension $\tilde d = d$.

In this case, for $\tilde A$ and $\tilde P = P(\tilde{A})$ we have 
\[
    |\tilde P| \ll \rho^K |B| ~\text{ and }~ |\tilde A| = \rho |A| \gg |A|^{1-\epsilon},
\]
We also have that $\tilde A$ is $(\delta,\gamma)$-irreducible. Let $\hat A \subset \tilde A$ be the subset given by Theorem \ref{thm:struc}. Let $\overline{B} = \phi_{\tilde P}(\tilde P)$ and $\overline{A} = \phi_{\tilde P}(\hat A)$. We have $|\overline{B}| \ll \rho^K |B|$ and $|\overline{A}| \gg \rho |A|$.
If $\overline{A} \subset \overline{B}$ is not in $\mu$-convex position, we obtain a contradiction with Theorem \ref{thm:int-sums} as $\tilde A$ is non-averaging. Thus, $\overline{A}$ is in $\mu$-convex position. By Lemma \ref{lem:convex1}, there is some $\eta \in [\mu, \mu^\tau]$ for $\tau = \tau(d, \epsilon)>0$ and a convex subset $\Omega' \subset \operatorname{conv}(\overline{B})$ with $\mathrm{Vol}(\Omega') \le \eta |\overline{B}|$ and $|\Omega'\cap \overline{A}| \ge \eta^{\frac{d-1}{d+1}+\epsilon}|\overline{A}|$. By Lemma \ref{lem:john}, there is a box $\tilde{B}$ in $\mathbb{Z}^{d}$ such that $|\tilde{B} \cap \overline{A}| \ge \eta^{\frac{d-1}{d+1}+\epsilon}|\overline{A}|$ and $|\tilde{B}| \ll \eta |\overline{B}|$. Note that 
\begin{align}\label{eq:up-tildeB}
    |\tilde{B}|^{\alpha_{d} + \zeta} &\le (\eta |\overline{ B}|)^{\alpha_{d} + \zeta} \ll \eta^{\alpha_d+\zeta} \rho^{K/4}|B|^{\alpha_{d} + \zeta} \ll \eta^{\alpha_d+\zeta} \rho^{K/4} |A|,
\end{align}
while 
\[
    |\tilde{B} \cap \overline{ A}| \ge \eta^{\frac{d-1}{d+1}+\epsilon} \rho |A|.
\]
We have $\alpha_d + \zeta - \frac{d-1}{d+1}-\epsilon \ge \zeta/2 > 0$, and we can take $K \ge 100$, hence
\begin{equation}\label{eq:tildeB-barA}
    |\tilde{B} \cap \overline{A}| > \eta^{-\zeta/2} \rho^{-K/10} |\tilde{B}|^{\alpha_{\tilde d}+\zeta}.
\end{equation}
Thus we can replace $A\subset B\subset \Z^d$ and $\zeta >0$ with $\tilde{B}\cap \overline{A} \subset \tilde{B}$ where $|\tilde{B}\cap \overline{A}| \ge |\tilde{B}|^{\tilde{\zeta}}$ and $\tilde{\zeta} \ge \zeta + \theta'(\zeta,d,|A|)$ for $\theta'(\zeta,d,|A|)>0$. 

$\qquad$

In each of the three cases, we replace $A$ by a set $A'$ inside a box $B' \subseteq \mathbb{Z}^{\tilde{d}}$ such that $|A'| \ge |B'|^{\alpha_{\tilde{d}} + \zeta + \theta(\zeta,d)}$ when $\tilde{d}\ne d$ and $|A'| \ge |B'|^{\alpha_{\tilde{d}} + \zeta + \theta'(\zeta,d,|A|)}$ when $\tilde{d}=d$ and iterate. Let $A_j\subset B_j \subset \Z^{d_j}$ and $\zeta_j$ be the sets and parameter after the $j$-th iteration. We stop the iteration as soon as the dimension $\tilde{d}$ exceeds $D$, or $|A_j| < |A|^{1-c}$ for an absolute constant $c>0$. We next show that for a suitable choice of $D$, we can guarantee that the iteration never stops due to $\tilde{d}\ge D$ or $|A_j| < |A|^{1-c}$. 
For $\epsilon$ sufficiently small in $\zeta$, $D$ and $d$, we can guarantee that in all cases above $\zeta_j$ can only increase along the iteration. Furthermore, note that $\inf_{\zeta' \in [\zeta,0.9]} c(\zeta) = c'(\zeta) > 0$, and once the value of $\zeta_j$ exceeds $0.9$ we have $|A_j| \ge |B_j|^{1.1} > |B|$ which is a contradiction. This implies that the number of steps $j$ where $d_{j} > d_{j-1}$ is at most $c'(\zeta)^{-1} = O_\zeta(1)$. Since in each step where $d_{j}>d_{j-1}$ we have $d_j = O_{d_{j-1}}(1)$, we also obtain that $\max_j d_j = O_{d,\zeta}(1)$ along the iteration. Thus, by choosing $D$ sufficiently large depending only on $d$ and $\zeta$, we can guarantee that no $j$ along the iteration allows $\tilde{d}$ to exceed $D$. We also obtain that the number of iterations $j$ where $d_{j}<d_{j-1}$ is bounded by $O_{d,\zeta}(1)$. 


Finally, we verify that $|A_j|\ge |A|^{1-c}$ for all $j$, as such our choice of $\kappa$ can guarantee $\gamma \ge |A_j|^{-c}$ for an absolute constant $c>0$. Let $\rho_j = |A_{j+1}|/|A_j|$. We have $\rho_j \ge |A|^{-\epsilon}$ for all $j$. 
Since the number of steps $j$ where $d_{j+1}\ne d_{j}$ is $O_{\zeta, d}(1)$ as shown above, letting $\mathcal{J}$ denote the collection of such steps, we have 
\[
\prod_{j\in \mathcal{J}} \frac{|A_{j+1}|}{|A_j|} \ge |A|^{- O_{\zeta,d}(\epsilon)}
\]
which can be made larger than $|A|^{-c/2}$ if we take $\epsilon = \epsilon(\zeta, d)$ sufficiently small. Over steps $j$ where $d_{j+1}=d_j$, we have 
\[
|B_{j+1}| \ll \rho_j^{K} |B_j|.
\]
Thus, we have 
\[
    1 \le |B| \prod_{j \notin \mathcal{J}} \frac{|B_{j+1}|}{|B_j|} \le |A|^{C}  \prod_{j \notin \mathcal{J}} \max(1,C\rho_j^{K}),
\]
for a constant $C=C(D)$. Finally, we note that $\rho_j \le \mu^{c(d)}$ and hence for sufficiently large $|A|$, $C\rho_j^K \le \rho_j^{K/2}$. Hence,
\[
    \prod_{j\notin \mathcal{J}} \rho_j \ge |A|^{-2C/K}. 
\]
We then obtain that 
\[
    \prod_{j\notin \mathcal{J}} \rho_j \ge |A|^{-c/2}.
\]
As such, we obtain that $|A_j| \ge |A|^{1-c}$ for all $j$, proving the desired claim. This gives the desired contradiction. \qedhere

\end{proof}

\appendix

\section{Proof of Theorem \ref{thm:struc}} \label{section:appendix}

The main structural theorem of Conlon, Fox and the first author \cite{conlon2023homogeneous} is as follows. 
\begin{theorem}[\cite{conlon2023homogeneous}]\label{thm:sumset}
For any $\beta > 1$, and $0 < \eta < 1$, there are positive constants $c$ and $d$ such that the
following holds. Let $A$ be a subset of $[n]$ of size $m$ with $n \le m^\beta$ and let $s \in [m^\eta, cm/ \log m]$. Then there exists a subset $\hat A$ of $A$ of size at least $m-c^{-1} s \log m$, a proper GAP $P$ of dimension at most $d$ such that $\hat A \cup \{0\}$ is contained in $P$ and a subset $A'$ of $\hat A$ of size at most $s$ such that $\Sigma(A')$ contains a homogeneous translate of $csP$, where $csP$ is proper.
\end{theorem}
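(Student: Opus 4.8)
The plan is to deduce Theorem~\ref{thm:struc} from the one–dimensional Theorem~\ref{thm:sumset} by a Freiman–type embedding of the box $B\subseteq\mathbb{Z}^\ell$ into an interval: project $\mathbb{Z}^\ell$ linearly onto $\mathbb{Z}$ by a ``mixed-radix'' map, apply the one-dimensional theorem downstairs, and transport the structure it produces back up to $\mathbb{Z}^\ell$.

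\emph{Setting up the embedding and applying Theorem~\ref{thm:sumset}.} After a translation we may assume $A\subseteq\prod_{i=1}^{\ell}[0,w_i)$ with $\prod_i w_i=|B|\le m^\beta$; since $s\le cm/\log m\le m$, every subset sum of a subset of $A$ of size at most $s$ lies in the box $\mathcal B:=\prod_i[0,sw_i)$. Fix a large constant $C_0=C_0(\ell)$, set $M_1=1$ and $M_{i+1}=C_0 s w_i M_i$, and let $\psi(x)=\sum_{i=1}^{\ell}x_iM_i$. For $C_0$ large enough $\psi$ is injective on the enlarged box $C_0\cdot\mathcal B$ (no carrying occurs), so $\psi|_{C_0\cdot\mathcal B}$ is a Freiman isomorphism of order $3$ onto its image; in particular $A^*:=\psi(A)$ has size $m$, and $A^*\subseteq[0,N)$ with $N:=1+\max A^*\le \ell(C_0 s)^{\ell-1}|B|\le m^{\beta^*}$ for a constant $\beta^*=\beta^*(\ell,\beta)$ (using $\max_i(w_iM_i)=w_\ell M_\ell\le(C_0s)^{\ell-1}|B|$). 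Taking the constant $c$ in Theorem~\ref{thm:struc} to be at most the constant furnished by Theorem~\ref{thm:sumset} for parameters $\beta^*,\eta$ (and $d$ to be its dimension constant), we apply Theorem~\ref{thm:sumset} to $A^*$ with the same value of $s$: we obtain $\hat A^*\subseteq A^*$ with $|\hat A^*|\ge m-c^{-1}s\log m$, a proper homogeneous GAP $P^*$ of dimension $d^*\le d$ with $\hat A^*\cup\{0\}\subseteq P^*$, and $A'^*\subseteq\hat A^*$ of size $\le s$ with $g^*+csP^*\subseteq\Sigma(A'^*)$, where $g^*$ is a homogeneous translate and $csP^*$ is proper.

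\emph{Transporting back.} Since $0\in P^*$ and $P^*$ is homogeneous we may recentre so that the coefficient box of $P^*$ contains $0$; then $P^*\subseteq csP^*$. Because $\Sigma(A'^*)=\psi(\Sigma(A'))$ with $\Sigma(A')\subseteq\mathcal B$, the translate $g^*+csP^*$ lies in $\psi(\mathcal B)$, and $P^*\subseteq csP^*$ then forces $P^*\subseteq\psi(\mathcal B)-g^*$, while $\hat A^*\cup\{0\}\subseteq\psi(\prod_i[0,w_i))\subseteq\psi(\mathcal B)$ directly. Let $\delta$ denote the partial inverse of $\psi$ on $\psi(C_0\cdot\mathcal B)$; it is a Freiman isomorphism of order $2$ there, and such maps carry proper GAPs to proper GAPs of the same dimension and preserve homogeneity of translates — one checks this by reconstructing the image GAP from a corner $v_0$ via generators $q_j:=\delta(v_0+q_j^*)-\delta(v_0)$ and incrementing one coordinate at a time, the key point being that the ``increment by $q_j^*$'' computed by $\delta$ is independent of the base point (as long as the relevant chains stay in $\psi(C_0\cdot\mathcal B)$, which the recentring guarantees). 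Then $\hat A:=\delta(\hat A^*)$ and $A':=\delta(A'^*)$ have the required sizes, $\psi$ identifies $\hat A\cup\{0\}$ with $\hat A^*\cup\{0\}$, the set $P:=\delta(P^*)$ is a proper homogeneous GAP of dimension $d^*$ with $\hat A\cup\{0\}\subseteq P$, and $\delta(g^*+csP^*)\subseteq\delta(\psi(\Sigma(A')))=\Sigma(A')$ is a homogeneous translate of $csP$ (using that $\delta$ reconstructs $csP^*$ with the same generators $q_j$). Unwinding the identifications yields all the conclusions of Theorem~\ref{thm:struc}.

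\emph{The main obstacle.} The delicate point is exactly this transport. One must choose the radix so large — polynomial in $s$, hence in $m$, rather than a constant — that not only $A^*$ and all relevant subset sums, but also the GAP $P^*$ and the homogeneous translate $g^*+csP^*$ produced downstairs, lie inside the region on which $\psi$ is a genuine (order $\ge 2$) Freiman isomorphism rather than merely injective; this is precisely where the inclusions $P^*\subseteq csP^*$ and $g^*+csP^*\subseteq\Sigma(A'^*)$, together with the homogeneity of $P^*$, are used. One then has to verify that the corner/generator reconstruction of $\delta$ applied to $P^*$ and to $g^*+csP^*$ is consistent, so that a \emph{single} GAP $P$ simultaneously contains $\hat A\cup\{0\}$ and has $csP$ — after a homogeneous shift — inside $\Sigma(A')$, and that homogeneity of the translate is not destroyed. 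Modulo this bookkeeping the argument is a direct pullback. (Alternatively, as noted in the paper, one may simply rerun the proof of \cite{conlon2023homogeneous} verbatim with $[n]$ replaced by an axis-aligned box, since every step there makes sense in $\mathbb{Z}^\ell$.)
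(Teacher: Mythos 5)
Your proposal does not actually prove the stated theorem. The statement in question is Theorem \ref{thm:sumset} itself, the one-dimensional structural result for subsets of $[n]$ from \cite{conlon2023homogeneous}; your argument takes exactly this theorem as a black box and uses it to deduce the higher-dimensional Theorem \ref{thm:struc}. As a proof of Theorem \ref{thm:sumset} this is circular: the entire analytic content — producing the bounded-dimensional proper GAP $P$ containing $\hat A\cup\{0\}$ together with a subset $A'$ of size at most $s$ whose subset sums contain a homogeneous translate of the proper dilate $csP$ — is precisely what you assume. Note that the paper does not reprove this statement either; it imports it from \cite{conlon2023homogeneous} and its Appendix A only carries out the reduction of Theorem \ref{thm:struc} to Theorem \ref{thm:sumset}. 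So a genuine proof would require reconstructing the machinery of that paper (the inverse/structure theory for subset sums), none of which appears in your write-up.

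Even read charitably as a proof of Theorem \ref{thm:struc} (the task the paper's appendix performs), your transport step has a quantitative gap that you yourself flag as ``the main obstacle'' and then defer. The paper uses a single radix $H=n^{\kappa}$ with $\kappa=10\ell^3$ and needs $cH > n s^2 (sn)^\ell$: the point is that the generators $q_{0i}$ of $P_0$ pull back to vectors in $[-2sn,2sn]^\ell$ and the widths $b_i-a_i$ can be as large as $(sn)^\ell$, so reconstructing $csP$ upstairs without wraparound requires the radix to dominate $s\cdot(sn)^\ell\cdot sn$ up to constants. Your mixed radices $M_{i+1}=C_0 s w_i M_i$ only make $\psi$ faithful (carry-free) on $C_0\cdot\mathcal B$, which suffices for the subset sums of $A'$ but not for the integer combinations $\sum_i n_i q_i$ with $|n_i|$ up to $cs(b_i-a_i)$ that define $csP$; a Freiman isomorphism of order $2$ or $3$ on $C_0\cdot\mathcal B$ does not by itself carry $P^*$ and its $cs$-dilate back to a proper GAP with the same generators, homogeneity, and containments. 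Fixing this is exactly the bookkeeping the paper does with its much larger radix, so the step you label ``direct pullback modulo bookkeeping'' is the step that fails with your parameter choice.
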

Theorem \ref{thm:struc} follows directly in a verbatim manner from their proof. For convenience of the reader, we record here a different self-contained derivation of Theorem \ref{thm:struc}, which applies to subsets of $\mathbb{Z}^{\ell}$, assuming Theorem \ref{thm:sumset}.

Fix parameters $\ell$, $\beta > 1$, and $0 < \eta < 1$ and let $A \subset B \subseteq \mathbb{Z}^\ell$ be a set of size $m$ with $|B| \le m^\beta$ and let $s \in [m^\eta, cm/ \log m]$. Without loss of generality, we can assume that $B = [n]^\ell$ upon changing $\beta$ to $\beta \ell$ (by taking $n$ to be the maximum sidelength of $B$, noting that $n^\ell \le |B|^\ell \le m^{\beta \ell}$). 

Let $\kappa >1$ and denote $H = n^{\kappa}$, $n_0 = H^\ell$. Consider the injective map $\phi:(-H/2,H/2]^\ell \rightarrow \mathbb{Z}$ given by
\[
\phi(a_1, \ldots, a_\ell) = \sum_{j=1}^\ell H^{j-1} a_j.
\]
Note that $\phi([n]^\ell) \subseteq [n_0]$. 
Let $A_0 = \phi(A)$, $\beta_0 = \beta \kappa \ell$ and note $m^{\beta_0} \ge n_0$. By Theorem \ref{thm:sumset} we get a GAP $P_0 \subset \Z$ of dimension $d\le d_0 = d(\beta_0, \eta)$ and a subset $\hat A_0 \subset A_0$ of size at least $m - c^{-1} s \log m$ and a subset $A'_0 \subset \hat A_0$ of size at most $s$. We have $\hat A_0 \cup\{0\} \subset P_0$ and $x_0+csP_0 \subset \Sigma(A'_0)$ for a homogeneous translate $x_0+csP_0$ and $csP_0$ is proper.

Let $\hat A$ and $A'$ be the preimages of $\hat A_0$ and $A'_0$ under $\phi$, respectively. Let $Q = \phi([-n, n]^\ell)$, this is an $\ell$-dimensional homogeneous GAP and $(\frac{cH}{n}) Q$ is proper. Note
\[
x_0+csP_0 \subset \Sigma(A_0') \subset sQ \cap sP_0.
\]
So $x_0 \in sQ$ and $csP_0 \subset 2s Q$. Write $P_0 = \{\sum_{i=1}^d n_i q_{0i}:~ a_i \le n_i \le b_i\}$ where $a_i \le 0 \le b_i$ (see \cite[Claim 2.6]{conlon2023homogeneous}). It follows that $q_{0i} \in 2s Q$, $i=1, \ldots, d$. Let $q_i = \phi^{-1}(q_{0i}) \in [-2sn, 2sn]^\ell$. We have
\[
\operatorname{Vol}(P_0) \le |\Sigma(A'_0)| \le (sn)^\ell,
\]
so $b_i-a_i \le (sn)^\ell$ for all $i=1, \ldots, d$.
If $cH > ns^2 (sn)^\ell$, %
we get that 
\[
P = \left\{ \sum_{i=1}^d n_i q_i:~ a_i \le n_i \le b_i\right\}
\]
is a homogeneous $d$-dimensional GAP such that $\phi(P) = P_0$ and $csP$ is proper and contained in $(\frac{cH}{n}) Q$. It follows that $\hat A \cup \{0\} \subset P$ and $x+csP \subset \Sigma(A')$ where $x = \phi^{-1}(x_0) \in [-s,s]^\ell$. Taking $\kappa = 10\ell^3$ ensures the desired lower bound on $H$ and completes the proof of Theorem \ref{thm:struc}.

\end{document}